\begin{document}
\newcommand{\qed}{\hphantom{.}\hfill $\Box$\medbreak}
\newcommand{\proof}{\noindent{\bf Proof \ }}

\newtheorem{theorem}{Theorem}[section]
\newtheorem{lemma}[theorem]{Lemma}
\newtheorem{corollary}[theorem]{Corollary}
\newtheorem{remark}[theorem]{Remark}
\newtheorem{example}[theorem]{Example}
\newtheorem{definition}[theorem]{Definition}
\newtheorem{construction}[theorem]{Construction}
\newtheorem{fact}[theorem]{Fact}
\newtheorem{proposition}[theorem]{Proposition}

\begin{center}
{\Large\bf Constructions for optimal Ferrers diagram rank-metric codes\footnote{Supported by NSFC under Grant $11431003$, $11471032$, and Fundamental Research Funds for the Central Universities under Grant $2016$JBM$071$, $2016$JBZ$012$}
}

\vskip12pt

Shuangqing Liu, Yanxun Chang, and Tao Feng\\[2ex] {\footnotesize Department of Mathematics, Beijing Jiaotong University, Beijing 100044, P. R. China}\\
{\footnotesize
16118420@bjtu.edu.cn, yxchang@bjtu.edu.cn, tfeng@bjtu.edu.cn}
\vskip12pt

\end{center}

\vskip12pt

\noindent {\bf Abstract:} Optimal rank-metric codes in Ferrers diagrams can be used to construct good subspace codes. Such codes consist of matrices having zeros at certain fixed positions. This paper generalizes the known constructions for Ferrers diagram rank-metric (FDRM) codes. Via a criterion for linear maximum rank distance (MRD) codes, an explicit construction for a class of systematic MRD codes is presented, which is used to produce new optimal FDRM codes. By exploring subcodes of Gabidulin codes, if each of the rightmost $\delta-1$ columns in Ferrers diagram $\cal F$ has at least $n-r$ dots, where $r$ is taken in a range, then the conditions that an FDRM code in $\cal F$ is optimal are established. The known combining constructions for FDRM code are generalized by introducing the concept of proper combinations of Ferrers diagrams.

\vskip12pt

\noindent {\bf Keywords}: Ferrers diagram, rank-metric code, Gabidulin code, subspace code.


\section{Introduction}

Network coding, introduced in \cite{acly}, refers to coding at the intermediate nodes when information is multicasted in a network. Often information is modeled as vectors of fixed length over a finite field $\mathbb F_q$, called {\em packets}. To improve the performance of the communication, intermediate nodes should forward random linear $\mathbb F_q$-combinations of the packets they receive. Hence, the vector
space spanned by the packets injected at the source is globally preserved in the network when no error occurs.

This observation led K\"{o}tter and Kschischang \cite{kk} to model network codes as
projective space ${\cal P}_q(n)$, the set of all subspaces of $\mathbb F_q^n$, or
Grassmann space ${\cal G}_q(n,k)$, the set of all subspaces of $\mathbb F_q^n$ having dimension $k$. Subsets of ${\cal P}_q(n)$ are called {\em subspace codes} or {\em projective codes}, while subsets of the Grassmann space are referred to as {\em constant-dimension codes} or {\em Grassmann codes}. The subspace distance $d_S(U,V)={\rm dim}U+{\rm dim}V-2{\rm dim}(U\cap V)$ for all $U,V\in {\cal P}_q(n)$ is used as a distance measure for subspace codes. For more information on constructions and bounds for subspace codes, the interested reader may refer to \cite{es09,es13,ev11,gy,kku,se13,se131,st15,s,tmbr}.

Silva, Kschischang and K\"{o}tter \cite{skk} pointed out that lifted maximum rank distance (MRD) codes can result in almost optimal constant dimension codes, which asymptotically attain the known upper bounds \cite{kk,ev11}, and can be decoded efficiently in the context of random linear network coding.

To obtain optimal constant dimension codes, Etzion and Silberstein \cite{es09} presented a simple but effective construction, named the multilevel construction, which generalizes the lifted MRD codes construction by introducing a new family of rank-metric codes, namely, Ferrers diagram rank-metric codes. Furthermore, Etzion, Gorla, Ravagnani and Wachter-Zeh \cite{egrw} systematically investigated Ferrers diagram rank-metric codes and established four constructions to obtain optimal codes.

This paper continues the work in \cite{egrw}. In Section 2, we give a brief introduction of Ferrers diagram rank-metric codes, and review most of known constructions in the literature.

Via a criterion for linear MRD codes presented in \cite{tm}, we give an explicit construction for a class of systematic MRD codes in Section 3.1, which can be used to produce optimal Ferrers diagram rank-metric codes (see Construction \ref{con:from sys MRD}). In Section 3.2, we generalize Construction 2 in \cite{egrw} by exploring subcodes of Gabidulin codes. Construction 2 in \cite{egrw} requires that each of the rightmost $\delta-1$ columns in Ferrers diagram $\cal F$ has at least $n-1$ dots. We relax the condition $n-1$ to $n-r$, where $r$ is taken in a range (see Theorem \ref{thm:subcodes from Gab}).

In Section 4, by introducing the concept of proper combinations of Ferrers diagrams, we generalize Theorem 9 in \cite{egrw}. Our constructions are essentially to combine small Ferrers diagram rank-metric codes to a bigger one more flexibly (see Constructions \ref{con:com-1}, \ref{con:com-2} and \ref{con:com-3}).

\section{Preliminaries}

Let $q$ be a prime power, $\mathbb F_q$ be the finite field of order $q$, and $\mathbb F_{q^m}$ be its extension field of order $q^m$. We use $\mathbb F^{m \times n}_q$ to denote the set of all $m \times n$ matrices over $\mathbb F_q$, and $\mathbb F^n_{q^m}$ to denote the set of all row vectors of length $n$ over $\mathbb F_{q^m}$. The rank of a matrix $\textbf{A} \in \mathbb F^{m \times n}_q$ is denoted by rank$(\textbf{A})$. The rows and columns of an $m\times n$ matrix will be indexed by $0,1,\ldots, m-1$ and $0,1,\ldots, n-1$, respectively. Let $[n]$ denote $\{0,1,\ldots,n-1\}$ and $(i,j)$ denote the cell in the $i$-th row and the $j$-th column of an $m\times n$ matrix, where $i\in [n]$ and $j\in [m]$. Write $\textbf{I}_{s}$ as the $s\times s$ identity matrix.

\subsection{Rank-metric codes}

The set $\mathbb F^{m \times n}_q$ is an $\mathbb F_q$-vector space. The {\em rank distance} on $\mathbb F^{m \times n}_q$ is defined by
\begin{center}
  $d_R(\textbf{A}, \textbf{B})={\rm rank}(\textbf{A}-\textbf{B})~{\rm for}~\textbf{A}, \textbf{B} \in \mathbb F^{m \times n}_q$.
\end{center}
An $[m \times n, k, \delta]_q$ {\em rank-metric code} $\mathcal C$ is a $k$-dimensional $\mathbb F_q$-linear subspace of $\mathbb F^{m \times n}_q$ with {\em minimum rank distance} $$\delta=\underset{\textbf{A,B} \in \mathcal C, \textbf{A}\neq \textbf{B}}{\min}\{d_R(\textbf{A},\textbf{B})\}.$$
Clearly $$\delta=\underset{\textbf{A} \in \mathcal C, \textbf{A}\neq \textbf{0}}{\min}\{{\rm rank}(\textbf{A})\}.$$
The Singleton-like upper bound for rank-metric codes implies that $$k \leq {\rm max}\{m,n\}({\rm min}\{m,n\}- \delta +1)$$
holds for any $[m \times n, k, \delta]_q$ code. When the equality holds, $\cal C$ is called a {\em linear maximum rank distance code}, denoted by an MRD$[m \times n, \delta]_q$ code. Linear MRD codes exists for all feasible parameters (cf. \cite{d,g,r}).

\subsection{Ferrers diagram rank-metric codes}

Given positive integers $m$ and $n$, an $m \times n$ Ferrers diagram $\mathcal F$ is an $m \times n$ array of dots and empty cells such that all dots are shifted to the right of the diagram, the number of dots in each row is less than or equal to the number of dots in the previous row, and the first row has $n$ dots and the rightmost column has $m$ dots. The number of dots in $\mathcal F$ is denoted by $|\mathcal F|$.

\begin{example}\label{1}
~
\begin{center}
 $\mathcal{F}=\begin{array}{cccc}
                \bullet & \bullet & \bullet & \bullet \\
                \bullet & \bullet & \bullet & \bullet \\
                 & \bullet & \bullet & \bullet \\
                 &  & \bullet & \bullet \\
                 &  &  & \bullet
              \end{array}
 $
\end{center}
is a $5\times 4$ Ferrers diagram and $|\mathcal F|=14$.
\end{example}

Sometimes it is convenient to state Ferrers diagrams by using the set-theoretical language (cf. \cite{b,gr}).
Given positive integers $m$ and $n$, an $m \times n$ Ferrers diagram $\mathcal F$ is a subset of $[m]\times [n]$ satisfying that (1) if $(i, j) \in \mathcal F$ and $i\geq 1$, then $(i-1, j) \in \mathcal F$; (2) if $(i, j) \in \mathcal F$ and $j\leq n-2$, then $(i, j+1) \in \mathcal F$. In the sequel, these two definitions will be both used, depending on what is more convenient in the context.

Motivated by the multilevel construction from \cite{es09}, some research work has been done on constructing good or even optimal rank-metric codes in Ferrers diagrams \cite{egrw,gr,st15,tr}. For a given $m \times n$ Ferrers diagram $\mathcal F$, an $[\mathcal F, k, \delta]_q$ {\em Ferrers diagram rank-metric} (FDRM) {\em code}, briefly an $[\mathcal F, k, \delta]_q$ code, is an $[m \times n, k, \delta]_q$ rank-metric code in which for each $m \times n$ matrix, all entries not in $\mathcal F$ are zero. If $\mathcal F$ is a {\em full} $m\times n$ diagram with $mn$ dots, then its corresponding FDRM code is just a classical rank-metric code.

Etzion and Silberstein \cite{es09} established a Singleton-like upper bound on FDRM codes.

\begin{lemma} {\rm (Theorem 1 in  \cite{es09})} \label{lem:upper bound}
Let $\delta$ be a positive integer. Let $\cal F$ be a Ferrers diagram and ${\cal C}_{\cal F}$ be any $[\mathcal F, k, \delta]_q$ code. Then $k\leq \min_{i\in[\delta]}v_i$, where $v_i$ is the number of dots in $\cal F$ which are not contained in the first $i$ rows and the rightmost $\delta-1-i$ columns.
\end{lemma}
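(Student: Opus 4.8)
The plan is to prove, for each fixed $i\in[\delta]$, the single inequality $k\le v_i$, and then take the minimum over $i$. The full statement thus reduces to $\delta$ separate coordinate-projection arguments, each of the classical Singleton type adapted to the rank metric. First I would fix $i\in[\delta]$ and introduce the $\mathbb F_q$-linear restriction map $\pi_i$ that sends a codeword $M\in{\cal C}_{\cal F}$ to the array obtained by deleting the first $i$ rows and the rightmost $\delta-1-i$ columns of $M$. The image lies in the space of arrays supported on exactly those dots of $\cal F$ that survive this deletion, an $\mathbb F_q$-space of dimension $v_i$ by the very definition of $v_i$. If I can show that $\pi_i$ is injective, then $k=\dim_{\mathbb F_q}{\cal C}_{\cal F}\le v_i$ follows immediately, and minimizing over $i\in[\delta]$ finishes the proof.

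For injectivity, suppose $M\in{\cal C}_{\cal F}$ lies in $\ker\pi_i$. Then every nonzero entry of $M$ lies either in one of the first $i$ rows or in one of the rightmost $\delta-1-i$ columns. Writing $M=M_{\rm row}+M_{\rm col}$, where $M_{\rm row}$ collects the entries in the first $i$ rows and $M_{\rm col}$ the remaining (necessarily column-supported) entries, gives ${\rm rank}(M)\le{\rm rank}(M_{\rm row})+{\rm rank}(M_{\rm col})\le i+(\delta-1-i)=\delta-1$. Since ${\cal C}_{\cal F}$ has minimum rank distance $\delta$, every nonzero codeword has rank at least $\delta$; hence $M=\textbf{0}$ and $\pi_i$ is injective.

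I expect the only real content to be the rank bound used above: a matrix whose nonzero entries are confined to a set of $a$ prescribed rows together with a set of $b$ prescribed columns has rank at most $a+b$. This follows from subadditivity of rank together with the elementary facts that a matrix supported on $a$ rows has rank at most $a$ and one supported on $b$ columns has rank at most $b$; one only has to verify that the decomposition $M=M_{\rm row}+M_{\rm col}$ is legitimate, i.e.\ that after removing the first $i$ rows every remaining nonzero entry of an element of $\ker\pi_i$ sits inside the rightmost $\delta-1-i$ columns, which is exactly the defining property of $\ker\pi_i$. No property of Ferrers diagrams beyond the bookkeeping of which dots survive the deletion is needed, so the Ferrers structure enters only through the count $v_i$.
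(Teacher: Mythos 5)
Your argument is correct and is essentially the standard proof of this bound: the paper itself does not reproduce a proof but cites Theorem 1 of \cite{es09}, whose argument is exactly your projection--injectivity scheme (a nonzero codeword vanishing off the first $i$ rows and rightmost $\delta-1-i$ columns would split as a sum of a matrix of rank at most $i$ and one of rank at most $\delta-1-i$, contradicting minimum rank distance $\delta$). Nothing is missing; the decomposition and the dimension count over the surviving dots are both legitimate.
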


An FDRM code which attains the upper bound in Lemma \ref{lem:upper bound} is called {\em optimal}. An MRD$[m \times n, \delta]_q$ code with $m\geq n$ is an optimal $[\mathcal F, m(n-\delta+1), \delta]_q$ code, where $\cal F$ is a full $m\times n$ diagram. So far all known FDRM codes over $\mathbb F_q$ with the largest possible dimension are optimal.

We remark that the upper bound still holds for FDRM codes defined on any field, and especially, for algebraically closed fields the bound cannot be attained (see Theorem 13 and Proposition 17 in \cite{gr}). This paper focuses only on finite fields since they are used for forming subspace codes.

For a Ferrers diagram $\mathcal F$ of size $m\times n$, one can transpose it to obtain a Ferrers diagram ${\cal F}^{t}$ of size $n\times m$. Thus if there exists an $[\mathcal F, k, \delta]_q$ code, then so does an $[{\mathcal F}^t, k, \delta]_q$ code. Without loss of generality, we always assume that $m\geq n$.

We denote by $\gamma_i$, $i\in [n]$, the number of dots in the $i$-th column of $\mathcal F$, and by $\rho_i$, $i \in [m]$, the number of dots in the $i$-th row of $\mathcal F$.

\subsection{Known constructions for FDRM codes}

This section summarizes known main constructions for FDRM codes, which come from \cite{al,egrw,es09,gr,zg}. We shall use or generalize them later.

\subsubsection{Exploration of subcodes of MRD codes}

Etzion and Silberstein \cite{es09} introduced the concept of FDRM codes. They established the existence of optimal $[\mathcal F, k, \delta]_q$ codes whenever $\cal F$ is an $m\times n$ ($m\geq n$) Ferrers diagram and each of its rightmost $\delta-1$ columns has at least $m$ dots. The proof is based on the use of $q$-cyclic MRD codes. A better result is provided in \cite{egrw} with a simple proof by means of shortening systematic MRD codes (see also Theorem 23 in \cite{gr} and Corollary 3.3 in \cite{al}).

\begin{theorem} {\rm (Theorem 3 in \cite{egrw})} \label{thm:shortening}
Assume $\mathcal F$ is an $m \times n$ Ferrers diagram and each of the rightmost $\delta-1$ columns of $\mathcal F$ has at least $n$ dots. Then there exists an optimal $[\mathcal F, k, \delta]_q$ code for any prime power $q$, where $k=\sum_{i=0}^{n-\delta} \gamma_i$.
\end{theorem}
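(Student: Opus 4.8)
The plan is to show that the Singleton-like bound of Lemma \ref{lem:upper bound} already equals $k=\sum_{i=0}^{n-\delta}\gamma_i$ under the tallness hypothesis, so that it suffices to exhibit a single $[\mathcal F,k,\delta]_q$ code. With $v_i$ as in Lemma \ref{lem:upper bound}, I first note that $v_0$ counts exactly the dots of the leftmost $n-\delta+1$ columns, so $v_0=k$. To see that $v_0=\min_{i\in[\delta]}v_i$, I would compare $v_i$ with $v_0$ cell by cell: passing from $v_0$ to $v_i$ deletes the top $i$ rows of the leftmost $n-\delta+1$ columns (at most $i(n-\delta+1)$ dots) and adjoins the columns $n-\delta+1,\dots,n-\delta+i$ restricted to rows $\ge i$. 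Since each of the latter lies among the rightmost $\delta-1$ columns and hence carries at least $n$ dots, it contributes at least $n-i$ dots below row $i$, giving
\[
v_i-v_0\ \ge\ i(n-i)-i(n-\delta+1)\ =\ i(\delta-1-i)\ \ge\ 0,\qquad 0\le i\le \delta-1.
\]
Thus the bound is $k$, and optimality will follow once a code of dimension $k$ and minimum rank distance $\delta$ supported on $\mathcal F$ is produced.

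For the construction I would start from a Gabidulin code, the standard source of linear MRD codes. Fix an $\mathbb F_q$-basis $\beta_0,\dots,\beta_{m-1}$ of $\mathbb F_{q^m}$, identify $\mathbb F_{q^m}^n$ with $\mathbb F_q^{m\times n}$ by expanding each coordinate in this basis, and let $\mathcal G$ be an MRD$[m\times n,\delta]_q$ Gabidulin code of $\mathbb F_{q^m}$-dimension $n-\delta+1$ with evaluation points $g_0,\dots,g_{n-1}$. Define $\mathcal C$ to be the subcode of $\mathcal G$ consisting of those codewords whose matrix is supported on $\mathcal F$. Being a subcode of an MRD code, $\mathcal C$ automatically has minimum rank distance at least $\delta$ and is supported on $\mathcal F$, so the entire problem reduces to arranging $\dim_{\mathbb F_q}\mathcal C=k$; the bound then forces the distance to be exactly $\delta$ and $\mathcal C$ to be optimal.

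The dimension count is the heart of the argument and where I expect the real difficulty to lie. Writing $E$ for the set of empty cells of $\mathcal F$ and $\mathrm{ev}_E:\mathcal G\to\mathbb F_q^{E}$ for the map reading off those entries, one has $\dim_{\mathbb F_q}\mathcal C=m(n-\delta+1)-\mathrm{rank}(\mathrm{ev}_E)$, and a direct count shows that the target $k$ is equivalent to $\mathrm{rank}(\mathrm{ev}_E)$ equalling the number $E_L$ of empty cells lying in the leftmost $n-\delta+1$ columns. This splits into two tasks: the leftmost constraints must be independent (the restriction of $\mathrm{ev}_E$ to $E_L$ should be surjective), and the constraints coming from the empty cells in the rightmost $\delta-1$ columns must be \emph{redundant}, i.e.\ automatically satisfied by any codeword that already vanishes on $E_L$. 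The redundancy is exactly where the hypothesis ``at least $n$ dots'' is used: each rightmost column need only keep its support inside a subspace of dimension $\ge n$, and the freedom in choosing the evaluation points $g_i$ (and the basis) can be exploited to force the completed columns into such subspaces. I would establish the existence of a choice making both tasks succeed over every $\mathbb F_q$ by a dimension-counting argument on intersections of the relevant subspaces, the slack being provided precisely by the inequality $\gamma_j\ge n$ for the rightmost columns. Controlling this redundancy uniformly in $q$ --- rather than only for large $q$ --- is the step I expect to be the main obstacle.
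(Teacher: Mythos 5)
Your computation of the Singleton-like bound is correct: $v_0=\sum_{i=0}^{n-\delta}\gamma_i$, and the estimate $v_i\ge v_0+i(\delta-1-i)\ge 0$ for $i\in[\delta]$ is exactly what the hypothesis on the rightmost $\delta-1$ columns delivers, so the bound equals $k$. The gap is in the construction. You define $\mathcal C$ as the subcode of a full MRD$[m\times n,\delta]_q$ Gabidulin code consisting of the codewords supported on $\mathcal F$, and you then need every vanishing constraint coming from an empty cell in the rightmost $\delta-1$ columns to be a consequence of the constraints coming from the leftmost $n-\delta+1$ columns. You correctly identify this redundancy as the crux, but you do not prove it; you only express the hope that some choice of basis and evaluation points forces it, and you yourself flag the uniformity in $q$ as an unresolved obstacle. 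For an ambient code with $m$ rows this redundancy is not available in general: in systematic form a parity coordinate $c_j=\sum_i u_iA_{i,j}$ ranges over all of $\mathbb F_{q^m}$ as the (already restricted) information vector varies, so confining column $j$ to its $\gamma_j<m$ dot positions imposes genuinely new linear conditions, and no argument is given that a choice of $g_0,\dots,g_{n-1}$ killing them exists for every prime power $q$. The proof is therefore incomplete at its decisive step.

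The intended argument (this is the content of Lemma \ref{lem:subcode from MRD}, applied with a suitably chosen number of rows) sidesteps the redundancy question entirely by shrinking the ambient code rather than engineering dependent constraints. For $\delta\ge 2$ set $\ell=\gamma_{n-\delta+1}$, so that $n\le\ell\le\gamma_j$ for every $j\ge n-\delta+1$ by hypothesis and monotonicity, and also $\gamma_i\le\ell$ for every $i\le n-\delta$. Take a systematic MRD$[\ell\times n,\delta]_q$ code with generator matrix $(\textbf{I}_k\,|\,\textbf{A})$ over $\mathbb F_{q^{\ell}}$, which exists for every prime power $q$ precisely because $\ell\ge n$, and restrict the $i$-th information symbol $u_i$ to the $\gamma_i$-dimensional $\mathbb F_q$-subspace spanned by the first $\gamma_i$ basis elements, $i\in[k]$. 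The information columns then fit $\mathcal F$ by construction, while each parity column, being an arbitrary element of $\mathbb F_{q^{\ell}}$, occupies only the top $\ell\le\gamma_j$ rows and fits automatically; padding with $m-\ell$ zero rows preserves the rank and yields an $[\mathcal F,\sum_{i=0}^{n-\delta}\gamma_i,\delta]_q$ code with no dimension count left to verify. This is exactly where the hypothesis ``at least $n$ dots'' is used, and it is the step your proposal is missing.
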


As a straightforward corollary, Etzion and Silberstein pointed out the following fact.

\begin{corollary} {\rm \cite{es09}} \label{cor:delta=1,2}
Let $\delta\in\{1,2\}$. There exists an optimal $[\mathcal F, k, \delta]_q$ code for any Ferrers diagram $\cal F$ and any prime power $q$, where $k=\sum_{i=0}^{n-\delta} \gamma_i$.
\end{corollary}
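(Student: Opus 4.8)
The plan is to obtain both values of $\delta$ as immediate instances of Theorem~\ref{thm:shortening}, by verifying in each case that the rightmost $\delta-1$ columns of $\mathcal F$ satisfy the required dot-count hypothesis. Since Theorem~\ref{thm:shortening} already produces an optimal $[\mathcal F,k,\delta]_q$ code with exactly $k=\sum_{i=0}^{n-\delta}\gamma_i$ whenever its hypothesis holds, the entire task reduces to checking that hypothesis; no separate code construction or optimality argument is needed.

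For $\delta=1$, the relevant count is over the rightmost $\delta-1=0$ columns, so the condition ``each of the rightmost $0$ columns of $\mathcal F$ has at least $n$ dots'' is vacuously satisfied for every Ferrers diagram $\mathcal F$ and every $q$. Applying Theorem~\ref{thm:shortening} then yields an optimal code with $k=\sum_{i=0}^{n-1}\gamma_i=|\mathcal F|$; intuitively this is just the full space of matrices supported on $\mathcal F$, which has minimum rank distance at least $1$.

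For $\delta=2$, one must check the single rightmost column, i.e. column $n-1$. Here I would invoke the defining property of an $m\times n$ Ferrers diagram, namely that its rightmost column contains exactly $m$ dots, so $\gamma_{n-1}=m$. Combined with the standing normalization $m\geq n$, this gives $\gamma_{n-1}=m\geq n$, so the rightmost $\delta-1=1$ column has at least $n$ dots. Theorem~\ref{thm:shortening} again applies and delivers an optimal $[\mathcal F,k,2]_q$ code with $k=\sum_{i=0}^{n-2}\gamma_i$.

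There is essentially no obstacle in this argument: the only substantive inputs are the structural fact $\gamma_{n-1}=m$ from the definition of a Ferrers diagram and the convention $m\geq n$, both of which are recorded earlier in the excerpt. The one point worth stating carefully is the vacuity of the hypothesis when $\delta=1$, since otherwise a reader might worry that ``the rightmost $0$ columns'' is ill-defined rather than trivially satisfied.
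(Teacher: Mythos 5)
Your proposal is correct and is exactly the route the paper intends: the corollary is stated immediately after Theorem~\ref{thm:shortening} with no further argument, precisely because the hypothesis on the rightmost $\delta-1$ columns is vacuous for $\delta=1$ and follows from $\gamma_{n-1}=m\geq n$ for $\delta=2$ under the standing normalization $m\geq n$. Your explicit verification of these two cases fills in the same (omitted) details, so there is nothing to add.
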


To relax the restriction on $\cal F$ in Theorem \ref{thm:shortening}, the idea of exploring subcodes of MRD codes was introduced to construct FDRM codes in \cite{egrw}, and developed in \cite{al,zg} recently.

\begin{theorem}{\rm (Theorem 8 in \cite{egrw})} \label{thm:from subcodes}
Assume $\mathcal F$ is an $m \times n$ Ferrers diagram and $m\geq n$. Let $2\leq \delta \leq n-1$. If each of the rightmost $\delta-1$ columns in $\cal F$ has at least $n-1$ dots, then there exists an $[\mathcal F, k, \delta]_q$ code for any prime power $q$, where $k=\min\{m-n+1, \gamma_0\} + \sum_{i=1}^{n-\delta} \gamma_i$. When $\gamma_0\leq m-n+1$, the resulting FDRM code is optimal.
\end{theorem}

We shall generalize Theorem \ref{thm:from subcodes} to Theorem \ref{thm:subcodes from Gab}, where it is required that each of the rightmost $\delta-1$ columns in $\cal F$ has at least $n-r$ dots for any positive integer $r$ satisfying $r+1\leq \delta\leq n-r$.

\begin{theorem}{\rm (Theorem 3.6 in \cite{al})} \label{thm:from Max}
Assume $\mathcal F$ is an $m\times n$ Ferrers diagram and $m\geq n$. Let $2\leq \delta\leq n$ and $l=n-\delta+1$. Set $\varepsilon=\sum_{t=l}^{n-1}(m-\gamma_t)$, that is, $\varepsilon$ is the number of dots missing in the rightmost $\delta-1$ columns of $\mathcal F$. If $\gamma_s\leq \gamma_l-\varepsilon(l-s)$ for every $s\in\{0,1,\ldots,l-1\}$, then there exists an optimal $[\mathcal F,\sum_{i=0}^{l-1}\gamma_i,\delta]_q$ code.
\end{theorem}

Theorem \ref{thm:from Max} implies Theorem \ref{thm:shortening} when $\varepsilon=0$. When $\varepsilon\neq 0$, the condition $\gamma_s\leq \gamma_l-\varepsilon(l-s)$ for $s\in\{0,1,\ldots,l-1\}$ means that the numbers of dots in the first $l$ columns are restricted in an arithmetic progression with step size $\varepsilon$.

\begin{theorem}{\rm (Theorem 3.6 in \cite{zg})} \label{the:from sys MRD-new1}
Let $l$ be a positive integer. Let $1=t_0<t_1<t_2<\cdots<t_l$ be integers such that $t_1\mid t_2\mid \cdots\mid t_l$.  Let $n$ and $\delta$ be positive integers satisfying $t_{l-1}<n-1\leq t_l$ and $n-t_1+1< \delta\leq n-1$. Let $\mathcal F$ be an $m\times n$ Ferrers diagram satisfying
\begin{itemize}
\item[$(1)$] $\gamma_{n-\delta}\leq t_1$,
\item[$(2)$] $\gamma_{n-\delta+1}\geq t_1$,
\item[$(3)$] $\gamma_{t_i}\geq t_{i+1}$ for $1\leq i \leq l-1$,
\item[$(4)$] $\gamma_{n-1}\geq t_{l}+\gamma_0$,
\end{itemize}
 Then there exists an optimal $[\mathcal F, \sum_{i=0}^{n-\delta} \gamma_i, \delta]_q$ code for any prime power $q$.
\end{theorem}

When $l=1$, Theorem \ref{the:from sys MRD-new1} together with Theorem \ref{thm:shortening} yields Theorem \ref{thm:from subcodes} (note that to remove the condition $\gamma_{n-\delta}\leq t_1$, Theorem \ref{thm:shortening} is needed).

\subsubsection{Use of MDS codes}

A construction for FDRM codes based on maximum distance separable (MDS) codes is presented in \cite{egrw}. It is known that an $[n,n-d+1,d]_{q}$ MDS code exists for any $q\geq n-1$ or $d\in \{1,2,n\}$ (see \cite{ms}).

A {\em diagonal} of a Ferrers diagram $\mathcal F$ is a consecutive sequence of entries, going upwards diagonally from the rightmost column to either the leftmost column or the first row. Let $D_i$, $i\in [m]$, denote the $i$-th diagonal in $\mathcal F$, where $i$ counts the diagonals from the top to the bottom and let $\theta_i$ denote the number of dots on $D_i$ in $\mathcal F$.

\begin{example}
For the Ferrers diagram in Example $\ref{1}$, its five diagonals are:
\begin{center}
$D_0=\bullet,~
D_1=\begin{array}{cc}
                    \bullet &  \\
                     & \bullet
                  \end{array},~
D_2=\begin{array}{ccc}
      \bullet &  &  \\
       & \bullet &  \\
       &  & \bullet
    \end{array},~
D_3=\begin{array}{cccc}
      \bullet &  &  &  \\
       & \bullet &  &  \\
       &  & \bullet &  \\
       &  &  & \bullet
    \end{array},~
D_4=\begin{array}{cccc}
      \bullet &  &  &  \\
       & \bullet &  &  \\
       &  & \bullet &  \\
       &  &  & \bullet
    \end{array}.$
\end{center}
\end{example}

\begin{theorem}{\rm (Construction 1 in  \cite{egrw})} \label{thm:from MDS}
Let $\mathcal F$ be an $m \times n$ Ferrers diagram and $\delta$ be an integer such that $0<\delta \leq n$. Let $\theta_{max}=\max_{i \in [m]}{\theta_i}$. Then there exists an $[\mathcal F, k, \delta]_q$ code for any prime power $q\geq \theta_{max}-1$, where $k=\sum_{i=0}^{m-1} {\max\{0, \theta_i-\delta +1\}}$.
\end{theorem}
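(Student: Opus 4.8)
The plan is to build the code one diagonal at a time, placing a (shortened) Reed--Solomon code on each diagonal and letting a codeword of the resulting code $\mathcal{C}_{\mathcal F}$ be any matrix whose restriction to each diagonal $D_i$ is a codeword of the corresponding component code, chosen independently across diagonals. Concretely, for each $i\in[m]$ with $\theta_i\geq\delta$ I would fix an $[\theta_i,\theta_i-\delta+1,\delta]_q$ MDS code $\mathcal{C}_i$; such a code exists precisely because $q\geq\theta_{max}-1\geq\theta_i-1$, so a Reed--Solomon-type MDS code of length $\theta_i\leq q+1$ and the required dimension is available. For $i$ with $\theta_i<\delta$ I would force all entries on $D_i$ to be zero. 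Since the diagonals partition the dot-cells of $\mathcal F$ and the component codes are chosen independently, the resulting code is $\mathbb{F}_q$-linear of dimension $\sum_{i\in[m]}\max\{0,\theta_i-\delta+1\}=k$, and every matrix in it is supported on $\mathcal F$. Thus the dimension count is immediate and the real work is the distance claim.

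To show the minimum rank distance is at least $\delta$, I would take any nonzero codeword $M$ and let $D_t$ be the diagonal of smallest index on which $M$ is nonzero. The restriction $M|_{D_t}$ is then a nonzero codeword of the MDS code $\mathcal{C}_t$, so it has at least $\delta$ nonzero entries; pick $\delta$ of them, say at cells $(i_1,j_1),\dots,(i_\delta,j_\delta)$ ordered so that $i_1<\cdots<i_\delta$. The crucial geometric observation is that on $D_t$ the quantity $j-i$ equals the constant $n-1-t$, and a smaller diagonal index corresponds to a larger value of $j-i$; hence $j_1<\cdots<j_\delta$ as well, and every cell $(i,j)$ with $j-i>n-1-t$ carries a zero entry of $M$ (either it lies on a diagonal above $D_t$, where $M$ vanishes by the choice of $t$, or it lies outside $\mathcal F$, where $M$ vanishes by definition of an FDRM code). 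I would then consider the $\delta\times\delta$ submatrix $M'$ of $M$ on rows $i_1,\dots,i_\delta$ and columns $j_1,\dots,j_\delta$: for $b>a$ one has $j_b-i_a=(i_b-i_a)+(n-1-t)>n-1-t$, so the $(a,b)$-entry of $M'$ vanishes, while the diagonal entries $M_{i_a,j_a}$ are nonzero. Thus $M'$ is lower triangular with nonzero diagonal, so $\det M'\neq 0$ and ${\rm rank}(M)\geq\delta$.

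Finally, to see that the distance equals $\delta$ (so that $\mathcal{C}_{\mathcal F}$ is genuinely an $[\mathcal F,k,\delta]_q$ code rather than merely having distance at least $\delta$), I would exhibit, in the nontrivial case $k>0$, a codeword of rank exactly $\delta$. Choosing a diagonal $D_t$ with $\theta_t=\theta_{max}\geq\delta$ together with a weight-$\delta$ codeword of $\mathcal{C}_t$ (an MDS code attains its minimum weight), and placing zeros on all other diagonals, yields a matrix whose only nonzero entries are $\delta$ cells lying in distinct rows and distinct columns; such a matrix has rank exactly $\delta$.

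The step I expect to carry all of the content is the triangularity argument: everything hinges on selecting the topmost nonzero diagonal so that the ``above-diagonal'' entries of the chosen submatrix are forced to vanish, thereby converting the $\delta$ guaranteed nonzeros of the MDS codeword into a nonsingular lower-triangular block. The dimension bookkeeping and the existence of the component MDS codes over a field with $q\geq\theta_{max}-1$ are routine by comparison.
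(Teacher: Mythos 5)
Your proof is correct and follows exactly the approach the paper attributes to Construction~1 of \cite{egrw}: place an MDS (generalized Reed--Solomon) code of minimum distance $\delta$ on each diagonal with $\theta_i\geq\delta$, and extract a nonsingular triangular $\delta\times\delta$ submatrix from the topmost nonzero diagonal of any nonzero codeword. The dimension count, the existence condition $q\geq\theta_{max}-1$, and the rank-$\delta$ codeword exhibiting tightness are all handled correctly.
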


Applying Theorems \ref{thm:shortening} and \ref{thm:from MDS}, Etzion and Silberstein obtained the following result.

\begin{corollary} {\rm (Theorem 11 in \cite{egrw})} \label{cor:delta=3}
Let $n\geq 3$. There exists an optimal $[\mathcal F, k, 3]_q$ code for any $n\times n$ Ferrers diagram $\cal F$ and any prime power $q$.
\end{corollary}

The disadvantage of Theorem \ref{thm:from MDS} is the requirement of large $q$. For example when $\cal F$ is an $n\times n$ Ferrers diagram with $i+1$ dots in its $i$-th column for $i\in [n]$, by Theorem \ref{thm:from MDS}, there exists an optimal $[\mathcal F, 3, n-1]_q$ code for any prime power $q\geq n-1$. Recently Antrobus and Gluesing-Luerssen showed that such optimal FDRM codes exist for any prime power $q$ via induction on $n$.

\begin{theorem} {\rm (Theorem 5.2 in \cite{al})} \label{thm:upper triangular}
Let $n\geq 3$. Assume $\cal F$ is an $n\times n$ Ferrers diagram with $i+1$ dots in its $i$-th column for $i\in [n]$. Then there exists an optimal $[\mathcal F, 3, n-1]_q$ code for any prime power $q$.
\end{theorem}

However, how to give other constructions for FDRM codes with the same parameters as those obtained from Theorem \ref{thm:from MDS}, but for any prime power $q$, is still an open problem. We shall exhibit three examples in Section 3 (Examples \ref{eg:sys MDS-1}, \ref{eg:sys MDS-2}, \ref{eg:sys MDS-3}) to touch this problem.

\subsubsection{Combination of FDRM codes}

To obtain new FDRM codes based on known ones, \cite{egrw} presented an excellent idea. We shall extend this idea in Section 4.

\begin{theorem}{\rm (Theorem 9 in \cite{egrw})} \label{thm:combine with same dim}
Let $\mathcal F_i$ for $i=1,2$ be an $m_i \times n_i$ Ferrers diagram, and $\mathcal C_i$ be an $[\mathcal F_i, k, \delta_i]_q$ code. Let $\mathcal D$ be an $m_3 \times n_3$ full Ferrers diagram with $m_3n_3$ dots, where $m_3 \geq m_1$ and $n_3 \geq n_2$. Let
\begin{center}
$\mathcal F=\left(
  \begin{array}{cc}
    \mathcal F_1 & \mathcal D \\
      & \mathcal F_2 \\
  \end{array}
\right)$
\end{center}
be an $m \times n$ Ferrers diagram, where $m=m_2+m_3$ and $n=n_1+n_3$. Then there exists an $[\mathcal F, k, \delta_1+\delta_2]_q$ code.
\end{theorem}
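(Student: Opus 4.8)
The plan is to build a single code whose codewords simultaneously carry a codeword of $\mathcal C_1$ in the $\mathcal F_1$-region and a codeword of $\mathcal C_2$ in the $\mathcal F_2$-region, driven by a \emph{common} message, while the $\mathcal D$-region (and the empty lower-left block) is filled with zeros. Concretely, I would fix $\mathbb F_q$-linear isomorphisms $\phi_i\colon\mathbb F_q^k\to\mathcal C_i$ for $i=1,2$; these exist precisely because both codes have the same dimension $k$. For each $x\in\mathbb F_q^k$ let $M(x)$ be the $m\times n$ matrix over $\mathbb F_q$ that places $\phi_1(x)$ in the top $m_1$ rows and leftmost $n_1$ columns (the $\mathcal F_1$-block), places $\phi_2(x)$ in the bottom $m_2$ rows and rightmost $n_2$ columns (the $\mathcal F_2$-block), and is zero in every remaining cell. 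Since $\phi_i(x)$ is supported on $\mathcal F_i$ and all other cells lie either in $\mathcal D$ or outside $\mathcal F$, the matrix $M(x)$ is supported on $\mathcal F$, so $\mathcal C:=\{M(x):x\in\mathbb F_q^k\}$ is a candidate FDRM code in $\mathcal F$.

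First I would verify that $\mathcal C$ is $\mathbb F_q$-linear of dimension $k$. Linearity is immediate from the linearity of $\phi_1,\phi_2$ and of the placement map $x\mapsto M(x)$. For the dimension, this map is injective: if $M(x)=0$ then in particular its $\mathcal F_1$-block vanishes, i.e. $\phi_1(x)=0$, whence $x=0$ because $\phi_1$ is injective. Hence $\dim_{\mathbb F_q}\mathcal C=k$.

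Next I would bound the minimum rank distance from below. The two inserted blocks occupy disjoint rows and disjoint columns of $M(x)$: using $m_3\ge m_1$, the $\mathcal F_1$-block lies in rows $0,\dots,m_1-1$ whereas the $\mathcal F_2$-block lies in rows $m_3,\dots,m-1$; using $n_3\ge n_2$, the $\mathcal F_1$-block lies in columns $0,\dots,n_1-1$ whereas the $\mathcal F_2$-block lies in columns $n-n_2,\dots,n-1$. Thus, after permuting rows and columns to group these index sets, $M(x)$ is block diagonal with diagonal blocks $\phi_1(x)$ and $\phi_2(x)$ and otherwise zero, so that
\[
\mathrm{rank}\,M(x)=\mathrm{rank}\,\phi_1(x)+\mathrm{rank}\,\phi_2(x).
\]
For any $x\neq 0$ both $\phi_1(x)$ and $\phi_2(x)$ are nonzero (each $\phi_i$ is injective), so $\mathrm{rank}\,\phi_i(x)\ge\delta_i$ by the minimum distance of $\mathcal C_i$; therefore $\mathrm{rank}\,M(x)\ge\delta_1+\delta_2$ for every nonzero $x$. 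This shows $\mathcal C$ is an $[\mathcal F,k,\delta_1+\delta_2]_q$ code.

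The step I expect to matter most is the simultaneity exploited in the last paragraph: the whole purpose of the equal-dimension hypothesis is that a single nonzero message $x$ forces both blocks $\phi_1(x)$ and $\phi_2(x)$ to be nonzero at once, which is exactly what makes the two minimum distances add. Had I instead taken the full direct sum $\{\,\mathrm{diag}(A_1,A_2):A_i\in\mathcal C_i\,\}$ of dimension $2k$, codewords with $A_2=0$ would have rank only $\ge\delta_1$ and the minimum distance would collapse to $\min\{\delta_1,\delta_2\}$. I would also note that the hypotheses that $\mathcal D$ is a full block and that $m_3\ge m_1$, $n_3\ge n_2$ serve only to guarantee that the array $\mathcal F$ is a genuine Ferrers diagram into which $\mathcal F_1$ and $\mathcal F_2$ embed without overlap; the construction itself never uses the $\mathcal D$-cells, setting them to zero, which is precisely what yields the clean block-diagonal rank identity above.
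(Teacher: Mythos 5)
Your proof is correct and follows essentially the same route the paper takes for its generalization (Theorem \ref{thm:combine generalized}): fix a linear isomorphism identifying $\mathcal C_1$ with $\mathcal C_2$ (your $\phi_2\circ\phi_1^{-1}$ is the paper's $\varphi$), place the two images in row- and column-disjoint blocks, and add the ranks; the paper's version merely allows a nonzero code in the $\mathcal D$-block, which is why it only gets the inequality $\mathrm{rank}\,\mathbf{C}\ge \mathrm{rank}\,\mathbf{X}+\mathrm{rank}\,\varphi(\mathbf{X})$ where you get equality. No gaps.
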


The limitation of Theorem \ref{thm:combine with same dim} can be shown in the following lemma.

\begin{lemma}\label{lem:dis combin}
Let $\delta$ be a positive integer. Let $\cal F$ be an $m\times n$ Ferrers diagram satisfying $$ \max_{n-\delta+1\leq i\leq n-1}(\gamma_i-\gamma_{i-1})<v_0= \min_{i\in[\delta]}v_i,$$ where $v_i$ is the number of dots in $\cal F$ which are not contained in the first $i$ rows and the rightmost $\delta-1-i$ columns. Then one cannot apply Theorem $\ref{thm:combine with same dim}$ to construct an optimal $[\mathcal F, v_0, \delta]_q$ code.
\end{lemma}

\proof Assume that an optimal $[\mathcal F, v_0, \delta]_q$ code can be constructed by Theorem \ref{thm:combine with same dim}, where
\begin{center}
$\mathcal F=\left(
  \begin{array}{cc}
    \mathcal F_1 & \mathcal D \\
      & \mathcal F_2 \\
  \end{array}
\right)$
\end{center}
is an $m \times n$ Ferrers diagram, $\mathcal F_j$ is an $m_j\times n_j$ Ferrers diagram, $\mathcal C_j$ is an $[\mathcal F_j, v_0, \delta_j]_q$ code for $j=1,2$, and $\delta=\delta_1+\delta_2$. Let $v_0^{(j)}$ be the number of dots in $\mathcal F_j$ which are not contained in the rightmost $\delta_j-1$ columns.

Consider $\mathcal C_1$. By Lemma \ref{lem:upper bound}, $v_0\leq v_0^{(1)}$, i.e., $\sum_{i=0}^{n-\delta} \gamma_i\leq \sum_{i=0}^{n_1-\delta_1} \gamma_i$, which yields $n-\delta\leq n_1-\delta_1$. Thus $n_1\geq n-\delta+\delta_1$ and $\delta_2=\delta-\delta_1\geq n-n_1$.

Consider $\mathcal C_2$. Since $\delta_2\leq n_2\leq n-n_1$, we have $\delta_2=n_2=n-n_1$. By Lemma \ref{lem:upper bound}, the existence of an $[\mathcal F_2, v_0, n_2]_q$ code implies $v_0$ is no more than the number of dots in $\mathcal F_2$ which are not contained in the rightmost $n_2-1$ columns. Hence, $v_0\leq \gamma_{n_1}-\gamma_{n_1-1}$, which contradicts with the known condition $\max_{n-\delta+1\leq i\leq n-1}(\gamma_i-\gamma_{i-1})<v_0$. \qed

\section{Constructions based on subcodes of MRD codes}

Let \boldmath $\mathbf{\beta}$ \unboldmath = $(\beta_0,  \beta_1, ..., \beta_{m-1})$ be an ordered basis of $\mathbb F_{q^m}$ over $\mathbb F_q$. There is a natural bijective map $\Psi_m$ from $\mathbb F_{q^m}^n$
to $\mathbb F_{q}^{m \times n}$ as follows:
\begin{center}
  $\Psi_m: \mathbb F^n_{q^m} \longrightarrow \mathbb F^{m\times n}_q$
\end{center}
\begin{center}
  $\textbf{a}=(a_0, a_1, \ldots, a_{n-1}) \longmapsto \textbf{A},
  $
\end{center}
where $\textbf{A}=\Psi_m(\textbf{a})\in \mathbb F^{m\times n}_q$ is defined such that
\begin{equation*}
 a_j=\sum_{i=0}^{m-1} A_{i, j}\beta_i
\end{equation*}
for any $j\in [n]$. For $a\in \mathbb F_{q^m}$, $(a)$ is a $1\times 1$ matrix and we simply write $\Psi_m((a))$ as $\Psi_m(a)$. It is readily checked that $\Psi_m$ satisfies linearity, i.e., $\Psi_m(x\textbf{c}_1+y\textbf{c}_2)=x\Psi_m(\textbf{c}_1)+y\Psi_m(\textbf{c}_2)$ for any $x,y\in \mathbb F_{q}$ and $\textbf{c}_1,\textbf{c}_2\in \mathbb F^n_{q^m}$.
The map $\Psi_m$ will be used to facilitate switching between a vector in $\mathbb F_{q^m}$ and its matrix representation over $\mathbb F_q$. In the sequel, we use both representations, depending on what is more convenient in the context and by slight abuse of notation, rank$(\textbf{a})$ denotes rank$({\Psi_m}(\textbf{a}))$.

The following lemma, implicitly shown in Section 5 in \cite{egrw}, is fundamental to construct FDRM codes via subcodes of MRD codes. All theorems in Section 2.3.1 are based on this lemma.

\begin{lemma}{\rm \cite{egrw}}\label{lem:subcode from MRD}
Assume that $m\geq n$. Let $\textbf{G}$ be a generator matrix of a systematic MRD$[m\times n,\delta]_q$ code, i.e., $\textbf{G}$ is of the form $(\textbf{I}_k|\textbf{A})$, where $k=n-\delta+1$. Let $0\leq \lambda_0 \leq \lambda_1\leq \cdots \leq \lambda_{k-1}\leq m$. Let $\textbf{U}=\{(u_{0},\ldots,u_{k-1}) \in \mathbb F_{q^{m}}^k:\Psi_{m}(u_i)=(u_{i,0},\ldots,u_{i,\lambda_{i}-1},0,\ldots,0)^T, u_{i,j}\in \mathbb F_q,i\in [k],j\in[\lambda_i]\}$. Then $\mathcal C=\{\Psi_m(\textbf{c}):\textbf{c}=\textbf{u}\textbf{G},\textbf{u}\in\textbf{U}\}$ is a linear FDRM code with dimension $\sum_{i=0}^{k-1} \lambda_i$ and rank at least $\delta$ over $\mathbb F_q$.
\end{lemma}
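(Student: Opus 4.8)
The plan is to verify in turn the three assertions hidden in the statement: that $\mathcal C$ is $\mathbb F_q$-linear of the claimed dimension, that its minimum rank is at least $\delta$, and that it is supported on a Ferrers diagram. First I would record that $\mathbf U$ is itself an $\mathbb F_q$-linear subspace of $\mathbb F_{q^m}^k$: the hypothesis forces the $i$-th coordinate $u_i$ to lie in the $\mathbb F_q$-span of $\beta_0,\ldots,\beta_{\lambda_i-1}$, an $\mathbb F_q$-space of dimension $\lambda_i$, and these constraints are closed under $\mathbb F_q$-addition and $\mathbb F_q$-scalar multiplication, so $\dim_{\mathbb F_q}\mathbf U=\sum_{i=0}^{k-1}\lambda_i$. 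The map $\mathbf u\mapsto\Psi_m(\mathbf u\mathbf G)$ is the composition of the $\mathbb F_{q^m}$-linear (hence $\mathbb F_q$-linear) encoding $\mathbf u\mapsto\mathbf u\mathbf G$ with the $\mathbb F_q$-linear bijection $\Psi_m$, so $\mathcal C$ is $\mathbb F_q$-linear. Because $\mathbf G=(\mathbf I_k|\mathbf A)$ has rank $k$ the encoding is injective, and $\Psi_m$ is a bijection, so the composite is injective on $\mathbf U$; this yields $\dim_{\mathbb F_q}\mathcal C=\dim_{\mathbb F_q}\mathbf U=\sum_{i=0}^{k-1}\lambda_i$.

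For the rank bound I would observe that $\mathcal C$ is contained in $\{\Psi_m(\mathbf u\mathbf G):\mathbf u\in\mathbb F_{q^m}^k\}$, the image under $\Psi_m$ of the given systematic MRD$[m\times n,\delta]_q$ code, every nonzero codeword of which has rank at least $\delta$. Since $\mathbf u\neq\mathbf 0$ forces $\mathbf u\mathbf G\neq\mathbf 0$ (again using that $\mathbf G$ contains $\mathbf I_k$), every nonzero element of $\mathcal C$ inherits rank at least $\delta$, so the minimum rank distance of $\mathcal C$ is at least $\delta$.

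The substantive step, and the one I expect to be the main obstacle, is identifying the Ferrers diagram. Using $\mathbf G=(\mathbf I_k|\mathbf A)$ I would write $\mathbf c=\mathbf u\mathbf G=(\mathbf u\,|\,\mathbf u\mathbf A)$, so that the first $k$ coordinates of $\mathbf c$ are exactly $u_0,\ldots,u_{k-1}$. Since $\Psi_m$ acts column by column, the $j$-th column of $\Psi_m(\mathbf c)$ is $\Psi_m(c_j)$; thus for $j\in[k]$ the hypothesis on $\mathbf U$ says column $j$ is supported only in rows $0,\ldots,\lambda_j-1$, while the last $\delta-1$ columns come from $\mathbf u\mathbf A$, whose entries range over all of $\mathbb F_{q^m}$ and so carry no support restriction. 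Hence every matrix in $\mathcal C$ vanishes outside the array $\mathcal F$ whose $j$-th column has $\lambda_j$ top-aligned dots for $j\in[k]$ and $m$ dots for $k\leq j\leq n-1$, where $n-k=\delta-1$. The chain $0\leq\lambda_0\leq\cdots\leq\lambda_{k-1}\leq m$ makes the column heights non-decreasing from left to right, so $\mathcal F$ is a genuine Ferrers diagram and $\mathcal C$ is an FDRM code in $\mathcal F$. The delicate point is precisely that the support constraints imposed coordinate-wise on $\mathbf U$ must pass unchanged to the columns of the encoded matrices; this relies essentially on $\mathbf G$ being systematic, so that the first $k$ entries of $\mathbf c$ are copied verbatim from $\mathbf u$, together with the column-wise nature of $\Psi_m$, and then on checking that the resulting height profile is admissible as a Ferrers diagram via the given monotonicity of the $\lambda_i$.
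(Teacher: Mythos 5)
Your argument is correct and follows essentially the same route as the paper's (very terse) proof: linearity and dimension are verified directly from the injectivity of $\mathbf u\mapsto\Psi_m(\mathbf u\mathbf G)$ on $\mathbf U$, and the rank bound comes from $\mathcal C$ being a subcode of the ambient MRD code. Your explicit identification of the Ferrers diagram (columns $j\in[k]$ with $\lambda_j$ top-aligned dots, the last $\delta-1$ columns full) is a correct extra detail that the paper deliberately defers --- it remarks right after the lemma that the diagram is not shown explicitly there and is characterized later in Construction~\ref{con:from sys MRD} --- modulo the degenerate case $\lambda_0=0$, where the leftmost column is empty and the array is a Ferrers diagram only after discarding that column.
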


\begin{proof}
One can easily verify the linearity and the dimension of the code. Since $\textbf{G}$ is a generator matrix of an MRD$[m \times n, \delta]_{q}$ code $\mathcal C_{M}$, $\mathcal C$ is a subcode of $\mathcal C_{M}$. So the minimum rank distance of the code $\mathcal C$ is $\delta$. \qed
\end{proof}

Lemma \ref{lem:subcode from MRD} doesn't show the Ferrers diagram used explicitly. However, if we could know more about the initial MRD code, then it would be possible to give a complete characterization of $\cal C$.

\begin{remark}\label{rek:subcodes}
Lemma $\ref{lem:subcode from MRD}$ can only be used to construct optimal FDRM codes satisfying $v_0=\sum_{i=0}^{n-\delta}\gamma_i = \min_{i\in[\delta]}v_i$ $($cf. Lemma $\ref{lem:upper bound})$, where $v_i$ is the number of dots in $\cal F$ which are not contained in the first $i$ rows and the rightmost $\delta-1-i$ columns.
\end{remark}

\subsection{Construction from a class of systematic MRD codes}

To construct systematic MRD codes, we need the following theorem, which provides a criterion for linear MRD codes.

\begin{theorem}{\rm \cite{tm}}\label{thm:cri-MRD}
Let $m\geq n$. Let $\textbf{G} \in \mathbb F_{q^m}^{k \times n}$ be a generator matrix of a linear rank-metric code $\mathcal C \subseteq \mathbb F_{q^m}^n$. Then $\mathcal C$ is an MRD code if and only if for any $\textbf{B} \in UT^*_n(q)$ every maximal minor of $\textbf{GB}$ is nonzero, where $UT^*_n(q)$ denotes the set of all $n\times n$ upper triangular matrices over $\mathbb F_{q}$ whose main diagonal elements are all $1$.
\end{theorem}

\begin{lemma}\label{lem:sys MRD}
Let $q$ be a prime power. Let $m,n$ and $\delta$ be positive integers satisfying $m>n\geq \delta\geq 2$. Let $k=n-\delta+1$ and $m\geq kn-k^2+2$. Let $(1, \beta, \beta^2, \ldots, \beta^{m-1} )$ be an ordered polynomial basis of $\mathbb F_{q^m}$ over $\mathbb F_q.$  If there exists a $k \times n$ matrix
\begin{center}
$\textbf{G}$=$\left(
               \begin{array}{cccccccccc}
                 1 &  &  &  &  & a_{1, k}\beta^k & a_{1, k+1}\beta^{k+1} & \cdots & a_{1, n-2}\beta^{n-2} & a_{1, n-1}\beta^{n} \\
                  & 1 &  &  &  & a_{2, k}\beta^{k-1} & a_{2, k+1}\beta^k & \cdots & a_{2, n-2}\beta^{n-3} & a_{2, n-1}\beta^{n-2} \\
                  &  & \ddots &  &  & \vdots & \vdots & \ddots & \vdots & \vdots \\
                  &  &  & 1 &  & a_{k-1, k}\beta^2 & a_{k-1, k+1}\beta^3 & \cdots & a_{k-1,n-2}\beta^{n-k} & a_{k-1, n-1}\beta^{n-k+1} \\
                  &  &  &  & 1 & a_{k, k}\beta & a_{k, k+1}\beta^2 & \cdots & a_{k, n-2}\beta^{n-k-1} & a_{k, n-1}\beta^{n-k} \\
               \end{array}
             \right) 
 $,
\end{center}
where $a_{i, j} \in \mathbb F^*_q$, $i \in \{1, 2, \ldots, k\}$ and $j \in \{k, k+1, \ldots, n-1\}$, such that every minor of the matrices
\begin{center}
$\textbf{A}_1=\left(
  \begin{array}{ccc}
    a_{1,k} & \cdots & a_{1,n-2} \\
    \vdots & \ddots & \vdots \\
    a_{k,k} & \cdots & a_{k,n-2} \\
  \end{array}
\right)$ and
$\textbf{A}_2=\left(
  \begin{array}{ccc}
    a_{2,k} & \cdots & a_{2,n-1} \\
    \vdots & \ddots & \vdots \\
    a_{k,k} & \cdots & a_{k,n-1} \\
  \end{array}
\right)$
\end{center}
is nonzero, then $\textbf{G}$ is a generator matrix of a systematic MRD$[m\times n,\delta]_q$ code.
\end{lemma}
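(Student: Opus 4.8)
The plan is to invoke the MRD criterion of Theorem \ref{thm:cri-MRD}. Writing $\textbf{G}=(\textbf{I}_k\mid \textbf{B}_0)$, where $\textbf{B}_0=(a_{i,j}\beta^{e_{i,j}})$ is the monomial block of the statement, it suffices to prove that for every $\textbf{B}\in UT^*_n(q)$ each maximal (that is, $k\times k$) minor of $\textbf{G}\textbf{B}$ is nonzero. First I would fix $\textbf{B}$ and partition it conformally with the split $n=k+(n-k)$ into blocks $\textbf{B}_{11}$ ($k\times k$), $\textbf{B}_{12}$, and $\textbf{B}_{22}$, so that $\textbf{G}\textbf{B}=(\textbf{B}_{11}\mid \textbf{B}_{12}+\textbf{B}_0\textbf{B}_{22})$; here $\textbf{B}_{11}$ and $\textbf{B}_{22}$ are unit upper triangular over $\mathbb F_q$. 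Fixing a set $J$ of $k$ columns and splitting $J=J_1\sqcup J_2$ with $J_1=J\cap[k]$ and $J_2=J\cap\{k,\dots,n-1\}$, the corresponding minor, expanded along the basis $(1,\beta,\dots,\beta^{m-1})$, is a polynomial $P_J(\beta)\in\mathbb F_q[\beta]$. The whole problem then reduces to showing $P_J\not\equiv 0$ while $\deg P_J<m$: since $\beta$ has degree $m$ over $\mathbb F_q$, this forces the minor to be a nonzero element of $\mathbb F_{q^m}$.

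The key structural observation is that, away from the single ``bumped'' entry $a_{1,n-1}\beta^{n}$ in the top-right corner, the exponent of $\beta$ in position $(r,j)$ of the monomial block is the affine function $d(r,j)=j-r$. Consequently the column degrees strictly increase from left to right, so $\textbf{B}_{12}$ (which only contributes constants) and the off-diagonal part of $\textbf{B}_{22}$ never reach the top degree, and each entry of $\textbf{B}_{12}+\textbf{B}_0\textbf{B}_{22}$ inherits the leading term of the corresponding entry of $\textbf{B}_0$. Because $d$ is affine, the $\beta$-exponents of the monomial columns sum along any transversal to $\sum_{j\in J_2}j-\sum_{r\in R_2}r$, depending only on the row-set $R_2$ matched to $J_2$ and not on the particular matching. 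From this I would derive that the top $\beta$-degree of $P_J$ equals $\sum_{j\in J}j-\binom{k}{2}$, increased by $1$ exactly when the bump is active; maximizing $\sum_{j\in J}j$ over all $k$-subsets gives the global bound $\deg P_J\le kn-k^2+1$, and it is precisely the hypothesis $m\ge kn-k^2+2$ that secures $\deg P_J<m$.

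It remains to identify the leading coefficient and show it is nonzero, which I expect to be the main obstacle. Here the point is that at the top degree the identity block pins down a unique matching: since column $j\in J_1$ of $\textbf{B}_{11}$ is supported in rows $\le j$ with a $1$ in row $j$, minimizing $\sum_{r\in R_2}r$ is equivalent to maximizing $\sum_{r\in R_1}r$ subject to the triangular matching constraint, which forces $R_1=J_1$ and hence $R_2=[k]\setminus J_1$, with the identity block contributing the factor $1$. The affine property then makes the leading coefficient of the monomial part the determinant of the corresponding submatrix of $(a_{i,j})$, free of any entries of $\textbf{B}$. A short case analysis on the bump finishes the proof: if $n-1\notin J$, this determinant is $\pm$ a minor of $\textbf{A}_1$; if $n-1\in J$ but the pivot row $0$ is consumed by $J_1$, it is $\pm$ a minor of $\textbf{A}_2$ (whose rows avoid the first row); and if the bump is active, factoring out $a_{1,n-1}\in\mathbb F_q^*$ again leaves $\pm$ a minor of $\textbf{A}_2$. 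All of these are nonzero by hypothesis, so $P_J\not\equiv 0$; as $J$ and $\textbf{B}$ were arbitrary, Theorem \ref{thm:cri-MRD} yields that $\textbf{G}$ generates an MRD$[m\times n,\delta]_q$ code, which is systematic by its form.
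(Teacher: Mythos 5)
Your proposal is correct and follows essentially the same route as the paper's proof in Appendix A: both apply the criterion of Theorem \ref{thm:cri-MRD}, view each maximal minor of $\textbf{GB}$ as a polynomial in $\beta$ of degree at most $kn-k^2+1<m$, and identify its leading coefficient (up to sign and a factor $a_{1,n-1}$) as a minor of $\textbf{A}_1$ or $\textbf{A}_2$ via the same three-way case split on whether the first and last columns belong to the chosen column set. Your affine-exponent bookkeeping is a somewhat cleaner way to organize the paper's explicit auxiliary matrices $\textbf{M}_1,\textbf{M}_2$, but the argument is the same.
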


\begin{proof}
Obviously, $n-k\geq 0$. By Theorem \ref{thm:cri-MRD}, it suffices to prove that for any $\textbf{B}$$\in$$UT^*_n(q)$, every $k$-minor of $\textbf{GB}$ is nonzero. To ensure smooth reading of the paper, we move the proof to Appendix \ref{app-A}. \qed
\end{proof}

For a vector $(v_1,v_2,\ldots,v_n)$ of length $n$, if its rightmost nonzero component is $v_r$ for some $1\leq r\leq n$, then $r$ is said to be the {\em valid length} of this vector.

\begin{construction}\label{con:from sys MRD}
Let $m,n$ and $\delta$ be positive integers satisfying $m>n\geq \delta\geq 2$. Let $k=n-\delta+1$ and $m\geq kn-k^2+2.$ If there exists a $k\times n$ matrix $\textbf{G}$ satisfying the condition in Lemma $\ref{lem:sys MRD}$ such that $\textbf{G}$ is a generator matrix of a systematic MRD$[m\times n,\delta]_q$ code, then there exists an optimal $[\mathcal F, \sum_{i=0}^{k-1} \gamma_i, \delta]_q$ code $\mathcal C$ for any $m'\times n$ Ferrers diagram $\mathcal F$ satisfying
\begin{itemize}
\item[$(1)$] $\gamma_{i}=\min \{\max\{\gamma_l+i-l: l\in [k]\},m\}$ for any $k\leq i \leq n-2$,
\item[$(2)$] $m'=\min\{\max\{\gamma_0+n,\max\{\gamma_l+n-1-l:1\leq l\leq k-1\}\},m\}$,
\end{itemize}
where $\gamma_i$, $i\in [n]$, is the number of dots in the $i$-th column of $\cal F$.
\end{construction}

\begin{proof} Start from the generator matrix $\textbf{G}$ of the given systematic MRD code. We can apply Lemma \ref{lem:subcode from MRD} by setting $\lambda_i=\gamma_i$, $i\in [k]$, to obtain an optimal FDRM code $\mathcal C$ in some Ferrers diagram $\mathcal F$ with dimension $\sum_{i=0}^{k-1} \gamma_i$ and rank at least $\delta$. It suffices to analyze the number of dots in each column of $\mathcal F$.

By Lemma \ref{lem:subcode from MRD}, for any $\textbf{c}=(c_0,c_1,\ldots,c_{n-1})\in \mathcal C$, we have $\textbf{c}=\textbf{u}\textbf{G}$ for some $\textbf{u}=(u_0,u_1,\ldots,u_{k-1})$.

When $0\leq i\leq k-1$, $c_i=u_i,$ and so $\Psi_m(c_i)=\Psi_m(u_i)=(u_{i,0},\ldots,u_{i,\gamma_i-1},0,\ldots,0)$. Thus, the $i$-th column of $\mathcal F$ has $\gamma_i$ dots.

When $k\leq i\leq n-2$, $c_i=\sum_{l=0}^{k-1} u_l a_{l+1,i}\beta^{i-l}$ and so $\Psi_m(c_i)=\sum_{l=0}^{k-1} a_{l+1,i}\Psi_m(u_l\beta^{i-l})$. For $l\in [k]$, $\Psi_{m}(u_l)=(u_{l,0},u_{l,1},\ldots,u_{l,\gamma_{l}-1},0,\ldots,0)^T$ implies  $u_l=u_{l,0}+u_{l,1}\beta+\cdots+u_{l,\gamma_l-1}\beta^{\gamma_l-1}$. Note that $\beta^{m+j}$ can be written as a linear combination of $1,\beta,\beta^2,\cdots,\beta^{m-1}$ for any nonnegative integer $j$. It follows that for each $l\in [k]$, as a vector of length $m$, $\Psi_m(u_l\beta^{i-l})$ has a valid length of at most $\min\{\gamma_l+i-l,m\}$. Thus $\Psi_m(c_i)$ has a valid length of at most $\max\{\gamma_l+i-l:l\in[k]\}$ if $\max\{\gamma_l+i-l:l\in[k]\}\leq m$, or $m$ otherwise, which coincides with Condition $(1)$.

When $i=n-1$, $c_{n-1}=\sum_{l=1}^{k-1} u_l a_{l+1,n-1}\beta^{n-1-l}+u_{0}a_{1,n-1}\beta^n$ and so $\Psi_m(c_{n-1})=\sum_{l=1}^{k-1} a_{l+1,n-1}\Psi_m(u_l\beta^{n-1-l})+a_{1,n-1}\Psi_m(u_{0}\beta^n)$. As a vector of length $m$, $\Psi_m(u_0\beta^{n})$ has a valid length of at most $\min\{\gamma_0+n,m\}$. For each $1\leq l\leq k-1$, $\Psi_m(u_l\beta^{n-1-l})$ has a valid length of at most $\min\{\gamma_l+n-1-l,m\}$. Thus $\Psi_m(c_i)$ has a valid length of at most $\max \{\gamma_0+n,\gamma_1+n-1-1,\gamma_2+n-1-2,\ldots,\gamma_{k-1}+n-1-(k-1)\}$ if $\max \{\gamma_0+n,\gamma_1+n-1-1,\gamma_2+n-1-2,\ldots,\gamma_{k-1}+n-1-(k-1)\}\leq m$, or $m$ otherwise, which coincides with Condition $(2)$. \qed
\end{proof}

\begin{example}\label{eg:sys MDS-1}
Let $q$ be a prime power. Let $m\geq 2n-2$ and $3\leq n\leq q+2$. Let $(1, \beta, \ldots, \beta^{m-1} )$ be an ordered polynomial basis of $\mathbb F_{q^m}$ over $\mathbb F_q.$  Construct a $2 \times n$ matrix
\begin{center}
 $\textbf{G}=\left(
               \begin{array}{cccccccc}
                 1 &   & \beta^2 & \beta^3 & \beta^4 & \cdots & \beta^{(n-2)} & \beta^n \\
                   & 1 & a_1\beta & a_2\beta^2 & a_3\beta^3 & \cdots & a_{n-3}\beta^{(n-3)} & \beta^{(n-2)} \\
               \end{array}
             \right)
 $,
\end{center}
where $a_{i} \in \mathbb F_q^*$, $1\leq i\leq n-3$, and $a_i\neq a_j$ for any $i\neq j$. By Lemma $\ref{lem:sys MRD}$, $\textbf{G}$ is a generator matrix of a systematic MRD$[m \times n, n-1]_{q}$ code.

Let $\cal F$ be an $m'\times n$ Ferrers diagram satisfying $\gamma_{0}\leq m-n$, $\gamma_i=\gamma_0+i$ for $1\leq i\leq n-2$ and $\gamma_{n-1}=m'=\gamma_{0}+n$. It is readily checked that $\cal F$ satisfies Conditions $(1)$ and $(2)$ in Construction $\ref{con:from sys MRD}$. Thus there exists an optimal $[\mathcal F, \gamma_0+\gamma_1, n-1]_q$ code.
\end{example}

\begin{remark}
When $q=n-2$, Example $\ref{eg:sys MDS-1}$ cannot be obtained from Theorem $\ref{thm:from MDS}$ since no $[n, 2,n-1]_{n-2}$ MDS code exists, even though here Theorem $\ref{thm:from MDS}$ can deal with all cases of prime power $q\geq n-1$. No known construction can be applied here to handle the case of $q=n-2$. It is readily checked that Theorems $\ref{thm:shortening}$-$\ref{thm:from Max}$ are invalid. Consider Theorem $\ref{the:from sys MRD-new1}$ with $l\geq 2$ $($when $l=1$, Theorem $\ref{the:from sys MRD-new1}$ degenerates into Theorem $\ref{thm:shortening})$. Condition $(1)$ yields $t_1\geq \gamma_1=\gamma_0+1$. Condition $(3)$ yields $\gamma_{t_1}\geq t_2\geq 2t_1$ $($note that $t_1\mid t_2$ and $t_1<t_2)$. Since $\gamma_{t_1}=\gamma_0+t_1$, we have $t_1\leq \gamma_0$, a contradiction. Thus Theorem $\ref{the:from sys MRD-new1}$ is also invalid. When $\gamma_0>1$, Example $\ref{eg:sys MDS-1}$ cannot be obtained from Theorem $\ref{thm:upper triangular}$. By using Lemma $\ref{lem:dis combin}$ $($note that $\max_{n-\delta+1\leq i\leq n-1}(\gamma_i-\gamma_{i-1})=2$ and $v_0=\gamma_0+\gamma_1=2\gamma_0+1)$, we have that Theorem $\ref{thm:combine with same dim}$ is invalid. Similar arguments hold for the following two examples.
\end{remark}

\begin{example}\label{eg:sys MDS-2}
Let $q=5$, $n=7$ and $m\geq 14$. Let $(1, \beta, \ldots, \beta^{m-1} )$ be an ordered polynomial basis of $\mathbb F_{5^m}$ over $\mathbb F_5.$  Construct a $3 \times 7$ matrix
\begin{center}
$\textbf{G}=\left(
              \begin{array}{ccccccc}
                1 &   &   & 2\beta^3 & 3\beta^4 & 4\beta^5 & \beta^7 \\
                  & 1 &   & \beta^2 & \beta^3 & \beta^4 & \beta^5 \\
                  &   & 1 & \beta & 2\beta^2 & 4\beta^3 & 3\beta^4 \\
              \end{array}
            \right)$.
\end{center}
By Lemma $\ref{lem:sys MRD}$, $\textbf{G}$ is a generator matrix of a systematic MRD$[m \times 7, 5]_5$ code.

Let $\cal F$ be an $m'\times n$ Ferrers diagram satisfying $\gamma_{0}\leq m-7$, $\gamma_i=\gamma_0+i$ for $1\leq i\leq 5$ and $\gamma_{6}=m'=\gamma_{0}+7$. It is readily checked that $\cal F$ satisfies Conditions $(1)$ and $(2)$ in Construction $\ref{con:from sys MRD}$. Thus there exists an optimal $[\mathcal F, \gamma_0+\gamma_1+\gamma_2, 5]_5$ code.
\end{example}

\begin{example}\label{eg:sys MDS-3}
Let $q=7$, $n=9$ and $m\geq 20$. Let $(1, \beta, \ldots, \beta^{m-1} )$ be an ordered polynomial basis of $\mathbb F_{7^m}$ over $\mathbb F_7.$  Construct a $3 \times 9$ matrix
\begin{center}
$\textbf{G}=\left(
              \begin{array}{ccccccccc}
                1 &   &   & 2\beta^3 & 3\beta^4 & 4\beta^5 & 5\beta^6 & 6\beta^7 & \beta^9 \\
                  & 1 &   & \beta^2 & \beta^3 & \beta^4 & \beta^5 & \beta^6 & \beta^7\\
                  &   & 1 & \beta & 6\beta^2 & 3\beta^3 & 5\beta^4 & 2\beta^5 & 4\beta^6\\
              \end{array}
            \right).$
\end{center}
By Lemma $\ref{lem:sys MRD}$, $\textbf{G}$ is a generator matrix of a systematic MRD$[m \times 9, 7]_7$ code.

Let $\cal F$ be an $m'\times n$ Ferrers diagram satisfying $\gamma_{0}\leq m-9$, $\gamma_i=\gamma_0+i$ for $1\leq i\leq 7$ and $\gamma_{8}=m'=\gamma_{0}+9$. It is readily checked that $\cal F$ satisfies Conditions $(1)$ and $(2)$ in Construction $\ref{con:from sys MRD}$. Thus there exists an optimal $[\mathcal F, \gamma_0+\gamma_1+\gamma_2, 7]_7$ code.
\end{example}

\begin{remark}
In Lemma $\ref{lem:sys MRD}$, the top right entry of $\textbf{G}$ uses the $n$-th power of $\beta$, which deviates from the patter in the rest of $\textbf{G}$. If $a_{1,n-1}\beta^{n-1}$ is taken as the top right entry in $\textbf{G}$, then similar arguments to those in the proof of Lemma $\ref{lem:sys MRD}$ show that $\textbf{G}$ is a generator matrix of a systematic MRD$[m\times n,\delta]_q$ code if every minor of the matrix
\begin{center}
$\left(
\begin{array}{ccccc}
a_{1,k} & a_{1,k+1} & \cdots & a_{1,n-2} & a_{1,n-1} \\
a_{2,k} & a_{2,k+1} & \cdots & a_{2,n-2} & a_{2,n-1} \\
\vdots & \vdots & \ddots & \vdots & \vdots \\
a_{k,k} & a_{k,k+1} & \cdots & a_{k,n-2} & a_{k,n-1} \\
\end{array}
\right)$
\end{center}
is nonzreo. However, so far we have not found any appropriate $\textbf{G}$ such that new optimal FDRM codes can be derived from it.
\end{remark}

\subsection{Construction based on subcodes of Gabidulin codes}

For any positive integer $i$ and any $a\in \mathbb F_{q^m}$, set $a^{[i]} \triangleq a^{q^i}$. In this section, we shall generalize Construction 2 in \cite{egrw} by exploring subcodes of Gabidulin codes.

Let $m\geq n$ and $q$ be any prime power. A Gabidulin code $\mathcal{G}[m\times n,\delta]_q$ is an MRD$[m\times n,\delta]_q$ code whose generator matrix $\textbf{G}$ in vector representation is
\begin{center}
$\textbf{G}=\left(
               \begin{array}{cccc}
                 g_0 & {g_1} & \cdots & g_{n-1} \\
                 g_0^{[1]} & g_1^{[1]} & \cdots & g_{n-1}^{[1]} \\
                 \vdots & \vdots & \ddots & \vdots \\
                 g_0^{[n-\delta]} & g_1^{[n-\delta]} & \cdots & g_{n-1}^{[n-\delta]} \\
               \end{array}
             \right)
$,
\end{center}
where $g_0, g_1, \ldots, g_{n-1} \in \mathbb F_{q^m}$ are linearly independent over $\mathbb F_q$ (see \cite{g}).

The following lemma is a generalization of Lemma 5 in \cite{egrw}, which only deals with the case of $r=1$. We move its proof to Appendix \ref{app-B}.

\begin{lemma}\label{lem:subcode from Gab}
Let $\eta,r,d,\kappa$ and $\mu$ be positive integers such that $\kappa=\eta-r-d+1$, $r<\kappa$ and $\eta\leq \mu+r$. Then there exists a matrix $ \textbf{G}\in \mathbb F_{q^{\mu}}^{\kappa \times \eta}$ of the following form
\begin{center}\begin{scriptsize}
$\left(
                           \begin{array}{cccccccccccccc}
                              1 &   &   &   &   &   &   & \alpha_{0,\kappa} & \cdots & \alpha_{0,\eta-r-1} & 0 & 0 & \cdots & 0 \\
                                & 1  &   &   &   &   &   & \alpha_{1,\kappa} & \cdots & \alpha_{1,\eta-r-1} & \alpha_{1,\eta-r} & 0 & \cdots & 0 \\
                                &   & \ddots  &   &  &    &   & \vdots & \ddots & \vdots & \vdots & \vdots & \ddots & \vdots \\
                                &   &   & 1  &   &   &   & \alpha_{r-1,\kappa} & \cdots & \alpha_{r-1,\eta-r-1} & \alpha_{r-1,\eta-r} & \alpha_{r-1,\eta-r+1} & \cdots & 0\\
                                &   &   &   & 1  &   &   & \alpha_{r,\kappa} & \cdots & \alpha_{r,\eta-r-1} & \alpha_{r,\eta-r} & \alpha_{r,\eta-r+1} & \cdots & \alpha_{r,\eta-1}\\
                                &   &   &   &   & \ddots  &   & \vdots & \ddots & \vdots & \vdots & \vdots & \ddots & \vdots\\
                                &   &   &   &   &   & 1 & \alpha_{\kappa-1,\kappa} & \cdots & \alpha_{\kappa-1,\eta-r-1}  & \alpha_{\kappa-1,\eta-r} & \alpha_{\kappa-1,\eta-r+1} & \cdots & \alpha_{\kappa-1,\eta-1} \\
                           \end{array}
                         \right)$\end{scriptsize}
\end{center}
\noindent satisfying that for each $0\leq i\leq r$, the sub-matrix obtained by removing the first $i$ rows, the leftmost $i$ columns and the rightmost $r-i$ columns of $\textbf{G}$ can produce an MRD$[\mu \times (\eta-r),d+i]_q$ code.
\end{lemma}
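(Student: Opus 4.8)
The plan is to reduce the statement to $r+1$ independent maximum-rank-distance checks, one for each index $i \in \{0,1,\ldots,r\}$, and then to exhibit explicit entries $\alpha_{i,j}$ making all of them hold at once. First I would record the shape of the $i$-th test matrix. Deleting the first $i$ rows, the leftmost $i$ columns and the rightmost $r-i$ columns of $\textbf{G}$ leaves a $(\kappa-i)\times(\eta-r)$ matrix whose first $\kappa-i$ columns are exactly $\textbf{I}_{\kappa-i}$ (the surviving part of the leading identity block), and whose remaining $d-1+i$ columns are the full $\alpha$-columns together with the $i$ staircase columns $\eta-r,\ldots,\eta-r+i-1$. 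Since the zeros in column $\eta-r+s$ occupy precisely rows $0,\ldots,s$, deleting the first $i$ rows removes all those zeros whenever $s\le i-1$, so every surviving staircase column becomes ``full'' inside the window. Thus the $i$-th test matrix has systematic form $(\textbf{I}_{\kappa-i}\mid\textbf{B}_i)$ with length $\eta-r$, dimension $\kappa-i$, and target distance $(\eta-r)-(\kappa-i)+1=d+i$; the hypotheses $r<\kappa$ and $\eta\le\mu+r$ give $\kappa-i\ge1$ and $\eta-r\le\mu$, so each target MRD code can exist over $\mathbb F_{q^\mu}$.

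For the construction I would take a polynomial basis $(1,\beta,\ldots,\beta^{\mu-1})$ of $\mathbb F_{q^\mu}$ and, writing $N=\eta-r$, set
\[
\alpha_{i,j}=a_{i,j}\,\beta^{\,j-i}\ \text{ when } j-i\le N-1,\qquad \alpha_{i,j}=0\ \text{ when } j-i\ge N,
\]
for suitable constants $a_{i,j}\in\mathbb F_q^*$ (possibly after adjusting the topmost exponents as in Lemma \ref{lem:sys MRD}). The motivating observation is that the forced zeros fall exactly on the cells with $j-i\ge N$: in column $\eta-r+s=N+s$ this reads $i\le s$, which reproduces the staircase of the statement. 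Consequently every nonzero exponent of $\beta$ inside any window lies in $\{0,1,\ldots,N-1\}$, hence stays strictly below $\mu$, so that no nonzero $\mathbb F_q$-combination of the relevant monomials can vanish; this is where $\eta\le\mu+r$ is used decisively.

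With the entries fixed, I would verify each window through the MRD criterion of Theorem \ref{thm:cri-MRD}: for every $\textbf{B}\in UT^*_{N}(q)$ one must show that each maximal minor of $(\textbf{I}_{\kappa-i}\mid\textbf{B}_i)\textbf{B}$ is nonzero. Since each window has the same monomial staircase shape as the matrix in Lemma \ref{lem:sys MRD}, I would mirror that argument: expand a maximal minor as a polynomial in $\beta$, isolate its unique extreme-degree term, and check that the coefficient of that term is a minor of the constant array $(a_{i,j})$, arranged to be nonzero. Because all exponents stay below $\mu$, a nonvanishing leading coefficient forces the whole minor to be nonzero. Alternatively, and more robustly, I would realise each window as a systematic Gabidulin generator matrix built from $\mathbb F_q$-linearly independent elements and their Frobenius images $g_l^{[\cdot]}$, so the minors become Gabidulin (Moore) determinants that are nonzero for \emph{every} prime power $q$.

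The main obstacle is the simultaneity. A single array $(a_{i,j})$, equivalently a single $\textbf{G}$, must make all $r+1$ heavily overlapping windows MRD at once, since consecutive windows share both the full $\alpha$-columns and many staircase entries; controlling these coupled minor conditions, rather than any one of them in isolation, is the delicate point. The Gabidulin/Frobenius realisation is attractive precisely because it discharges all windows uniformly and keeps the construction valid for every $q$, avoiding the field-size restrictions that a generic Reed–Solomon-style choice of the constants $a_{i,j}$ would incur. I would therefore expect the cleanest route to be to build the single band matrix from one family of $\mathbb F_q$-linearly independent elements and read off each window as a Moore matrix in systematic form, with $\eta\le\mu+r$ ensuring exactly that the overhanging cells, where no further independent evaluation element is available, are the ones set to zero.
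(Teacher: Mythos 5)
Your reduction of the statement to $r+1$ overlapping MRD windows and your computation of their parameters are correct, but neither of your two proposed routes closes the argument. The explicit monomial route fails under the lemma's hypotheses: the only size assumption available is $\eta\le\mu+r$, i.e.\ $\mu\ge\eta-r$, whereas the leading-term argument you want to borrow from Lemma \ref{lem:sys MRD} needs the \emph{determinant} of a maximal minor to have degree below $\mu$; that determinant is a sum of products of $\kappa-i$ entries, so its degree can be of order $\kappa(\eta-r)$, far exceeding $\mu$ (Lemma \ref{lem:sys MRD} explicitly assumes $m\ge kn-k^2+2$ for exactly this reason). Your remark that ``no nonzero $\mathbb F_q$-combination of the relevant monomials can vanish'' controls linear combinations of single entries, not determinants. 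Moreover that route would also require every minor of the constant array $(a_{i,j})$ to be nonzero, which imposes a lower bound on $q$, while the lemma must serve Construction \ref{con:subcode from Gab} and Theorem \ref{thm:subcodes from Gab} for every prime power $q$.

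Your second route is the right one in spirit --- the paper's proof is indeed Gabidulin-based --- but you leave exactly the ``simultaneity'' obstacle you name unresolved, and a single Moore matrix built from one family of independent elements does not suffice: it certifies one MRD window, not $r+1$ nested ones carrying the prescribed staircase of zeros. The missing mechanism is an induction on the staircase columns. Starting from a $\mathcal G[\mu\times(\eta-r),d]_q$ generator matrix, one row-reduces by subtracting consecutive Moore rows, using the fact that the Frobenius differences $f_{i,j}=g_{i,j}^{[1]}-g_{i,j}$ are again $\mathbb F_q$-linearly independent, so the lower $\kappa-i-1$ rows become a Moore matrix on a \emph{new} family of evaluation elements; one then appends one further element, $\mathbb F_q$-independent of the others (this is where $\mu\ge\eta-r$ is genuinely used), only to those lower rows. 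This single step simultaneously creates the zeros of the new staircase column in the frozen top rows and raises the minimum distance of the next window by one. Repeating $r$ times and normalizing yields $\textbf{G}$; without this differencing-and-extension device your sketch does not amount to a proof.
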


\begin{construction}\label{con:subcode from Gab}
Let $\delta$, $n$ and $r$ be positive integers satisfying $r+1\leq \delta \leq n-r$. To take a $\kappa \times \eta$ matrix $\textbf{G}$ satisfying Lemma $\ref{lem:subcode from Gab}$, assume that   $d=\delta-r$, $\kappa=n-\delta+1$, $\eta=n$ and $\mu=n-r$. Let $\mathcal{F}$ be an $m\times n$ Ferrers diagram, each of whose rightmost $\delta-1$ columns has at least $n-r$ dots. Let $\gamma_i$, $i\in [n]$, is the number of dots in the $i$-th column of $\mathcal F$. For $l\in [r]$, set $s_l= \min \{ \gamma_l-1, \gamma_{n-r+l}-n+r-\sum_{j=0}^{l-1}(s_j+1)-1 \}$. Let
\[
\begin{split}
\textbf{U}=& \left\{(u_0,\ldots,u_{\kappa-1}) \in \mathbb F_{q^{n-r}}^{\kappa}:\Psi_{n-r}(u_l)=(u_{l,0},\ldots,u_{l,s_l},0,\ldots,0)^T {~\rm for~} l\in [r], \right.\\
& \left. \Psi_{n-r}(u_l)=(u_{l,0},\ldots,u_{l,\gamma_{l}-1},0,\ldots,0)^T {~\rm for~} r\leq l\leq \kappa-1, {\rm ~all~ possible~} u_{i,j} \in \mathbb F_q \right\}.
\end{split}
\]
Let $\overline{\Psi}_{n-r}(u_l)=(u_{l,0},\ldots,u_{l,s_l})^T$ for $l\in [r]$. If $0\leq s_0\leq s_1 \leq \cdots \leq s_{r-1}$, then ${\cal C}=$
\begin{center}\small
$\left\{
\left(
\begin{array}{c}
\displaystyle\frac{
\begin{array}{c}
\Psi_{n-r}( \textbf{c})
\end{array}
      }{
\begin{array}{ccccccc}
0 & \cdots & 0 & \overline{\Psi}_{n-r}(u_0)  & \overline{\Psi}_{n-r}(u_1)  &  \cdots & \overline{\Psi}_{n-r}(u_{r-1}) \\
0 & \cdots  & 0 & 0 & \overline{\Psi}_{n-r}(u_0)  & \ddots & \overline{\Psi}_{n-r}(u_{r-2}) \\
\vdots &   & \vdots & \vdots &  \vdots & \ddots & \vdots \\
0 & \cdots & 0 & 0 &  0 &  \cdots & \overline{\Psi}_{n-r}(u_0) \\
\end{array} } \\
    \end{array}
  \right)\in \mathbb F^{m\times n}_q: \textbf{c}=\textbf{u}\cdot\textbf{G}, \textbf{u} \in \textbf{U}
\right\}$
\end{center}
is an $[\mathcal F, k, \delta]_q$ code for any prime power $q$, where $k=\sum_{i=0}^{r-1} s_i+r+ \sum_{i=r}^{n-\delta} \gamma_i$. Note that $m=n+\sum_{i=0}^{r-1} s_i$.
\end{construction}

\begin{proof}
One can easily verify the linearity and the dimension of the code. It remains to examine the minimum rank weight of any nonzero codeword $\textbf{C}$ from $\mathcal C$. Note that $\delta\leq n-r$, so $r\leq n-\delta<n-\delta+1=\kappa$.

Let $\textbf{C}$ be formed by $\textbf{c}=\textbf{uG}=(u_0, u_1, \ldots, u_{\kappa-1}) \textbf{G}$. Let $i^*=\min\{i:i\in[\kappa],u_i\neq 0, u_j=0 {~\rm for~any~} j<i\}$. Then $\textbf{c}=(0, \ldots,0,u_{i^*}, \ldots, u_{\kappa-1}) \textbf{G}$.

(1) If $i^*< r$, then let $\Psi^*_{n-r}(\textbf{uG})$ be an $(n-r)\times (n-r)$ matrix obtained by removing the leftmost $i^*$ columns and the rightmost $r-i^*$ columns of $\Psi_{n-r}(\textbf{uG})$. By Lemma \ref{lem:subcode from Gab}, $\Psi_{n-r}^*(\textbf{uG})$ is a codeword of an MRD$[\mu \times (n-r),\delta-r+i^*]_q$ code, whose generator matrix can be obtained by removing the first $i^*$ rows, the leftmost $i^*$ columns and the rightmost $r-i^*$ columns of $\textbf{G}$. Thus rank$(\Psi_{n-r}^*(\textbf{uG}))\geq \delta-r+i^*$.

Furthermore, under the broken line of $\textbf{C}$, since $\overline{\Psi}_{n-r}(u_{i^*})$ with  the length $s_{i^*}$ is a nonzero vector, the rightmost $r-i^*$ columns contribute rank $r-i^*$. Therefore, rank$(\textbf{C})\geq {\rm rank}(\Psi_{n-r}^*(\textbf{uG}))+r-i^* \geq \delta-r+i^*+r-i^*=\delta$.

$(2)$ If $i^*\geq r$, then let $\Psi^*_{n-r}(\textbf{uG})$ be an $(n-r)\times (n-r)$ matrix obtained by removing the leftmost $r$ columns of $\Psi_{n-r}(\textbf{uG})$. By Lemma \ref{lem:subcode from Gab}, $\Psi_{n-r}^*(\textbf{uG})$ is a codeword of an MRD$[\mu \times (n-r),\delta]_q$ code, whose generator matrix can be obtained by removing the first $r$ rows and the leftmost $r$ columns of $\textbf{G}$. Thus rank$(\textbf{C})\geq\delta$. \qed
\end{proof}

\begin{theorem}\label{thm:subcodes from Gab}
Let $\delta$, $n$ and $r$ be positive integers satisfying $r+1\leq \delta \leq n-r$. Let $\mathcal{F}$ be an $m\times n$ Ferrers diagram satisfying that
\begin{itemize}
\item[$(1)$] $\gamma_{n-\delta}\leq n-r$,
\item[$(2)$] $\gamma_{n-\delta+1}\geq n-r$,
\item[$(3)$] $\gamma_{n-r+l}\geq n-r+\sum_{j=0}^{l} \gamma_j$ for $l\in [r]$.
\end{itemize}
Then there exists an optimal $[\mathcal F, \sum_{i=0}^{n-\delta} \gamma_i, \delta]_q$ code for any prime power $q$.
\end{theorem}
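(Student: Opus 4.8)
The plan is to obtain the code directly from Construction \ref{con:subcode from Gab} and then read off optimality from the Singleton-like bound in Lemma \ref{lem:upper bound}; essentially all of the work lies in verifying that the hypotheses $(1)$--$(3)$ match the input required by the construction and in evaluating the parameters $s_l$.

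First I would check the input conditions of Construction \ref{con:subcode from Gab} for the triple $(\delta,n,r)$ and the diagram $\mathcal F$. The inequalities $r+1\le\delta\le n-r$ are assumed verbatim, and the demand that each of the rightmost $\delta-1$ columns carry at least $n-r$ dots is exactly hypothesis $(3)$. Here I would recall that in any Ferrers diagram the column counts are non-decreasing, $\gamma_0\le\gamma_1\le\cdots\le\gamma_{n-1}$, with $\gamma_0\ge 1$; combined with hypothesis $(1)$ this gives $\gamma_l\le\gamma_{n-r-1}\le n-r$ for every $l\le n-r-1$, which is what legitimizes representing the corresponding columns over $\mathbb F_{q^{n-r}}$ inside the construction (note $n-\delta\le n-r-1$ since $\delta\ge r+1$).

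The key step is to show that hypothesis $(2)$ forces $s_l=\gamma_l-1$ for every $l\in[r]$, and I would prove this by induction on $l$. Assuming $s_j=\gamma_j-1$ for $j<l$, the sum $\sum_{j=0}^{l-1}(s_j+1)$ reduces to $\sum_{j=0}^{l-1}\gamma_j$, so the second entry of the minimum defining $s_l$ equals $\gamma_{n-r+l}-n+r-\sum_{j=0}^{l-1}\gamma_j-1$; hypothesis $(2)$ at index $l$ states precisely that this quantity is at least $\gamma_l-1$, whence $s_l=\min\{\gamma_l-1,\,\cdot\,\}=\gamma_l-1$. The monotonicity $0\le s_0\le\cdots\le s_{r-1}$ then follows from $1\le\gamma_0\le\cdots\le\gamma_{r-1}$, so Construction \ref{con:subcode from Gab} applies and produces an $[\mathcal F,k,\delta]_q$ code with
$$k=\sum_{i=0}^{r-1}s_i+r+\sum_{i=r}^{n-\delta}\gamma_i=\sum_{i=0}^{r-1}(\gamma_i-1)+r+\sum_{i=r}^{n-\delta}\gamma_i=\sum_{i=0}^{n-\delta}\gamma_i.$$

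Optimality is then immediate from Lemma \ref{lem:upper bound}. Writing $v_i$ for the quantity in that lemma, the index $i=0$ counts the dots remaining after deleting the rightmost $\delta-1$ columns, so $v_0=\sum_{i=0}^{n-\delta}\gamma_i$; hence every $[\mathcal F,k',\delta]_q$ code satisfies $k'\le\min_{i\in[\delta]}v_i\le v_0=\sum_{i=0}^{n-\delta}\gamma_i$. Since the code just constructed already attains this dimension, it meets the bound with equality and is optimal (and, as a byproduct, $i=0$ realizes the minimum). I expect the genuine difficulty to be the bookkeeping concealed inside the appeal to Construction \ref{con:subcode from Gab}: one must confirm that superimposing the top Gabidulin block and the lower staircase block yields codewords whose support is exactly $\mathcal F$, i.e.\ that each column $i$ acquires precisely $\gamma_i$ dots and that the resulting number of rows is consistent with $\gamma_{n-1}=m$, which is where hypothesis $(2)$ is effectively tight at $i=r-1$. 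Once the construction is seen to apply on the nose, the minimum rank distance and dimension are inherited from it and optimality follows as above.
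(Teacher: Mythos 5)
Your proposal is correct and follows essentially the same route as the paper: apply Construction \ref{con:subcode from Gab}, use Condition $(2)$ to show by induction that $s_l=\gamma_l-1$ for $l\in[r]$ (hence the monotonicity requirement holds and the dimension telescopes to $\sum_{i=0}^{n-\delta}\gamma_i$), and conclude optimality from Lemma \ref{lem:upper bound} via $v_0$. The extra checks you supply (that Condition $(1)$ keeps the leftmost columns representable over $\mathbb F_{q^{n-r}}$ and the explicit dimension computation) are consistent with, and slightly more detailed than, the paper's argument.
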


\begin{proof} Note that $r+1\leq \delta$ yields $n-\delta\leq n-r-1$. Consider a new Ferrers diagram $\mathcal F'$ with $\gamma'_i$ for $i\in[n]$ as the number of dots in its $i$-th column, satisfying
\begin{equation}
\label{}\nonumber
\gamma'_i=\left \{
\begin {aligned}
&\gamma_i,~~{\rm if}~0\leq i\leq n-\delta;\\
&n-r,~~{\rm if}~n-\delta+1\leq i\leq n-r-1;\\
&n-r+\sum_{j=0}^{i+r-n} \gamma_j,~~{\rm if}~n-r\leq i\leq n-1.
\end {aligned}
\right.
\end{equation}
Then $\mathcal F'$ is a sub-Ferrers diagram of $\mathcal F$ and $\gamma'_{i}\leq n-r$ for $0\leq i\leq n-r-1$. According to Lemma \ref{lem:upper bound}, it suffices to show that there exists an optimal $[\mathcal F',\sum_{i=0}^{n-\delta} \gamma'_i, \delta]_q$ code, which implies the existence of an optimal $[\mathcal F, \sum_{i=0}^{n-\delta} \gamma_i, \delta]_q$ code.

Clearly, each of the rightmost $\delta-1$ columns of $\mathcal F'$ has at least $n-r$ dots. To apply Construction \ref{con:subcode from Gab}, we need to count $s_l$ for $l\in[r]$. By Condition $(3)$, $s_0= \min \{ \gamma'_0-1, \gamma'_{n-r}-n+r-1 \}=\gamma'_0-1$. It follows that by induction on $l$, $l\in[r]$, we have  $s_l= \min \{ \gamma'_l-1, \gamma'_{n-r+l}-n+r-\sum_{j=0}^{l-1}(s_j+1)-1 \}=\min \{ \gamma'_l-1, \gamma'_{n-r+l}-n+r-\sum_{j=0}^{l-1}\gamma'_j-1 \}=\gamma'_l-1$. Thus $0\leq s_0\leq s_1 \leq \cdots \leq s_{r-1}$, and Construction \ref{con:subcode from Gab} provides an $[\mathcal F', \sum_{i=0}^{n-\delta} \gamma'_i, \delta]_q$ code, which is optimal by Lemma \ref{lem:upper bound}. \qed
 \end{proof}

We remark that as a corollary of Theorem \ref{thm:subcodes from Gab} with $r=1$, we can obtain Theorem 8 in \cite{egrw}.

\begin{example}\label{eg:subcodes from Gab}
Let $n\geq 3$ and
\begin{center}
 $\mathcal F=\begin{array}{ccccccccc}
               \bullet & \bullet & \bullet & \bullet & \cdots & \bullet &\bullet & \bullet & \bullet \\
               \bullet & \bullet & \bullet & \bullet & \cdots & \bullet &\bullet &\bullet & \bullet \\
                &  & \bullet & \bullet & \cdots & \bullet &\bullet & \bullet & \bullet \\
                &  & \bullet & \bullet & \cdots & \bullet &\bullet & \bullet & \bullet \\
                &  &  &  & \ddots & \vdots & \vdots & \vdots & \vdots \\
                &  &  &  &  & \bullet &\bullet & \bullet & \bullet \\
                &  &  &  &  & \bullet &\bullet & \bullet & \bullet \\
                &  &  &  &  &  &  &\bullet & \bullet \\
                &  &  &  &  &  &  &\bullet & \bullet \\
                &  &  &  &  &  &  & & \bullet \\
                &  &  &  &  &  &  & & \bullet
             \end{array}
 $
\end{center}
be a $(2n+2) \times 2n$ Ferrers diagram. Take $\delta=4$. Apply Theorem $\ref{thm:subcodes from Gab}$ with $r=2$. One can check that the rightmost $3$ columns of $\mathcal F$ have at least $2n-2$ dots, $\gamma_{2n-2}=2n=2n-2+\gamma_0$ and $\gamma_{2n-1}=2n+2=2n-2+\gamma_0+\gamma_1$.
So $\mathcal F$ satisfies the conditions of Theorem $\ref{thm:subcodes from Gab}$, and an optimal $[\mathcal F,2(n-1)^2,4]_q$ code exists for any prime power $q$.
\end{example}

\begin{remark}
No known construction can be applied to obtain Example $\ref{eg:subcodes from Gab}$. It is readily checked  that Theorems $\ref{thm:shortening}$-$\ref{thm:from Max}$ and $\ref{thm:upper triangular}$ are invalid. Consider Theorem $\ref{the:from sys MRD-new1}$ with $l\geq 2$. Condition $(1)$ yields $t_1\geq \gamma_{2n-4}=2n-2$ and Condition $(3)$ yields $\gamma_{t_1}\geq t_2\geq 2t_1\geq 4n-4$. Due to $\gamma_{t_1}\leq 2n+2$, we have $n\leq 3$. So Theorem $\ref{the:from sys MRD-new1}$ is invalid when $n>3$. Actually by more careful calculation, one can check that Theorem $\ref{the:from sys MRD-new1}$ is also invalid for $n=3$. We leave the details to the interested reader. Theorem $\ref{thm:from MDS}$ can provide an $[\mathcal F,2n^2-4n,4]_q$ code for any prime power $q\geq 2n-1$, but it is not optimal. By using Lemma $\ref{lem:dis combin}$ $($note that $\max_{n-\delta+1\leq i\leq n-1}(\gamma_i-\gamma_{i-1})=2$ and $v_0=2(n-1)^2)$, one can see that Theorem $\ref{thm:combine with same dim}$ is invalid.
\end{remark}

\section{New Ferrers diagram rank-metric codes from old}

First we give a slight variation of Theorem \ref{thm:combine with same dim}. It is not complicated, but inspires us to establish this section.

\begin{theorem}\label{thm:combine generalized}
Let $\mathcal F_i$ for $i=1,2$ be an $m_i \times n_i$ Ferrers diagram, and $\mathcal C_i$ be an $[\mathcal F_i, k_i, \delta_i]_q$ code. Let $\mathcal D$ be an $m_3 \times n_3$ Ferrers diagram and $\mathcal C_3$ be a $[\mathcal D, k_3, \delta]_q$ code, where $m_3 \geq m_1$ and $n_3 \geq n_2$. Let $m=m_2+m_3$ and $n=n_1+n_3$. Let
\begin{center}
$\mathcal F=\left(
  \begin{array}{cc}
    \mathcal F_1 & \hat{\mathcal D} \\
      & \mathcal F_2 \\
  \end{array}
\right)$
\end{center}
be an $m \times n$ Ferrers diagram $\mathcal F$, where $\hat{\mathcal{D}}$ is obtained by adding the fewest number of new dots to the lower-left corner of ${\cal D}$ such that ${\cal F}$ is a Ferrers diagram. Then there exists an $[\mathcal F, \min\{k_1,k_2\}+k_3, \min\{\delta_1+\delta_2, \delta\}]_q$ code $\cal C$ satisfying that for any codeword $\textbf{C}\in {\cal C}$, $\textbf{C}|_{\mathcal F_1}=\boldsymbol{O}$ if and only if $\textbf{C}|_{\mathcal F_2}=\boldsymbol{O}$, where $\textbf{C}|_{\mathcal F_i}$ denotes the restriction of $\textbf{C}$ in ${\mathcal F_i}$ for $i=1,2$.
\end{theorem}

\begin{proof}
Obviously, ${\cal C}_3$ is also a $[\hat{\mathcal{D}}, k_2, \delta]_q$ code, and the existence of $[\mathcal F_i, k_i, \delta_i]_q$ code $\mathcal C_i$ for $i=1,2$ implies the existence of $[\mathcal F_i, \min\{k_1,k_2\}, \delta_i]_q$ code $\mathcal C'_i$. Let $\varphi$ be an isomorphism $\varphi : \mathcal C'_1\longrightarrow \mathcal C'_2$ (in the sense of linear spaces) and set
\begin{center}
  $\mathcal C=\begin{Bmatrix}
  \left(
    \begin{array}{cc}
      \textbf{X} & \textbf{D} \\
      \textbf{0} & \varphi(\textbf{X}) \\
    \end{array}
  \right): \textbf{X} \in \mathcal C'_1, \textbf{D} \in \mathcal C_3
  \end{Bmatrix}$.
\end{center}
Clearly $\mathcal C$ is a linear code of dimension $\min\{k_1,k_2\}+k_3$. It suffices to examine the minimum rank distance of $\cal C$. Take any nonzero codeword $\textbf{C}$ from $\mathcal C$. Since $\textbf{X}$ and $\varphi(\textbf{X})$ are either both zero or both nonzero, we consider the following two cases:

$\bullet$ $\textbf{X}\neq \textbf{0}$.
\begin{center}
  $rank(\textbf{C}) = rank
  \left(
    \begin{array}{cc}
      \textbf{X} & \textbf{D} \\
      \textbf{0} & \varphi(\textbf{X}) \\
    \end{array}
  \right)\geq  rank(\textbf{X})+rank(\varphi(\textbf{X})) \geq \delta_1+\delta_2$.
\end{center}

$\bullet$ $\textbf{X}= \textbf{0}$.
\begin{center}
 $rank(\textbf{C})=rank(\textbf{D}) \geq \delta$.
\end{center}
Therefore, $\mathcal C$ is an $[\mathcal F, \min\{k_1,k_2\}+k_3, \min\{\delta_1+\delta_2, \delta\}]_q$ code.
\end{proof} \qed

\subsection{Generalization of Theorem \ref{thm:combine generalized}}

To obtain optimal FDRM codes, in the process of using Theorem \ref{thm:combine generalized}, it is often required that $\mathcal C_3$ is an optimal $[\mathcal D, k_2, \delta]_q$ code. If the optimality of $\mathcal C_3$ is unknown, then what shall we do? A natural idea is to remove a sub-diagram from $\mathcal D$ to obtain a new Ferrers diagram $\mathcal D'$ such that the FDRM code in $\mathcal D'$ is optimal, and then mix the removed sub-diagram to ${\cal F}_1$ or ${\cal F}_2$. We shall illustrate the idea by using Example \ref{eg:com-1}. Note that for this example, one can check that any known constructions cannot attain the required dimension.

First, we introduce a new concept. Let $\mathcal F_1$ be an $m_1\times n_1$ Ferrers diagram, $\mathcal F_2$ be an $m_2\times n_2$ Ferrers diagram and $\mathcal F$ be an $m\times n$ Ferrers diagram. Let $\phi_l$ for $l\in \{1,2\}$ be an injection from $\mathcal F_l$ to $\mathcal F$ (in the sense of set-theoretical language).
$\mathcal F$ is said to be a {\em proper combination} of $\mathcal F_1$ and $\mathcal F_2$ on a pair of mappings $\phi_1$ and $\phi_2$, if
\begin{itemize}
\item[(1)] $\phi_1(\mathcal F_1)\cap \phi_2(\mathcal F_2)=\varnothing;$
\item[(2)] $|\mathcal F_1|+|\mathcal F_2|=|\mathcal F|;$
\item[(3)] for any $l\in \{1,2\}$ and any two different elements $(i_{l,1},j_{l,1})$, $(i_{l,2},j_{l,2})$ of $\mathcal F_l$, set $\phi_l(i_{l,1},j_{l,1})=(i'_{l,1},j'_{l,1})$ and $\phi_l(i_{l,2},j_{l,2})=(i'_{l,2},j'_{l,2})$; $i'_{l,1}=i'_{l,2}$ or $j'_{l,1}=j'_{l,2}$ whenever $i_{l,1}=i_{l,2}$ or $j_{l,1}=j_{l,2}$.
\end{itemize}
Condition (3) means that if two dots in ${\cal F}_l$ for $l\in\{1,2\}$ are in the same row or same column,
then their corresponding two dots in $\cal F$ are also in the same row or same column.

\begin{example}\label{eg:}
Let
\begin{center}
$\mathcal F_1=\begin{array}{cc}
                     \bullet & \bullet \\
                      \bullet & \bullet \\
                       & \bullet \\
                       & \bullet \\
                       & \bullet \\
                       & \bullet
                  \end{array}
$ ~~~and~~~
$\mathcal F_2=\begin{array}{cc}
   {\color{green}{\bullet}} & {\color{green}{\bullet}} \\
    & {\color{green}{\bullet}} \\
    & {\color{green}{\bullet}}
 \end{array}
$
\end{center} be two Ferrers diagrams. Then all of
\begin{center}
$\mathcal D_1=\begin{array}{cccc}
                {\color{green}{\bullet}} & \bullet & {\color{green}{\bullet}} & \bullet \\
                 & \bullet & {\color{green}{\bullet}} & \bullet \\
                 &  & {\color{green}{\bullet}} & \bullet \\
                 &  &  & \bullet \\
                 &  &  & \bullet \\
                 &  &  & \bullet
              \end{array}
$, ~
$\mathcal D_2=\begin{array}{ccccc}
                {\color{green}{\bullet}} & {\color{green}{\bullet}} & {\color{green}{\bullet}} & \bullet & \bullet \\
                 &  & {\color{green}{\bullet}} & \bullet & \bullet \\
                 &  &  &  & \bullet \\
                 &  &  &  & \bullet \\
                 &  &  &  & \bullet \\
                 &  &  &  & \bullet
              \end{array}
$, ~
$\mathcal D_3=\begin{array}{cccccc}
                {\color{green}{\bullet}} & {\color{green}{\bullet}} & {\color{green}{\bullet}} & {\color{green}{\bullet}} & \bullet & \bullet \\
                 &  &  &  & \bullet & \bullet \\
                 &  &  &  &  & \bullet \\
                 &  &  &  &  & \bullet \\
                 &  &  &  &  & \bullet \\
                 &  &  &  &  & \bullet
              \end{array}
$, ~
$\mathcal D_4=\begin{array}{cc}
                \bullet  & \bullet  \\
                \bullet  & \bullet  \\
                {\color{green}{\bullet}} & \bullet  \\
                {\color{green}{\bullet}} & \bullet  \\
                {\color{green}{\bullet}} & \bullet  \\
                {\color{green}{\bullet}} & \bullet
              \end{array}
$
\end{center}
 are proper combinations of $\mathcal F_1$ and $\mathcal F_2$. Note that $\mathcal F_1$ keeps its shape invariant in $\mathcal D_l$ for any $l\in\{1,2,3,4\}$; $\mathcal F_2$ keeps its shape invariant in $\mathcal D_1$ and $\mathcal D_2$ $($the transpose of $\mathcal F_2$ is allowed$)$; $\mathcal F_2$ degenerates into a single row or column in $\mathcal D_3$ and $\mathcal D_4$.
\end{example}

\begin{proposition}\label{lem:prop}
Let $\mathcal F$ be a proper combination of Ferrers diagrams $\mathcal F_1$ and $\mathcal F_2$. Then for each $l\in\{1,2\}$, either $\mathcal F_l$ keeps its shape invariant in $\mathcal F$ $($the transpose of $\mathcal F_l$ is allowed$)$, or $\mathcal F_l$ degenerates into a single row or column in $\mathcal F$.
\end{proposition}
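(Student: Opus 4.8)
The plan is to treat the two injections $\phi_1,\phi_2$ separately and to show that each $\phi_l$ either preserves the row/column grid of $\mathcal F_l$ (possibly interchanging the roles of rows and columns, which is the transpose case) or crushes all of $\mathcal F_l$ onto a single line of $\mathcal F$. Throughout I would exploit two structural features of an $m_l\times n_l$ Ferrers diagram $\mathcal F_l$: its first row is full (has $n_l$ dots) and its rightmost column is full (has $m_l$ dots). Consequently every cell $(i,j)\in\mathcal F_l$ is linked through $(i,n_l-1)$ along its row and through $(0,j)$ along its column to the top-right corner $(0,n_l-1)$, so $\mathcal F_l$ is connected under the relation ``lie in a common row or column'' --- precisely the relation that condition $(3)$ in the definition of a proper combination controls.

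The first step is a \emph{Line Lemma}: the image under $\phi_l$ of any row (or column) of $\mathcal F_l$ that contains at least two cells lies in a single row or a single column of $\mathcal F$. Any two cells of such a row share a row of $\mathcal F_l$, so by condition $(3)$ their images share a row or a column of $\mathcal F$; and for any three such cells an elementary argument, using that a cell lies in a unique row and a unique column and that $\phi_l$ is injective, forces the three images onto one common line (if $d_1,d_2$ share a row while $d_3$ shares only a column with $d_1$, then $d_3$ can match $d_2$ neither in row nor in column, a contradiction). If $m_l=1$ or $n_l=1$ this already settles the claim, the image being one line.

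Assuming $m_l,n_l\ge 2$, the second step pins down the behaviour at the corner. Row $0$ (full, hence multi-cell) maps to a line $L_{\mathrm{row}}$ and column $n_l-1$ (full, multi-cell) to a line $L_{\mathrm{col}}$, and $\phi_l(0,n_l-1)$ lies on both. If $L_{\mathrm{row}}$ is a row and $L_{\mathrm{col}}$ a column we are in the shape-preserving case, if the reverse holds in the transpose case, and if the two lines are of the same kind then, being distinct lines through a common image, they must coincide in one line $R$ --- the degenerate case. The third step is propagation. In the degenerate case, for any cell $(i,j)$ with $i>0$, $j<n_l-1$, the cells $(0,j)$ and $(i,n_l-1)$ already lie in $R$; applying condition $(3)$ along column $j$ and along row $i$, and using injectivity, forces $\phi_l(i,j)\in R$ as well, so all of $\mathcal F_l$ collapses onto $R$. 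In the shape-preserving (and, symmetrically, the transpose) case the same style of argument shows every multi-cell row is row-type and every multi-cell column is column-type; since a single-cell row has its dot in the full last column and a single-cell column has its dot in the full first row, these consistent row- and column-images determine $\phi_l$ on \emph{all} cells, giving a grid-preserving embedding.

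I expect the main obstacle to be the global consistency underlying the second and third steps: ruling out a ``mixed'' map in which some multi-cell rows go to rows while others go to columns yet nothing degenerates. The resolution is exactly the corner-plus-skeleton argument, driven by the injectivity fact that two images sharing both a row and a column must coincide and by the connectivity of $\mathcal F_l$ through its full first row and full last column, which is what pins the map to a single global type.
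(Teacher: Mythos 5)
Your proposal is correct and takes essentially the same route as the paper's own proof: both exploit condition (3) together with the fullness of the first row and the rightmost column of $\mathcal F_l$, split into a shape-preserving case and a degenerate case, and propagate the degenerate case through the rightmost column. Your write-up is simply more explicit, supplying the three-cell ``Line Lemma'' and the corner argument where the paper says ``it is readily checked.''
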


\proof For $l\in\{1,2\}$, if $\mathcal F_l$ only contains one row or one column, then the conclusion follows immediately. Assume that $R_1$ and $R_2$ are two different rows of $\mathcal F_l$, and $R_1$ contains at least two dots. It is readily checked that these two rows either keep their shape invariant in $\mathcal F$ (a transpose is allowed), or degenerate into a single row $R$ or column $C$ in $\mathcal F$. If it is the latter, then by considering the rightmost two dots of $R_1$ and $R_2$ in $\mathcal F_l$ we have that the rightmost column in $\mathcal F_l$ must degenerate into the row $R$ or column $C$ in $\mathcal F$ (we refer to it as Fact A), and for any dot $P$ in any row (if this row exists) of $\mathcal F_l$ other than $R_1$ and $R_2$, if $P$ is not in the rightmost column, then there exists one dot $P'$ in the rightmost column of $\mathcal F_l$ such that $P$ and $P'$ are in the same row, which yields that $P$ and $P'$ must degenerate into the same row or column in $\mathcal F$. Now it suffices to show that the row must be $R$ or the column must be $C$.

Since $R_1$ contains at least two dots, there exists a dot $P''$ in $\mathcal F_l$ such that (i) $P''$ is not in the rightmost column, (ii) $P''$ and $P$ ($P''$ could be $P$) are in the same row, and (iii) there exists a dot $P'''$ in $R_1$ such that $P''$ and $P'''$ are in the same column. Because of Fact A, $P'$ and $P'''$ degenerate into the row $R$ or the column $C$ in $\mathcal F$. This forces $P$, $P'$, $P''$ and $P'''$ to degenerate into the row $R$ or column $C$. Therefore, all dots in $\mathcal F_l$ must degenerate into the single row $R$ or column $C$ in $\mathcal F$. \qed

\begin{lemma}\label{lem:prop comb}
For $l\in\{1,2\}$, let $\mathcal F_l$ be an $m_l\times n_l$ Ferrers diagram and $\boldsymbol{M}_l$ be an $m_l\times n_l$ matrix whose entries not in $F_l$ are all zero. Let $\mathcal F$ be a proper combination of $\mathcal F_1$ and $\mathcal F_2$ on a pair of mappings $\phi_1$ and $\phi_2$ such that $\mathcal F$ is an $m\times n$ Ferrers diagram. Let $\boldsymbol{M}_{12}$ be an $m\times n$ matrix satisfying
\begin{equation}\label{}\nonumber
\boldsymbol{M}_{12}(i,j)=\left \{
\begin {aligned}
&\boldsymbol{M}_{1}(i_1,j_1),~~{\rm if}\ (i,j)=\phi_1(i_1,j_1) ~{\rm and}\ (i_1,j_1)\in {\mathcal F}_1;\\
&\boldsymbol{M}_{2}(i_2,j_2),~~{\rm if}\ (i,j)=\phi_2(i_2,j_2) ~{\rm and}\ (i_2,j_2)\in {\mathcal F}_2;\\
&0,~~{\rm otherwise}.
\end {aligned}
\right.
\end{equation}
Then rank$(\boldsymbol{M}_{12})\leq$ rank$(\boldsymbol{M}_{1})+$ rank$(\boldsymbol{M}_{2})$.
\end{lemma}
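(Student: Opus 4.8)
The plan is to split $\boldsymbol{M}_{12}$ into two matrices, one carrying the embedded copy of $\boldsymbol{M}_1$ and the other the embedded copy of $\boldsymbol{M}_2$, each of rank no larger than that of the corresponding $\boldsymbol{M}_l$, and then invoke subadditivity of rank. Concretely, for $l\in\{1,2\}$ define the $m\times n$ matrix $\boldsymbol{N}_l$ by $\boldsymbol{N}_l(i,j)=\boldsymbol{M}_l(i_l,j_l)$ when $(i,j)=\phi_l(i_l,j_l)$ for some $(i_l,j_l)\in\mathcal F_l$, and $\boldsymbol{N}_l(i,j)=0$ otherwise. Condition (1) says $\phi_1(\mathcal F_1)$ and $\phi_2(\mathcal F_2)$ are disjoint, while conditions (1)--(2) together force $\phi_1(\mathcal F_1)\cup\phi_2(\mathcal F_2)=\mathcal F$, since they are two disjoint subsets of $\mathcal F$ whose cardinalities sum to $|\mathcal F|$. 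Hence each cell of $\mathcal F$ lies in exactly one image, and comparing with the defining formula for $\boldsymbol{M}_{12}$ gives $\boldsymbol{M}_{12}=\boldsymbol{N}_1+\boldsymbol{N}_2$. Subadditivity then yields ${\rm rank}(\boldsymbol{M}_{12})\leq{\rm rank}(\boldsymbol{N}_1)+{\rm rank}(\boldsymbol{N}_2)$.

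The substance of the argument is to show ${\rm rank}(\boldsymbol{N}_l)\leq{\rm rank}(\boldsymbol{M}_l)$ for each $l$, and here I would apply Proposition \ref{lem:prop}, which says each $\mathcal F_l$ either keeps its shape in $\mathcal F$ (a transpose being allowed) or degenerates into a single row or column. In the shape-preserving case, condition (3) forces $\phi_l$ to factor through the row and column indices, that is $\phi_l(i,j)=(r(i),c(j))$ for injective maps $r$ and $c$ on the row- and column-indices (or $\phi_l(i,j)=(c(j),r(i))$ in the transposed case). Consequently $\boldsymbol{N}_l$ is obtained from $\boldsymbol{M}_l$ by possibly transposing and then distributing its rows and columns among those of an $m\times n$ array, padding with zero rows and columns; none of these operations changes the rank, so ${\rm rank}(\boldsymbol{N}_l)={\rm rank}(\boldsymbol{M}_l)$. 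In the degenerate case all nonzero entries of $\boldsymbol{N}_l$ lie in a single line, so ${\rm rank}(\boldsymbol{N}_l)\leq 1$; and if $\boldsymbol{N}_l\neq\boldsymbol 0$ then $\boldsymbol{M}_l\neq\boldsymbol 0$, giving ${\rm rank}(\boldsymbol{N}_l)\leq 1\leq{\rm rank}(\boldsymbol{M}_l)$, with the case $\boldsymbol{N}_l=\boldsymbol 0$ being trivial.

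Combining the two steps gives ${\rm rank}(\boldsymbol{M}_{12})\leq{\rm rank}(\boldsymbol{N}_1)+{\rm rank}(\boldsymbol{N}_2)\leq{\rm rank}(\boldsymbol{M}_1)+{\rm rank}(\boldsymbol{M}_2)$, as desired.

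The main obstacle is the claim in the shape-preserving case that $\phi_l$ genuinely respects the row/column structure, so that $\boldsymbol{N}_l$ is a rank-preserving rearrangement of $\boldsymbol{M}_l$. This is delicate because the ``or'' in condition (3) allows, a priori, two dots in one row of $\mathcal F_l$ to land in the same \emph{column} of $\mathcal F$, which would mix rows and columns and potentially inflate the rank; it is precisely Proposition \ref{lem:prop} that rules out such mixing by pinning down the global behaviour of $\phi_l$, so the bulk of the care is already encapsulated there and I would lean on it rather than re-deriving the structure.
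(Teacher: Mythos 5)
Your proposal is correct and follows essentially the same route as the paper: decompose $\boldsymbol{M}_{12}$ as the sum of its restrictions to $\phi_1(\mathcal F_1)$ and $\phi_2(\mathcal F_2)$ (your $\boldsymbol{N}_1+\boldsymbol{N}_2$), apply subadditivity of rank, and bound each restriction's rank by ${\rm rank}(\boldsymbol{M}_l)$ via Proposition \ref{lem:prop}. The only difference is that you spell out the last step (shape-preserving versus degenerate case) in more detail than the paper, which simply cites the proposition.
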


\proof For $l\in\{1,2\}$, denote by $\boldsymbol{M}_{12}|_{\mathcal F_l}$ the restriction of $\boldsymbol{M}_{12}$ in ${\mathcal F_l}$, i.e., $\boldsymbol{M}_{12}|_{\mathcal F_l}$ is an $m\times n$ matrix satisfying
\begin{equation}\label{}\nonumber
\boldsymbol{M}_{12}|_{\mathcal F_l}(i,j)=\left \{
\begin {aligned}
&\boldsymbol{M}_{12}(i,j),~~{\rm if}\ (i,j)\in \phi_l({\mathcal F}_l);\\
&0,~~{\rm otherwise}.
\end {aligned}
\right.
\end{equation}
Then using the basic fact that rank$(\boldsymbol{A}+\boldsymbol{B})\leq$ rank$(\boldsymbol{A})+$ rank$(\boldsymbol{B})$, we have rank$(\boldsymbol{M}_{12})\leq$ rank$(\boldsymbol{M}_{12}|_{\mathcal F_1})+$ rank$(\boldsymbol{M}_{12}|_{\mathcal F_2})$. By Proposition \ref{lem:prop}, rank$(\boldsymbol{M}_{12}|_{\mathcal F_l})\leq$ rank$(\boldsymbol{M}_{l})$ for $l\in\{1,2\}$. The conclusion is then straightforward. \qed

\begin{example}\label{eg:com-1}
We here construct an optimal $[\mathcal F,10,4]_q$ code $\mathcal C$ for any prime power $q$, where
\begin{center}
$\mathcal{F}=
\begin{array}{ccccccccc}
 & {\color{blue}{\bullet}} & {\color{blue}{\bullet}} & {\color{blue}{\bullet}} & \bullet & \bullet & \bullet & \bullet & \bullet \\
 & {\color{blue}{\bullet}} & {\color{blue}{\bullet}} & {\color{blue}{\bullet}} & \bullet & \bullet & \bullet & \bullet & \bullet \\
 & ~~ & {\color{blue}{\bullet}} & {\color{blue}{\bullet}} & \bullet & \bullet & \bullet & \bullet & \bullet \\
 & ~~ & ~~& ~~ & {\color{yellow}{\bullet}} & \bullet & \bullet & \bullet & \bullet \\
   & ~ & ~~ & ~~ & ~~ & ~~ & ~~ & ~~ & \bullet \\
   & ~ & ~~ & ~~ & ~~ & ~~ & ~~ & ~~ & \bullet \\
 & ~ & ~~ & ~~ & ~~ & ~~ & ~~ & ~~ & \bullet \\
 & ~ & ~~ & ~~ & ~~ & ~~ & ~~ & ~~ & {\color{green}{\bullet}} \\
 & ~ & ~~ & ~~ & ~~ & ~~ & ~~ & ~~ & {\color{green}{\bullet}} \\
 & ~ & ~~ & ~~ & ~~ & ~~ & ~~ & ~~ & {\color{green}{\bullet}}
\end{array}$.
\end{center}
First take the following four Ferrers sub-diagrams of $\mathcal F$:
\begin{center}
$
\mathcal{F}_4=\begin{array}{ccccc}
                \bullet & \bullet & \bullet & \bullet & \bullet \\
                \bullet & \bullet & \bullet & \bullet & \bullet \\
                \bullet & \bullet & \bullet & \bullet & \bullet \\
                ~~ & \bullet & \bullet & \bullet & \bullet \\
                ~~ & ~~ & ~~ & ~~ & \bullet \\
                ~~ & ~~ & ~~ & ~~ & \bullet \\
                ~~ & ~~ & ~~ & ~~ & \bullet
              \end{array}
$,~~~
$ \mathcal{F}_2= {\color{yellow}{\bullet}} $,~~~
$
\mathcal{F}_1=\begin{array}{ccc}
{\color{blue}{\bullet}} & {\color{blue}{\bullet}} & {\color{blue}{\bullet}} \\
 {\color{blue}{\bullet}} & {\color{blue}{\bullet}} & {\color{blue}{\bullet}} \\
 & {\color{blue}{\bullet}} & {\color{blue}{\bullet}}
              \end{array}
$,~~~
$
\mathcal{F}_3=\begin{array}{c}
                  {\color{green}{\bullet}} \\
                  {\color{green}{\bullet}} \\
                  {\color{green}{\bullet}}
              \end{array}
$.

\end{center}
Then take a proper combination $\mathcal{F}_{12}$ of $ \mathcal{F}_1 $ and $ \mathcal{F}_2 $ on mappings $\phi_1$ and $\phi_2$ as follows
\begin{center}
$ \begin{array}{ccc}
{\color{blue}{\bullet}} & {\color{blue}{\bullet}} & {\color{blue}{\bullet}} \\
 {\color{blue}{\bullet}} & {\color{blue}{\bullet}} & {\color{blue}{\bullet}} \\
{\color{yellow}{\bullet}} & {\color{blue}{\bullet}} & {\color{blue}{\bullet}}
              \end{array} \triangleq \mathcal{F}_{12}
$,
\end{center}
where $\phi_1:\mathcal{F}_1\longrightarrow\mathcal{F}_{12}$ satisfies $\phi_1(i,j)=(i,j)$ for any $(i,j)\in ([3]\times [3])\setminus \{(2,0)\}$, and $\phi_2:\mathcal{F}_2\longrightarrow\mathcal{F}_{12}$ satisfies $\phi_2(0,0)=(2,0)$. Now construct a new Ferrers diagram
\begin{center}
  $\mathcal F^*=\left(
                       \begin{array}{cc}
                         \mathcal F_{12} & \mathcal F_4 \\
                          & \mathcal F_3 \\
                       \end{array}
                     \right)$.
\end{center}
For any prime power $q$, by Theorem $\ref{thm:combine generalized}$, we have an $[\mathcal F^*, 10, 4]_q$ code $\mathcal C^*$ satisfying that for any $\textbf{D}\in {\cal C}^*$, $\textbf{D}|_{\mathcal F_{12}}=\boldsymbol{O}$ if and only if $\textbf{D}|_{\mathcal F_3}=\boldsymbol{O}$, where an optimal $[\mathcal{F}_{12}, 3, 3]_q$ code ${\cal C}_{12}$ exists by Theorem $\ref{thm:shortening}$, an optimal $[\mathcal{F}_4, 7, 4]_q$ code ${\cal C}_4$ exists by Theorem $\ref{thm:from subcodes}$, and an optimal $[\mathcal{F}_3, 3, 1]_q$ code ${\cal C}_3$ is trivial.

The above procedure from $\mathcal F$ to $\mathcal F^*$ yields a natural bijection $\psi$ from $\cal F$ to $\mathcal F^*$ $($note that $\psi(3,3)=(2,0))$. For each $\boldsymbol{D}\in \mathcal C^*$, construct a $10 \times 8$ matrix $\boldsymbol{C_D}$  such that
\begin{equation}
\label{}\nonumber
\boldsymbol{C_D}(i,j)=\left \{
\begin {aligned}
&\boldsymbol{D}({\psi(i,j)}),~~{\rm if}\ (i,j) \in \mathcal F;\\
&0,~~{\rm otherwise}.
\end {aligned}
\right.
\end{equation}
Let $\mathcal C=\{\boldsymbol{C_D}:\boldsymbol{D}\in \mathcal C^*\}$. Then $\mathcal C$ is an optimal $[\mathcal F, 10, 4]_q$ code. Clearly $\cal C$ and $\cal C^*$ have the same linearity and dimension. The optimality is guaranteed by Lemma $\ref{lem:upper bound}$. It suffices to prove that for each nonzero $\boldsymbol{C_D} \in \mathcal C$, rank$(\boldsymbol{C_D})\geq 4$.

Clearly, rank$(\boldsymbol{C_D})\geq$ rank$(\boldsymbol{C_D}|_{\mathcal F_1})+$rank$(\boldsymbol{C_D}|_{\mathcal F_2})+$rank$(\boldsymbol{C_D}|_{\mathcal F_3})$. By Lemma $\ref{lem:prop comb}$, rank$(\boldsymbol{C_D}|_{\mathcal F_1})+$ rank$(\boldsymbol{C_D}|_{\mathcal F_2})\geq $ rank$(\boldsymbol{D}|_{{\mathcal F}_{12}})$. Since rank$(\boldsymbol{C_D}|_{\mathcal F_3})=$ rank$(\boldsymbol{D}|_{\mathcal F_3})$, we have that rank$(\boldsymbol{C_D})\geq$ rank$(\boldsymbol{D}|_{{\mathcal F}_{12}})+$ rank$(\boldsymbol{D}|_{\mathcal F_3})$. Note that $\textbf{D}|_{\mathcal F_{12}}=\boldsymbol{O}$ if and only if $\textbf{D}|_{\mathcal F_3}=\boldsymbol{O}$. If $\textbf{D}|_{\mathcal F_{12}}\neq \boldsymbol{O}$, then since $\mathcal C_{12}$ is an $[\mathcal F_{12},3,3]_q$ code, rank$(\boldsymbol{D}|_{\mathcal F_{12}})\geq 3$, and since $\mathcal C_3$ is an $[\mathcal F_3,3,1]_q$ code, rank$(\boldsymbol{D}|_{\mathcal F_3})\geq 1$. So rank$(\boldsymbol{C_D})\geq 4$. If $\textbf{D}|_{\mathcal F_{12}}=\boldsymbol{O}$, then rank$(\boldsymbol{C_D})=$ rank$(\boldsymbol{C_D}|_{\mathcal F_4})=$ rank$(\boldsymbol{D}|_{\mathcal F_4})$. Since $\mathcal C_4$ is an $[\mathcal F_4,7,4]_q$ code, rank$(\boldsymbol{D}|_{\mathcal F_4})\geq4$. Therefore, rank$(\boldsymbol{C_D})\geq 4$.
\end{example}

\begin{remark}
No known construction can be applied to obtain Example $\ref{eg:com-1}$. By Remark $\ref{rek:subcodes}$, all theorems in Section $2.3.1$ are invalid. Theorem $\ref{thm:from MDS}$ can provide an $[\mathcal F,9,4]_q$ code for any prime power $q\geq 4$, but it is not optimal. Theorem $\ref{thm:combine with same dim}$ is also invalid. Otherwise, to get the required dimension $10$, $\mathcal F_2$ must contain the $7\times 5$ Ferrers diagram in the lower right corner of $\cal F$ as its sub-Ferrers diagram. But then one cannot find $\mathcal F_1$ with at least $10$ dots.
\end{remark}

Let us now generalize Example \ref{eg:com-1}.

\begin{construction}\label{con:com-1}
Let
\[\mathcal F=
\begin{array}{c@{\hspace{-5pt}}c@{\hspace{-5pt}}c}
&\begin{array}{cc}
\overbrace{\rule{15mm}{0mm}}^{n_1}&
\overbrace{\rule{33mm}{0mm}}^{n_4}
\end{array}
\\
\begin{array}{l}
m_1\left.\rule{0mm}{8mm}\right\{\\
\\
\\
\\
\\
\\
\\
\\
\end{array}
&
\begin{array}{ccccccccc}
\bullet & \ldots & \bullet & \bullet & \ldots & \bullet & \bullet & \ldots & \bullet \\
   \vdots & \mathcal F_1 & \vdots & \vdots &  &  &  & \mathcal F_4 & \vdots \\
   \circ & \ldots & \bullet & \bullet & \ldots & \bullet & \bullet &  & \bullet \\
        &  &  & \circ & \ldots & \circ & \bullet &  & \bullet \\
    &  &  & \vdots & \mathcal F_2 & \vdots & \vdots &  & \vdots \\
    &  &  & \circ & \ldots & \circ & \bullet & \ldots & \bullet \\
    &  &  &  &  &  & \circ & \ldots & \bullet \\
    &  &  &  &  &  & \vdots & \mathcal F_3 & \vdots \\
    &  &  &  &  &  & \circ & \ldots & \bullet
\end{array}
& \begin{array}{l}
\left.\rule{0mm}{15mm}\right\}m_4\\
\\\left.\rule{0mm}{8mm}\right\}m_3
\end{array}
\end{array}
\]
be an $m \times n$ Ferrers diagram, where $\mathcal F_i$ is an $m_i \times n_i$ Ferrers sub-diagram, $1\leq i\leq4$,
satisfying that $m=m_3+m_4$, $n=n_1+n_4$, $m_4\geq m_1+m_2$ and $n_4\geq n_2+n_3$. Suppose that 
$\mathcal F_{12}$ is a proper combination of $\mathcal F_1$ and $\mathcal F_2$, and $\mathcal C_{12}$ is an $[\mathcal F_{12}, k_1, \delta_1]_q $ code. If there exist an $[\mathcal F_3, k_3, \delta_3]_q$ code $\mathcal C_3$ and an $[\mathcal F_4, k_4, \delta_4]_q$ code $\mathcal C_4$, then there exists an $[\mathcal F, k, \delta]_q$ code $\mathcal C$, where $k=\min\{k_1,k_3\}+k_4,\delta=\min\{\delta_1 + \delta_3 , \delta_4\}$. Note that the dots $``\bullet"$ in $\cal F$ have to exist, whereas the dots $``\circ"$ can exist or not.
\end{construction}

\begin{proof}
Construct a new Ferrers diagram
\begin{center}
  $\mathcal F^*=\left(
                       \begin{array}{cc}
                         \mathcal F_{12} & \hat{\mathcal{F}_4} \\
                          & \mathcal F_3 \\
                       \end{array}
                     \right)$,
\end{center}
where $\hat{\mathcal{F}_4}$ is obtained by adding the fewest number of new dots to the lower-left corner of ${\cal F}_{4}$ such that ${\cal F}^*$ is a Ferrers diagram. Obviously, an $[\hat{\mathcal{F}_4}, k_4, \delta_4]_q$ code can be obtained by adding all-zero rows to matrices in ${\cal C}_4$. It follows that by Theorem \ref{thm:combine generalized}, an $[\mathcal F^*, k, \delta]_q$ code $\mathcal C^*$ exists, where $k=\min\{k_1,k_3\}+k_4,\delta=\min\{\delta_1 + \delta_3 , \delta_4\}$.

The above procedure from $\mathcal F$ to $\mathcal F^*$ yields a natural injection $\psi$ from $\mathcal F$ to $\mathcal F^*$. Now,  for each $\boldsymbol{D}\in \mathcal C^*$, construct an $m \times n$ matrix $\boldsymbol{C_D}$ such that
\begin{equation}
\label{}\nonumber
\boldsymbol{C_D}(i,j)=\left \{
\begin {aligned}
&\boldsymbol{D}({\psi(i,j)}),~~{\rm if}\ (i,j) \in \mathcal F;\\
&0,~~{\rm otherwise}.
\end {aligned}
\right.
\end{equation}
Let $\mathcal C=\{\boldsymbol{C_D}:\boldsymbol{D}\in \mathcal C^*\}$. It is readily checked that $\mathcal C$ is an $[\mathcal F,k,\delta]_q$ code. \qed
\end{proof}

When ${\cal F}_2$ is empty, Construction \ref{con:com-1} yields Theorem \ref{thm:combine generalized}.

\subsection{Relaxation of dimensions}

Construction \ref{con:com-1} produces an $[\mathcal F,\min\{k_1,k_3\}+k_4,\delta]_q$ code from an $[\mathcal F_{12},k_1,\delta_1]_q$ code $\mathcal C_{12}$, an $[\mathcal F_3,k_3,\delta_3]_q$ code $\mathcal C_3$ and an $[\mathcal F_4,k_4,\delta_4]_q$ code $\mathcal C_4$. This procedure doesn't make full use of dimensions of $\mathcal C_{12}$ and $\mathcal C_3$. We hope to find a proper combination $\mathcal F^{**}$ of $\mathcal F_{12}$ and $\mathcal F_3$ such that there exists an $[\mathcal F^{**},k',\delta']_q$ code, where $k'>\min\{k_1,k_3\}$. The following construction provides a possible way to realize our idea. We start from an example.

\begin{example}\label{eg:com-2}
We here construct an optimal $[\mathcal F,13,4]_q$ code $\mathcal C$ for any prime power $q$, where
\begin{center}
$\mathcal{F}=
\begin{array}{ccccccccccc}
 {\color{blue}{\bullet}} & {\color{blue}{\bullet}} & {\color{blue}{\bullet}} & {\color{blue}{\bullet}} & {\color{blue}{\bullet}} & \bullet & \bullet & \bullet & \bullet & \bullet \\
 {\color{blue}{\bullet}} & {\color{blue}{\bullet}} & {\color{blue}{\bullet}} & {\color{blue}{\bullet}} & {\color{blue}{\bullet}} & \bullet & \bullet & \bullet & \bullet & \bullet \\
{\color{blue}{\bullet}} & {\color{blue}{\bullet}} & {\color{blue}{\bullet}} & {\color{blue}{\bullet}} & {\color{blue}{\bullet}} & \bullet & \bullet & \bullet & \bullet & \bullet \\
  ~~ & ~~& ~~ & ~~& ~~ & {\color{yellow}{\bullet}} & \bullet & \bullet & \bullet & \bullet \\
  ~~ & ~~ & ~ & ~~ & ~~ & ~~ & ~~ & ~~ & ~~ & \bullet \\
  ~~ & ~~ & ~ & ~~ & ~~ & ~~ & ~~ & ~~ & ~~ & \bullet \\
 ~~ & ~~ & ~ & ~~ & ~~ & ~~ & ~~ & ~~ & ~~ & \bullet \\
 ~~ & ~~ & ~ & ~~ & ~~ & ~~ & ~~ & ~~ & ~~ & {\color{green}{\bullet}} \\
  ~~ & ~~ & ~ & ~~ & ~~ & ~~ & ~~ & ~~ & ~~ & {\color{green}{\bullet}} \\
 ~~ & ~~ & ~ & ~~ & ~~ & ~~ & ~~ & ~~ & ~~ & {\color{green}{\bullet}} \\
 ~~ & ~~ & ~ & ~~ & ~~ & ~~ & ~~ & ~~ & ~~ & {\color{green}{\bullet}}\\
 ~~ & ~~ & ~ & ~~ & ~~ & ~~ & ~~ & ~~ & ~~ & {\color{green}{\bullet}}
\end{array}$.
\end{center}
First take the following four Ferrers sub-diagrams of $\mathcal F$:
\begin{center}
$
\mathcal{F}_4=\begin{array}{ccccc}
                \bullet & \bullet & \bullet & \bullet & \bullet \\
                \bullet & \bullet & \bullet & \bullet & \bullet \\
                \bullet & \bullet & \bullet & \bullet & \bullet \\
                ~~ & \bullet & \bullet & \bullet & \bullet \\
                ~~ & ~~ & ~~ & ~~ & \bullet \\
                ~~ & ~~ & ~~ & ~~ & \bullet \\
                ~~ & ~~ & ~~ & ~~ & \bullet
              \end{array}
$,~~~
$ \mathcal{F}_2= {\color{yellow}{\bullet}} $,~~~
$
\mathcal{F}_1=\begin{array}{cccccc}
 {\color{blue}{\bullet}} & {\color{blue}{\bullet}} & {\color{blue}{\bullet}} & {\color{blue}{\bullet}} & {\color{blue}{\bullet}} \\
 {\color{blue}{\bullet}} & {\color{blue}{\bullet}} & {\color{blue}{\bullet}} & {\color{blue}{\bullet}} & {\color{blue}{\bullet}} \\
 {\color{blue}{\bullet}} & {\color{blue}{\bullet}} & {\color{blue}{\bullet}} & {\color{blue}{\bullet}} & {\color{blue}{\bullet}}
              \end{array}
$,~~~
$
\mathcal{F}_3=\begin{array}{c}
                  {\color{green}{\bullet}} \\
                  {\color{green}{\bullet}} \\
                  {\color{green}{\bullet}}\\
                  {\color{green}{\bullet}} \\
                  {\color{green}{\bullet}}
              \end{array}
$.

\end{center}
Then take a proper combination $\mathcal{F}_{123}$ of $ \mathcal{F}_1 $, $\mathcal{F}_2 $ and $\mathcal{F}_3$ on mappings $\phi_1$, $\phi_2$ and $\phi_3$ as follows
\begin{center}
$ \begin{array}{ccccc}
 {\color{blue}{\bullet}} & {\color{blue}{\bullet}} & {\color{blue}{\bullet}} & {\color{blue}{\bullet}} & {\color{blue}{\bullet}} \\
 {\color{blue}{\bullet}} & {\color{blue}{\bullet}} & {\color{blue}{\bullet}} & {\color{blue}{\bullet}} & {\color{blue}{\bullet}} \\
 {\color{blue}{\bullet}} & {\color{blue}{\bullet}} & {\color{blue}{\bullet}} & {\color{blue}{\bullet}} & {\color{blue}{\bullet}} \\
 {\color{green}{\bullet}} & {\color{green}{\bullet}} & {\color{green}{\bullet}} & {\color{green}{\bullet}} & {\color{green}{\bullet}} \\
  &  &  &  & {\color{yellow}{\bullet}}
              \end{array} \triangleq \mathcal{F}_{123},
$
\end{center}
where $\phi_1:\mathcal{F}_1\longrightarrow\mathcal{F}_{123}$ satisfies $\phi_1(i,j)=(i,j)$ for any $(i,j)\in [3]\times [5]$, $\phi_2:\mathcal{F}_2\longrightarrow\mathcal{F}_{123}$ satisfies $\phi_2(0,0)=(4,4)$, and $\phi_3:\mathcal{F}_3\longrightarrow\mathcal{F}_{123}$ satisfies $\phi_3(j,0)=(3,j)$ for any $j\in[5]$. By Theorem $\ref{thm:shortening}$, there exists an optimal $[\mathcal{F}_{123}^{t}, 6, 4]_q$ code, which implies an optimal $[\mathcal{F}_{123}, 6, 4]_q$ code $\mathcal C_{123}$. By Theorem $\ref{thm:from subcodes}$, there exists an optimal $[\mathcal{F}_4, 7, 4]_q$ code $\mathcal C_4$.

The above procedure yields two natural bijection $\psi_1:\mathcal F|_{\mathcal F_1,\mathcal F_2,\mathcal F_3}\longrightarrow \mathcal F_{123}$ and $\psi_2:\mathcal F|_{\mathcal F_4}\longrightarrow \mathcal F_4$. For each $\boldsymbol{B}\in \mathcal C_{123}$ and $\boldsymbol{D}\in \mathcal C_{4}$, construct a $12 \times 10$ matrix $\boldsymbol{C_{B,D}}$  such that
\begin{equation}
\label{}\nonumber
\boldsymbol{C_{B,D}}(i,j)=\left \{
\begin {aligned}
&\boldsymbol{B}({\psi_1(i,j)}),~~{\rm if}~(i,j) \in \mathcal F|_{\mathcal F_1,\mathcal F_2,\mathcal F_3};\\
&\boldsymbol{D}({\psi_2(i,j)}),~~{\rm if}~(i,j) \in \mathcal F|_{\mathcal F_4};\\
&0,~~{\rm if}~(i,j) \notin  \mathcal F.
\end {aligned}
\right.
\end{equation}
Let $\mathcal C=\{\boldsymbol{C_{B,D}}:\boldsymbol{B}\in {\cal C}_{123},\boldsymbol{D}\in \mathcal C_4\}$. Then $\mathcal C$ is an optimal $[\mathcal F, 13, 4]_q$ code. Clearly $\cal C$ is a code in $\cal F$ of  dimension $13$. The optimality is guaranteed by Lemma $\ref{lem:upper bound}$. It suffices to prove that for each nonzero $\boldsymbol{C_{B,D}} \in \mathcal C$, rank$(\boldsymbol{C_{B,D}})\geq 4$.

Clearly, rank$(\boldsymbol{C_{B,D}})\geq$ rank$(\boldsymbol{C_{B,D}}|_{\mathcal F_1})+$ rank$(\boldsymbol{C_{B,D}}|_{\mathcal F_2})+$ rank$(\boldsymbol{C_{B,D}}|_{\mathcal F_3})\geq$ rank$(\boldsymbol{B})$, where the second inequality comes from Lemma $\ref{lem:prop comb}$. If $\boldsymbol{B}\neq \boldsymbol{O}$, then since $\mathcal C_{123}$ is an $[\mathcal F_{123},6,4]_q$ code, rank$(\boldsymbol{B})\geq 4$. If $\boldsymbol{B}=\boldsymbol{O}$, then rank$(\boldsymbol{C_{B,D}})=$ rank$(\boldsymbol{C_{B,D}}|_{\mathcal F_4})=$ rank$(\boldsymbol{D})$. Since $\mathcal C_4$ is an $[\mathcal F_4,7,4]_q$ code and $\boldsymbol{D}\neq 0$, we have rank$(\boldsymbol{D})\geq 4$. Therefore, rank$(\boldsymbol{C_{B,D}})\geq 4$.
\end{example}

\begin{remark}
No known construction can be applied to obtain Example $\ref{eg:com-2}$. By Remark $\ref{rek:subcodes}$, all theorems in Section $2.3.1$ are invalid. Theorem $\ref{thm:from MDS}$ can provide an $[\mathcal F,11,4]_q$ code for any prime power $q\geq 4$, but it is not optimal. Theorem $\ref{thm:combine with same dim}$ is also invalid. Otherwise, to get the required dimension $13$, $\mathcal F_2$ must be the $9\times 5$ Ferrers diagram in the lower right corner of $\cal F$, and $\mathcal F_1$ must be the $3\times 5$ Ferrers diagram in the top left corner of $\cal F$. Then $\delta_1=\delta_2=1$ because of the dimension $13$, which contradicts with $\delta=4$.
\end{remark}

Let us now generalize Example \ref{eg:com-2}.

\begin{construction}\label{con:com-2}
Let
\[\mathcal F=
\begin{array}{c@{\hspace{-5pt}}c@{\hspace{-5pt}}c}
&\begin{array}{cc}
\overbrace{\rule{15mm}{0mm}}^{n_1}&
\overbrace{\rule{33mm}{0mm}}^{n_4}
\end{array}
\\
\begin{array}{l}
m_1\left.\rule{0mm}{8mm}\right\{\\
\\
\\
\\
\\
\\
\\
\\
\end{array}
&
\begin{array}{ccccccccc}
\bullet & \ldots & \bullet & \bullet & \ldots & \bullet & \bullet & \ldots & \bullet \\
   \vdots & \mathcal F_1 & \vdots & \vdots &  &  &  & \mathcal F_4 & \vdots \\
   \circ & \ldots & \bullet & \bullet & \ldots & \bullet & \bullet &  & \bullet \\
        &  &  & \circ & \ldots & \circ & \bullet &  & \bullet \\
    &  &  & \vdots & \mathcal F_2 & \vdots & \vdots &  & \vdots \\
    &  &  & \circ & \ldots & \circ & \bullet & \ldots & \bullet \\
    &  &  &  &  &  & \circ & \ldots & \bullet \\
    &  &  &  &  &  & \vdots & \mathcal F_3 & \vdots \\
    &  &  &  &  &  & \circ & \ldots & \bullet
\end{array}
& \begin{array}{l}
\left.\rule{0mm}{15mm}\right\}m_4\\
\\\left.\rule{0mm}{8mm}\right\}m_3
\end{array}
\end{array}
\]
be an $m \times n$ Ferrers diagram, where $\mathcal F_i$ is an $m_i \times n_i$ Ferrers sub-diagram, $1\leq i\leq4$, satisfying that $m=m_3+m_4$, $n=n_1+n_4$, $m_4\geq m_1+m_2$ and $n_4\geq n_2+n_3$.
Suppose that
$\mathcal F_{123}$ is a proper combination of $\mathcal F_1$, $\mathcal F_2$ and $\mathcal F_3$, and $\mathcal C_{123}$ is an $[\mathcal F_{123}, k_1, \delta_1]_q$ code. If there exists an $[\mathcal F_4, k_4, \delta_4]_q$ code $\mathcal C_4$, then there exists an $[\mathcal F, k_1+k_4, \delta]_q$ code $\mathcal C$, where $\delta=\min\{\delta_1 , \delta_4\}$.
\end{construction}

\begin{proof}
Take two natural bijections $\psi_1:\mathcal F|_{\mathcal F_1,\mathcal F_2,\mathcal F_3}\longrightarrow \mathcal F_{123}$ and $\psi_2:\mathcal F|_{\mathcal F_4}\longrightarrow \mathcal F_4$. For each $\boldsymbol{B}\in \mathcal C_{123}$ and $\boldsymbol{D}\in \mathcal C_{4}$, construct an $m \times n$ matrix $\boldsymbol{C_{B,D}}$  such that
\begin{equation}
\label{}\nonumber
\boldsymbol{C_{B,D}}(i,j)=\left \{
\begin {aligned}
&\boldsymbol{B}({\psi_1(i,j)}),~~{\rm if}~(i,j) \in \mathcal F|_{\mathcal F_1,\mathcal F_2,\mathcal F_3};\\
&\boldsymbol{D}({\psi_2(i,j)}),~~{\rm if}~(i,j) \in \mathcal F|_{\mathcal F_4};\\
&0,~~{\rm if}~(i,j) \notin  \mathcal F.
\end {aligned}
\right.
\end{equation}
Let $\mathcal C=\{\boldsymbol{C_{B,D}}:\boldsymbol{B}\in {\cal C}_{123},\boldsymbol{D}\in \mathcal C_4\}$. It is readily checked that $\mathcal C$ is an $[\mathcal F, k_1+k_4, \delta]_q$ code $\mathcal C$, where $\delta=\min\{\delta_1, \delta_4\}$. \qed
\end{proof}

\begin{remark}
Compared with Construction $\ref{con:com-1}$, Construction $\ref{con:com-2}$ starts from a proper combination $\mathcal F_{123}$ of $\mathcal F_1$, $\mathcal F_2$ and $\mathcal F_3$, which must be a Ferrers diagram according to the definition of proper combinations. This requirement sometimes restricts the value of rank of the resulting code $\mathcal C$. For example, in Example $\ref{eg:com-1}$, the proper combination of $\mathcal F_{12}$ and $\mathcal F_3$ will provide codes with rank at most $3$, while the required code has rank $4$.
\end{remark}

\begin{theorem}\label{thm:com-1}
Let $\delta\leq y\leq$ min$\{m-\delta+2,n-\delta-1\}$. Let
\[\mathcal F=
\begin{array}{c@{\hspace{-1pt}}c@{\hspace{-1pt}}c}
&\begin{array}{ccc}
\overbrace{\rule{16mm}{0mm}}^{n-y}&
\overbrace{\rule{16mm}{0mm}}^{y-\delta+1}&
\overbrace{\rule{16mm}{0mm}}^{\delta-1}
\end{array}
\\
&
\begin{array}{cccccccccc}
~{\color{black}{\bullet}} & \cdots & {\color{black}{\bullet}} & \bullet & \cdots & \bullet & \bullet & \cdots & \bullet \\
 ~\vdots & \ddots & \vdots & \vdots & \vdots & \vdots & \vdots & \ddots & \vdots   \\
 ~{\color{black}{\bullet}}& \cdots & {\color{black}{\bullet}} & \bullet & \cdots & \bullet & \bullet & \cdots & \bullet  \\
  &  &  & \circ & \cdots & \circ & \bullet & \ldots & \bullet   \\
  &  &  & \vdots & \mathcal P & \vdots &  \vdots & \ddots  & \vdots   \\
  &  &  & \circ & \cdots & \circ & \bullet & \cdots & \bullet \\
  &  &  &  &  &  &  &  & \bullet    \\
  &  &  &  &  &  &  &  & \vdots   \\
  &  &  &  &  &  &  &  & \bullet \\
  &  &  &  &  &  &  &  & {\color{black}{\bullet}}  \\
  &  &  &  &  &  &  &  & \vdots   \\
  &  &  &  &  &  &  &  & {\color{black}{\bullet}}
\end{array}
& \begin{array}{l}
\left.\rule{0mm}{7.5mm}\right\}\delta-1\vspace{0.3cm}\\
\left.\rule{0mm}{7.5mm}\right\}y-\delta\vspace{0.3cm}\\
\left.\rule{0mm}{8mm}\right\}\delta-1\vspace{0.3cm}\\
\left.\rule{0mm}{8mm}\right\}m-y-\delta+2
\end{array}
\end{array}
\]
be an $m \times n$ Ferrers diagram $ \mathcal F$. Let $z_i$ be the number of dots in the $i$-th column of $\mathcal P$, $i\in[y-\delta+1]$. If $z_0\leq n-y$, then Construction $\ref{con:com-2}$ provides an optimal $[\mathcal F, k, \delta]_q$ code, where
\begin{equation}
\label{}\nonumber
k=\left \{
\begin {aligned}
&m-y+1+(y-\delta)(\delta-1)+|\mathcal P|,~~{\rm if}~m-n\leq \delta-2;\\
&n-1+(y-\delta)(\delta-2)+|\mathcal P|,~~otherwise.\\
\end {aligned}
\right.
\end{equation}
\end{theorem}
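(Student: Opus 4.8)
The plan is to exhibit $\mathcal{F}$ as an instance of Construction \ref{con:com-2}, build the two ingredient codes from Theorems \ref{thm:from subcodes} and \ref{thm:shortening}, and then match the resulting dimension against the bound of Lemma \ref{lem:upper bound}. First I would fix the decomposition read off from the picture: let $\mathcal{F}_1$ be the full $(\delta-1)\times(n-y)$ rectangle in the upper-left corner, $\mathcal{F}_2=\mathcal{P}$ the middle block sitting below the top $\delta-1$ rows, $\mathcal{F}_3$ the single column of length $m_3=m-y-\delta+2$ formed by the bottom of the last column, and $\mathcal{F}_4$ the remainder of the rightmost $y$ columns, so that $m_4=y+\delta-2$ and $n_4=y$. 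One checks at once that $m=m_3+m_4$, $n=n_1+n_4$, $m_4\geq m_1+m_2$ and $n_4\geq n_2+n_3$ (the last inequality forcing $\delta\geq 2$), so Construction \ref{con:com-2} applies as soon as codes $\mathcal{C}_4$ and $\mathcal{C}_{123}$ are in hand.

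For $\mathcal{C}_4$ I would invoke Theorem \ref{thm:from subcodes}. The column counts of $\mathcal{F}_4$ are $\delta-1$ (repeated $y-\delta+1$ times), then $y-1$ (repeated $\delta-2$ times), then $y+\delta-2$; in particular each of its $\delta-1$ rightmost columns carries at least $y-1=n_4-1$ dots, so Theorem \ref{thm:from subcodes} yields $k_4=\min\{m_4-n_4+1,\delta-1\}+(y-\delta)(\delta-1)=(\delta-1)(y-\delta+1)$ with distance $\delta$. For $\mathcal{C}_{123}$ I would form a proper combination $\mathcal{F}_{123}$ of $\mathcal{F}_1,\mathcal{F}_2,\mathcal{F}_3$: degenerate the single column $\mathcal{F}_3$ into a row and attach it together with $\mathcal{P}$ to $\mathcal{F}_1$ so that $\mathcal{F}_{123}$ is a Ferrers diagram whose transpose $\mathcal{F}_{123}^{t}$ has its $\delta-1$ rightmost columns full; Theorem \ref{thm:shortening} applied to $\mathcal{F}_{123}^{t}$ then gives an optimal code of distance $\delta$ and dimension $k_1=\min\{m_3,n_1\}+|\mathcal{P}|=\min\{m-y-\delta+2,\,n-y\}+|\mathcal{P}|$. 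Since both ingredient codes have distance $\delta$, Construction \ref{con:com-2} produces an $[\mathcal{F},k_1+k_4,\delta]_q$ code. A short computation shows $k_1+k_4$ is the claimed $k$: when $m_3\leq n_1$, i.e. $m-n\leq\delta-2$, one gets $m-y+1+(y-\delta)(\delta-1)+|\mathcal{P}|$, and otherwise $n-1+(y-\delta)(\delta-2)+|\mathcal{P}|$, so the two branches of the statement are exactly the two branches of the $\min$ in $k_1$.

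Finally I would prove optimality by evaluating $\min_{i\in[\delta]}v_i$, where $v_i$ counts the dots of $\mathcal{F}$ outside the first $i$ rows and the rightmost $\delta-1-i$ columns. Reading $v_i$ off the column counts gives $v_i=(n-\delta+1)(\delta-1-i)+|\mathcal{P}|+i(y-1-i)$ for $i\leq\delta-2$ and $v_{\delta-1}=|\mathcal{P}|+(\delta-2)(y-\delta)+m-\delta+1$. The hypothesis $y\leq n-\delta-1$ makes the coefficient $y-n+\delta-2$ of $i$ negative, so $i\mapsto v_i$ is strictly decreasing on $\{0,\dots,\delta-2\}$ and the minimum equals $\min\{v_{\delta-2},v_{\delta-1}\}$; since $v_{\delta-2}-v_{\delta-1}=n-m+\delta-2$, the minimizer is $v_{\delta-1}$ when $m-n\leq\delta-2$ and $v_{\delta-2}$ otherwise, and in each case the value coincides with $k$, so the code is optimal. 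The main obstacle is the middle step: pinning down the proper combination $\mathcal{F}_{123}$ precisely enough that $\mathcal{F}_{123}^{t}$ meets the column-length hypothesis of Theorem \ref{thm:shortening} and returns exactly $\min\{m_3,n_1\}+|\mathcal{P}|$ in both regimes $m_3\leq n_1$ and $m_3>n_1$ — this is where the hypothesis $z_0\leq n-y$ is needed — whereas the optimality computation is routine once the $v_i$ are written down.
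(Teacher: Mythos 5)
Your overall strategy is the paper's: decompose $\mathcal F$ into four sub-diagrams, feed them into Construction \ref{con:com-2} with ingredient codes from Theorems \ref{thm:shortening} and \ref{thm:from subcodes}, and verify optimality against Lemma \ref{lem:upper bound}. Your choices of $\mathcal F_1$, $\mathcal F_3$, the computation $k_4=(y-\delta+1)(\delta-1)$ for your $\mathcal F_4$, and the optimality bookkeeping are all fine. The gap is in the step you yourself flag as the obstacle: you put the \emph{entire} block $\mathcal P$ into $\mathcal F_2$ and claim a proper combination $\mathcal F_{123}$ of $\mathcal F_1,\mathcal P,\mathcal F_3$ to which Theorem \ref{thm:shortening} applies with $k_1=\min\{m_3,n_1\}+|\mathcal P|$. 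This is not achievable in general. By Proposition \ref{lem:prop}, $\mathcal P$ must either keep its shape in $\mathcal F_{123}$ or collapse to a single row or column, and for Theorem \ref{thm:shortening} to return $\min\{m_3,n_1\}+|\mathcal P|$ the rightmost $\delta-1$ columns of $\mathcal F_{123}^{t}$ must consist of the $(\delta-1)n_1$ dots of $\mathcal F_1$ alone (or, in the other regime, of $\mathcal F_3$ plus $\delta-2$ rows of $\mathcal F_1$), each such column having at least as many dots as $\mathcal F_{123}$ has rows. Neither the height $y-\delta$ nor the total $|\mathcal P|$ is bounded by $n-y$ or by $m_3$, so either the shape-preserving placement overflows the width $n_1$ or the degenerate row of length $|\mathcal P|$ becomes one of the $\delta-1$ longest rows and is swallowed by the excluded columns. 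Concretely, take $\delta=3$, $m=n=12$, $y=8$, $z_0=4$, $z_1=\cdots=z_5=5$, so $|\mathcal P|=29$, $n_1=4$, $m_3=3$: you would need a $(\le 4)$-row Ferrers diagram whose two longest rows have $4$ dots each and whose remaining rows hold $32$ dots — impossible. The hypothesis $z_0\le n-y$ cannot rescue this, because in your decomposition it only constrains one column of $\mathcal P$.

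The paper's proof avoids this by splitting $\mathcal P$: it sets $\mathcal F_2$ equal to the \emph{first column} of $\mathcal P$ only ($z_0$ dots) and leaves $\mathcal P_1=\mathcal P\setminus\{\text{first column}\}$ inside $\mathcal F_4$. Then Theorem \ref{thm:from subcodes} gives $k_4=(y-\delta+1)(\delta-1)-z_0+|\mathcal P|$ (the $\min\{m_4-n_4+1,\gamma_0\}$ term caps the first column's contribution at $\delta-1$, which is exactly why those $z_0$ dots would otherwise be wasted), while $\mathcal F_{123}$ is just $\mathcal F_1$ stacked with the two degenerate rows $\mathcal F_3^{t}$ and $\mathcal F_2^{t}$ — a $(\delta+1)$-row diagram that fits precisely because $z_0\le n-y$ and $n-y\ge\delta+1$, giving $k_1=\min\{m_3,n_1\}+z_0$ via Theorem \ref{thm:shortening}. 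The totals $k_1+k_4$ agree with your arithmetic, but only the paper's split is actually constructible. To repair your argument you would need to adopt this split (or prove an analogue of Theorem \ref{thm:shortening} for your $\mathcal F_{123}$, which does not hold).
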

\begin{proof}
Let $\mathcal P_1$ denote the Ferrers diagram obtained by removing the first column of $\mathcal P$.
 Consider the following four Ferrers sub-diagrams of $\mathcal F$:
\begin{center}
$\mathcal F_1=\begin{array}{c@{\hspace{-5pt}}c@{\hspace{-5pt}}c}
&\begin{array}{cc}
\overbrace{\rule{15mm}{0mm}}^{n-y}
\end{array}
\\
&
\begin{array}{ccc}
               {\color{black}{\bullet}} & \cdots & {\color{black}{\bullet}}  \\
               \vdots & \ddots & \vdots  \\
               {\color{black}{\bullet}} & \cdots & {\color{black}{\bullet}}
             \end{array}
& \begin{array}{l}
\left.\rule{0mm}{8mm}\right\}\delta-1
\end{array}
\end{array},~~~~~~~
{\cal F}_2=
\begin{array}{c@{\hspace{-1pt}}c@{\hspace{-1pt}}c}
\begin{array}{c}
\circ \\
\vdots \\
\circ
\end{array} &
\begin{array}{l}
\left.\rule{0mm}{8mm}\right\}z_0
\end{array}
\end{array},~~~~~~~
{\mathcal F}_3=\begin{array}{c@{\hspace{-1pt}}c@{\hspace{-1pt}}c}
\begin{array}{c}
\bullet  \\
\vdots  \\
\bullet
\end{array}
& \begin{array}{l}
\left.\rule{0mm}{8mm}\right\}m-y-\delta+2
\end{array}
\end{array}$,
\end{center}

\begin{center}
$\mathcal F_4=
\begin{array}{c@{\hspace{-1pt}}c@{\hspace{-1pt}}c}
&\begin{array}{cc}
\overbrace{\rule{22mm}{0mm}}^{y-\delta+1}&
\overbrace{\rule{20mm}{0mm}}^{\delta-1}
\end{array}
\\
&
\begin{array}{cccccccc}
               \bullet & \bullet & \cdots & \bullet & \bullet & ~\cdots~ & \bullet \\
               \vdots & \vdots & \ddots & \vdots & \vdots & \ddots & \vdots  \\
               \bullet & \bullet &\cdots & \bullet & \bullet & \cdots & \bullet \\
            & \circ & \cdots & \circ & \bullet & \cdots & \bullet  \\
             & \vdots & \mathcal P_1 & \vdots & \vdots & \ddots & \vdots  \\
            & \circ & \cdots & \circ & \bullet & \cdots & \bullet  \\
             &   &  &  &  &  & \bullet  \\
              &  &  &  &  &  & \vdots  \\
               & &  &  &  &  & \bullet
\end{array}
& \begin{array}{l}
\left.\rule{0mm}{9mm}\right\}\delta-1\\
\left.\rule{0mm}{9mm}\right\}y-\delta\\
\left.\rule{0mm}{9mm}\right\}\delta-1
\end{array}
\end{array}
.
\begin{array}{cc}
&
\end{array}$
\end{center}

By Theorem \ref{thm:from subcodes}, there exists an $[\mathcal F_4, (y-\delta+1)(\delta-1)-z_0+|\mathcal P|, \delta]_q$ code.

If $n\geq m-\delta+2$, then $n-y\geq m-\delta+2-y$. When $m-y-\delta+2\geq z_0$, take a proper combination ${\cal F}_{123}$ of $\mathcal{F}_1 $, $ \mathcal{F}_2 $ and $ \mathcal{F}_3$ as follows (note that $z_0\leq n-y$ by assumption)
\begin{center}
  $\mathcal F_{123}=\left(
                       \begin{array}{c}
                         \mathcal F_{1} \\ \mathcal F_3^t \\
                         \mathcal F_2^t \\
                       \end{array}
                     \right)$.
\end{center}
When $m-y-\delta+2<z_0$, take
\begin{center}
  $\mathcal F_{123}=\left(
                       \begin{array}{c}
                         \mathcal F_{1} \\
                         \mathcal F_2^t \\
                         \mathcal F_3^t \\
                       \end{array}
                     \right)$.
\end{center}
${\cal F}_{123}$ is an $(\delta+1)\times(n-y)$ Ferrers diagram. By assumption, $n-y\geq \delta+1$, so by Theorem \ref{thm:shortening}, there exists an $[\mathcal F_{123}, m-y-\delta+2+z_0, \delta]_q$ code.
Then apply Construction \ref{con:com-2} to obtain an $[\mathcal{F}, m-y+1+(y-\delta)(\delta-1)+|\mathcal P|, \delta]_q$ code, which is optimal by Lemma \ref{lem:upper bound} (one can check it by counting the number of dots in $\cal F$ which are not contained in the first $\delta-1$ rows).

If $n< m-\delta+2$, then $n-y<m-\delta+2-y$. Take a proper combination ${\cal F}_{123}$ of $\mathcal{F}_1 $, $ \mathcal{F}_2 $ and $ \mathcal{F}_3$ as follows
\begin{center}
  $\mathcal F_{123}=\left(
                       \begin{array}{c}
                         \mathcal F_3^t \\
                         \mathcal F_{1} \\
                         \mathcal F_2^t \\
                       \end{array}
                     \right)$.
\end{center}
${\cal F}_{123}$ is an $(\delta+1)\times(m-y-\delta+2)$ Ferrers diagram. By assumption, $n-y\geq \delta+1$, so $m-y-\delta+2>\delta+1$. Thus by Theorem \ref{thm:shortening}, there exists an $[\mathcal F_{123}, n-y+z_0, \delta]_q$ code. Then apply Construction \ref{con:com-2} to obtain an $[\mathcal{F}, n-1+(y-\delta)(\delta-2)+|\mathcal P|, \delta]_q$ code, which is optimal by Lemma \ref{lem:upper bound} (one can check it by counting the number of dots in $\cal F$ which are not contained in the first $\delta-2$ rows and the rightmost column). \qed
\end{proof}

We remark that Theorem \ref{thm:com-1} with $n=10$, $y=5$, $m=12$, $\delta=4$, $k=13$ and $\mathcal P= {{\bullet}\ \ {\bullet}}\ $ yields Example \ref{eg:com-2}.

\subsection{A special case: $\mathcal F_2$ having only one dot}

Constructions \ref{con:com-1} and \ref{con:com-2} require that $\mathcal F_2$ doesn't contain any dots of $\mathcal F$ in the first $n_1$ columns and the last $m_3$ rows. However, when $\mathcal F_2$ contains only one dot, this restriction can be relaxed.

\begin{construction} \label{con:com-3}
Let $m=m_1+m_3$ and $n=n_1+n_3$. Let
\[\mathcal F=
\begin{array}{c@{\hspace{-5pt}}c@{\hspace{-5pt}}c}
&\begin{array}{cc}
\overbrace{\rule{15mm}{0mm}}^{n_1}&
\overbrace{\rule{15mm}{0mm}}^{n_3}
\end{array}
\\
&
\begin{array}{cccccc}
     \bullet & \cdots & \bullet & \bullet & \cdots & \bullet \\
     \vdots & \mathcal F_1 & \vdots & \vdots & \mathcal F_4 & \vdots \\
     \circ & \cdots & \bullet & \bullet & \cdots & \bullet \\
      &  & {\color{green}{\bullet}} & \bullet & \cdots & \bullet \\
      &  &  & \vdots & \mathcal F_3 & \vdots \\
      &  &  & \circ & \cdots & \bullet
   \end{array}
& \begin{array}{l}
\left.\rule{0mm}{8mm}\right\}m_1\vspace{0.2cm}\\
\left.\rule{0mm}{8mm}\right\}m_3
\end{array}
\end{array}
\]
be an $m \times n$ Ferrers diagram, where $\mathcal F_1$ is an $m_1 \times n_1$ Ferrers diagram, $\mathcal F_2=\begin{array}{c}{\color{green}{\bullet}}\end{array}$, $\mathcal F_3$ is an $m_3 \times n_3$ Ferrers diagram, and $\mathcal F_4$ is an $m_1 \times n_3$ full Ferrers diagram. Sort the list $\{1\}\cup \{\rho_i(\mathcal F_1):i \in [m_1]\}\cup\{\gamma_j(\mathcal F_3):j \in [n_3]\}$ from small to large, where $\rho_i(\mathcal F_1)$ denotes the number of dots in the $i$-th row of ${\cal F}_1$ and $\gamma_j(\mathcal F_3)$ denotes the number of dots in the $j$-th column of ${\cal F}_3$. The elements in the sorted list are rewritten as $\alpha_0\leq \alpha_1 \leq \ldots \leq \alpha_{m_1+n_3}$.
Suppose that
$\mathcal F_{123}$ is a proper combination of $\mathcal F_1$, $\mathcal F_2$ and $\mathcal F_3$ satisfying $\gamma_l(\mathcal F_{123})=\alpha_{l}$ for $l\in [m_1+n_3+1]$, where $\gamma_l(\mathcal F_{123})$ denotes the number of dots in the $l$-th column of ${\cal F}_{123}$, and ${\cal C}_{123}$ is an $[\mathcal F_{123}, k_1, \delta_1]_q$ code. If there exists an $[\mathcal F_4, k_4, \delta_4]_q$ code ${\cal C}_4$, then there exists an $[\mathcal F, k_1+k_4, \delta]_q$ code $\cal C$, where $\delta=\min\{\delta_1, \delta_4\}$.
\end{construction}

\begin{proof}
Take a natural bijection $\psi_1:\mathcal F|_{\mathcal F_1,\mathcal F_2,\mathcal F_3}\longrightarrow \mathcal F_{123}$ such that $\psi_1(m_1,n_1-1)=(0,0)$, $\psi_1(i,n_1-1)=(0,*)$ for each $i\in[m_1]$, and $\psi_1(m_1,j)=(0,*)$ for each $n_1\leq j\leq n_1+n_3-1$. Take a natural bijection $\psi_2:\mathcal F|_{\mathcal F_4}\longrightarrow \mathcal F_4$. For each $\boldsymbol{B}\in \mathcal C_{123}$ and $\boldsymbol{D}\in \mathcal C_{4}$, construct an $m \times n$ matrix $\boldsymbol{C_{B,D}}$  such that
\begin{equation}
\label{}\nonumber
\boldsymbol{C_{B,D}}(i,j)=\left \{
\begin {aligned}
&\boldsymbol{B}({\psi_1(i,j)}),~~{\rm if}~(i,j) \in \mathcal F|_{\mathcal F_1,\mathcal F_2,\mathcal F_3};\\
&\boldsymbol{D}({\psi_2(i,j)}),~~{\rm if}~(i,j) \in \mathcal F|_{\mathcal F_4};\\
&0,~~{\rm if}~(i,j) \notin  \mathcal F.
\end {aligned}
\right.
\end{equation}
Let $\mathcal C=\{\boldsymbol{C_{B,D}}:\boldsymbol{B}\in {\cal C}_{123},\boldsymbol{D}\in \mathcal C_4\}$. Then $\mathcal C$ is an $[\mathcal F, k_1+k_4, \delta]_q$ code $\mathcal C$, where $\delta=\min\{\delta_1, \delta_4\}$.

One can easily verify the linearity and the dimension of the code. It suffices to examine the minimum rank weight of any nonzero codewords $\boldsymbol{C_{B,D}}$ from $\mathcal C$. We give a sketch of the counting for ranks below. The technique is similar to that in Example \ref{eg:com-2}.

Let
\begin{center}
$\boldsymbol{C_{B,D}}=\left(
         \begin{array}{ccc;{2pt/2pt}c;{2pt/2pt}ccc}
           & &  &  * &   &     \\
        & \boldsymbol{A_{1}} &     & \vdots &  &\boldsymbol{A_{4}} &   \\
        & &    & * &  &    \\ \hdashline[2pt/2pt]
        0 & \cdots & 0 & a  & * & \cdots & *  \\ \hdashline[2pt/2pt]
         & &    & 0  &   &  &   \\
         &   &    & \vdots  &   & \boldsymbol{A_{3}}  & \\
          & &   & 0  &   &   & \\
         \end{array}
       \right)$,
\end{center}
where $a$ corresponds to the dot in ${\cal F}_2$. If $\boldsymbol{A_{4}}\neq \boldsymbol{O}$, then since ${\cal C}_4$ is an $[\mathcal F_4, k_4, \delta_4]_q$ code, rank$(\boldsymbol{C_{B,D}})\geq$ rank$(\boldsymbol{A_{4}})={\rm rank}(\boldsymbol{D})\geq \delta_4$. If $\boldsymbol{A_{4}}= \boldsymbol{O}$ and $a=0$, then since ${\cal C}_{123}$ is an $[\mathcal F_{123}, k_1, \delta_1]_q$ code, rank$(\boldsymbol{C_{B,D}})=$ rank$(\boldsymbol{C_{B,D}}|_{\mathcal F_1})+ {\rm rank}(\boldsymbol{C_{B,D}}|_{\mathcal F_3})\geq {\rm rank}(\textbf{B})\geq \delta_1$. If $\boldsymbol{A_{4}}= \boldsymbol{O}$ and $a\neq 0$, then rank$(\boldsymbol{C_{B,D}})\geq {\rm rank}(\boldsymbol{A_{1}})+1+{\rm rank}(\boldsymbol{A_{3}})$. According to $\psi_1$, $\textbf{B}$ is of the form (a permutation of columns are allowed)
\begin{center}
$\left(
\begin{array}{c;{2pt/2pt}ccc;{2pt/2pt}ccc}
a  & * & \cdots & * & * & \cdots & * \\
   & & \boldsymbol{A_{1}^T} & & & \boldsymbol{A_{3}} \\
\end{array}
\right)$.
\end{center}
Since ${\rm rank}(\boldsymbol{A_{1}})+1+{\rm rank}(\boldsymbol{A_{3}})\geq {\rm rank}(\boldsymbol{B})$, we have rank$(\boldsymbol{C_{B,D}})\geq \delta_1$. \qed
\end{proof}

\begin{theorem} \label{thm:com-3}
Take $\delta_1=\delta_4=\delta$ in Construction $\ref{con:com-3}$ such that $\delta \leq m_1+1$. Suppose that $\mathcal{F}$ in Construction $\ref{con:com-3}$ satisfies:
\begin{itemize}
\item[$(1)$] if $\delta<m_1+1$, then $n_3 \geq m_1;$
\item[$(2)$] $1+m_1+n_3\leq \max\{n_1,m_3\};$
\item[$(3)$] $\alpha_{m_1+n_3-\delta+2}\geq m_1+n_3;$
\item[$(4)$] $\rho_{\delta-2}-n_3\geq m_3$,
\end{itemize}
where $\rho_i$ denotes the number of dots in the $i$-th row of ${\cal F}$, $i\in [m_1+m_3]$. Then there exists an optimal $[\mathcal F, \sum_{i=\delta-1}^{m_1+m_3-1}\rho_i, \delta]_q$ code $\mathcal C$ for any prime power $q$.
\end{theorem}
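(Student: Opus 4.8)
The plan is to obtain the code by a single application of Construction \ref{con:com-3}, choosing the two ingredient codes $\mathcal C_{123}$ and $\mathcal C_4$ so that their dimensions sum to the target $\sum_{i=\delta-1}^{m_1+m_3-1}\rho_i$, and then to read off optimality directly from Lemma \ref{lem:upper bound}. First I would fix the proper combination $\mathcal F_{123}$ required by Construction \ref{con:com-3}: take the $m_1$ rows of $\mathcal F_1$ (transposed into columns), the $n_3$ columns of $\mathcal F_3$, and the single dot of $\mathcal F_2$, and arrange them as the columns of one diagram sorted by height into the non-decreasing order $\alpha_0\le\cdots\le\alpha_{m_1+n_3}$. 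Top-aligned, non-decreasing column heights always form a legitimate Ferrers diagram, and I would verify that this sorting respects condition $(3)$ of a proper combination (two dots sharing a row or column of $\mathcal F_1$, resp. $\mathcal F_3$, still share a column or row of $\mathcal F_{123}$). Thus $\mathcal F_{123}$ meets the column-count requirement $\gamma_l(\mathcal F_{123})=\alpha_l$.

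Next I would supply the two component codes. The diagram $\mathcal F_{123}$ has $N:=m_1+n_3+1$ columns and $M:=\max\{n_1,m_3\}$ rows; condition $(2)$ gives $M\ge N$, while $\delta\le m_1+1\le m_1+n_3=N-1$, and condition $(3)$ together with the sorted order guarantees that each of the rightmost $\delta-1$ columns has at least $N-1$ dots. Hence Theorem \ref{thm:from subcodes} applies and yields an $[\mathcal F_{123},k_1,\delta]_q$ code with $k_1=\min\{M-N+1,\alpha_0\}+\sum_{i=1}^{N-\delta}\alpha_i$. Since $\alpha_0=1$ (the entry from $\mathcal F_2$) and $M-N+1\ge1$, this collapses to $k_1=\sum_{i=0}^{N-\delta}\alpha_i$, the sum of the $N-\delta+1$ smallest entries of the list. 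For the full diagram $\mathcal F_4$ of size $m_1\times n_3$, condition $(1)$ forces $n_3\ge m_1$ whenever $\delta\le m_1$, so an MRD code on the transpose gives an $[\mathcal F_4,k_4,\delta]_q$ code with $k_4=n_3(m_1-\delta+1)$; when $\delta=m_1+1$ this value is $0$ and I would take the trivial code. Construction \ref{con:com-3} then produces an $[\mathcal F,k_1+k_4,\delta]_q$ code, since $\min\{\delta_1,\delta_4\}=\delta$.

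The crux is the dimension bookkeeping. Writing $k_1$ as the total list sum $1+|\mathcal F_1|+|\mathcal F_3|$ minus the sum of the largest $\delta-1$ entries, condition $(4)$ becomes decisive: row $\delta-2$ lies in the top block, so $\rho_{\delta-2}(\mathcal F)=\rho_{\delta-2}(\mathcal F_1)+n_3$ and $(4)$ reads $\rho_{\delta-2}(\mathcal F_1)\ge m_3$. Thus the $(\delta-1)$-th largest row-count of $\mathcal F_1$ dominates every column-count $\gamma_j(\mathcal F_3)\le m_3$ as well as the entry $1$, so the largest $\delta-1$ entries of the list are exactly $\rho_0(\mathcal F_1),\ldots,\rho_{\delta-2}(\mathcal F_1)$ (ties at the threshold $m_3$ leave this sum unchanged), giving $k_1=1+|\mathcal F_3|+\sum_{i=\delta-1}^{m_1-1}\rho_i(\mathcal F_1)$. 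A direct row-by-row count of $\mathcal F$ then gives $\sum_{i=\delta-1}^{m_1+m_3-1}\rho_i=\sum_{i=\delta-1}^{m_1-1}\rho_i(\mathcal F_1)+(m_1-\delta+1)n_3+1+|\mathcal F_3|$, so adding $k_4=n_3(m_1-\delta+1)$ yields precisely $k_1+k_4=\sum_{i=\delta-1}^{m_1+m_3-1}\rho_i$.

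Finally, optimality follows with no separate monotonicity argument. The quantity $\sum_{i=\delta-1}^{m_1+m_3-1}\rho_i$ is exactly $v_{\delta-1}$, the number of dots of $\mathcal F$ outside the first $\delta-1$ rows, in Lemma \ref{lem:upper bound}. Since the constructed code has dimension $k_1+k_4=v_{\delta-1}$ and Lemma \ref{lem:upper bound} forces $k_1+k_4\le\min_{i\in[\delta]}v_i\le v_{\delta-1}$, the inequalities collapse, the bound is attained, and the code is optimal. The main obstacle I anticipate is precisely the bookkeeping of the third paragraph: matching the column-sum dimension $k_1$ of the sorted/transposed object $\mathcal F_{123}$ against the row sums $\rho_i$ of the original $\mathcal F$, and checking that condition $(4)$ is exactly what forces the largest $\delta-1$ list entries to come entirely from $\mathcal F_1$.
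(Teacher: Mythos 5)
Your proposal is correct and follows essentially the same route as the paper: Theorem \ref{thm:from subcodes} on the sorted diagram $\mathcal F_{123}$ (conditions (2),(3) giving $k_1=\sum_{i=0}^{m_1+n_3-\delta+1}\alpha_i$), an optimal code of dimension $n_3(m_1-\delta+1)$ on the full block $\mathcal F_4$ via condition (1), condition (4) to identify the largest $\delta-1$ list entries with the top rows of $\mathcal F_1$, and optimality read off from $v_{\delta-1}$ in Lemma \ref{lem:upper bound}. Your bookkeeping in the third paragraph is just a more explicit version of the paper's one-line claim that condition (4) ``ensures all dots in $\mathcal F_3$ contribute dimensions.''
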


\begin{proof}
By Theorem \ref{thm:shortening}, due to Condition $(1)$, there is an optimal $[\mathcal F_4,n_3(m_1-\delta+1),\delta]_q$ code ${\cal C}_4$ for any prime power $q$. Note that when $\delta=m_1+1$, it consists of only a zero codeword.

Note that $\mathcal F_{123}$ has $m_1+n_3+1$ columns. By Theorem \ref{thm:from subcodes}, due to Conditions (2) and (3), there is an $[\mathcal F_{123}, \sum_{i=0}^{m_1+n_3-\delta+1} \alpha_i, \delta]_q$ code ${\cal C}_{123}$ for any prime power $q$, where $\alpha_i$ denotes the number of dots in the $i$-th column of $\mathcal F_{123}$. It is optimal by Lemma \ref{lem:upper bound}. Condition (4) ensures all dots in ${\cal F}_3$ contribute dimensions for ${\cal C}_{123}$, so $\sum_{i=0}^{m_1+n_3-\delta+1} \alpha_i=\sum_{i=\delta-1}^{m_1-1}(\rho_i-n_3)+\sum_{i=m_1}^{m_1+m_3-1}\rho_i$.

Therefore, we can apply Construction \ref{con:com-3} to obtain an optimal $[\mathcal F, k, \delta]_q$ code, where $k=n_3(m_1-\delta+1)+\sum_{i=\delta-1}^{m_1-1}(\rho_i-n_3)+\sum_{i=m_1}^{m_1+m_3-1}\rho_i=\sum_{i=\delta-1}^{m_1+m_3-1}\rho_i$. \qed
\end{proof}

\begin{example}\label{eg:com-3}
Consider the following Ferrers diagram:
\begin{center}
$\mathcal F=\begin{array}{ccccc}
   {\color{yellow}{\bullet}}  & {\color{yellow}{\bullet}}  & {\color{yellow}{\bullet}}  & {\color{yellow}{\bullet}}  & \bullet \\
   {\color{yellow}{\bullet}}  & {\color{yellow}{\bullet}}  & {\color{yellow}{\bullet}}  & {\color{yellow}{\bullet}}  & \bullet \\
    &  &  & {\color{red}{\bullet}}  & {\color{green}{\bullet}} \\
    &  &  &  & {\color{green}{\bullet}} \\
    &  &  &  & {\color{green}{\bullet}}\\
    &  &  &  & {\color{green}{\bullet}}
 \end{array}
$.
\end{center}
Let $\delta=3$ and
\begin{center}
$\mathcal F_1=\begin{array}{cccc}
   {\color{yellow}{\bullet}}  & {\color{yellow}{\bullet}}  & {\color{yellow}{\bullet}}  & {\color{yellow}{\bullet}} \\
   {\color{yellow}{\bullet}}  & {\color{yellow}{\bullet}}  & {\color{yellow}{\bullet}}  & {\color{yellow}{\bullet}}
             \end{array},~~
 \mathcal F_2=\begin{array}{c}
                {\color{red}{\bullet}}
              \end{array},~~
  \mathcal F_3=\begin{array}{c}
                 {\color{green}{\bullet}} \\
                 {\color{green}{\bullet}} \\
                 {\color{green}{\bullet}} \\
                 {\color{green}{\bullet}}
               \end{array},~~
                  \mathcal F_4=\begin{array}{c}
                \bullet \\
                \bullet
              \end{array}.$
\end{center}
Then $m_1=2$, $n_1=4$, $m_3=4$, $n_3=1$, $\alpha_0=1$ and $\alpha_i=4$ for $i\in \{1,2,3\}$. So the conditions in  Theorem $\ref{thm:com-3}$ are satisfied, and we can construct an optimal $[\mathcal F, 5,3]$ code.
\end{example}

\begin{remark}
No known construction can be applied to obtain Example $\ref{eg:com-3}$. By Remark $\ref{rek:subcodes}$, all theorems in Section $2.3.1$ are invalid. Theorem $\ref{thm:from MDS}$ provides an $[\mathcal F,4,3]_q$ code for any prime power $q\geq 3$, but it is not optimal. Theorem $\ref{thm:combine with same dim}$ is also invalid. Otherwise, to get the required dimension $4$, $\mathcal F_2$ must be the $4\times 2$ Ferrers diagram in the lower right corner of $\cal F$, and $\mathcal F_1$ must be the $2\times 3$ Ferrers diagram in the top left corner of $\cal F$. Then $\delta_1=2$ and $\delta_2=1$ because of the dimension $4$, which contradicts with $\delta=4$.
\end{remark}

\section{Concluding remarks}

Main contributions of this paper lie in the following two aspects. One is to generalize Construction 2 in \cite{egrw} by exploring subcodes of Gabidulin codes. Construction 2 in \cite{egrw} requires that each of the rightmost $\delta-1$ columns in Ferrers diagram $\cal F$ has at least $n-1$ dots. We relax the condition $n-1$ to $n-r$ (see Theorem \ref{thm:subcodes from Gab}). The other is to generalize Theorem 9 in \cite{egrw} by introducing the concept of proper combinations of Ferrers diagrams (see Constructions \ref{con:com-1}, \ref{con:com-2} and \ref{con:com-3}). This is the first time constructions for FDRM codes with large size based on small ones are investigated systematically since they are introduced in \cite{egrw}.

Recently, a new family of MRD codes is presented in \cite{Sheekey}. A natural question is how to use it to construct new optimal FDRM codes.

Another question is whether it is possible in some circumstances to require that $\mathcal F_1$ and $\mathcal F_2$ in Construction \ref{con:com-1} or $\mathcal F_1$, $\mathcal F_2$ and $\mathcal F_3$ in Construction \ref{con:com-2} are not Ferrers diagrams.

\appendix
\section{Appendix}\label{app-A}

\textbf{Proof of Lemma \ref{lem:sys MRD}}~~Let
\begin{center}
$
 \textbf{B}=\left(
             \begin{array}{llll}
               1 & u_{0,1} & \cdots & u_{0,n-1} \\
                & 1 & \cdots & u_{1,n-1} \\
                &  & \ddots & \vdots \\
                &  &  & 1 \\
             \end{array}
           \right)$.
\end{center}
Then
\begin{center}
{\scriptsize $\textbf{GB}=$}\begin{scriptsize}$\left(
\begin{array}{ccccccc}
  1 & u_{0,1} & \ldots & u_{0,k-1} & u_{0,k}+a_{1,k}\beta^{k} & \cdots & u_{0,n-1}+\sum_{i=k}^{n-2} u_{i,n-1}a_{1,i}\beta^i+a_{1, n-1}\beta^n \\
    & 1 & \ldots & u_{1,k-1} & u_{1,k}+a_{2,k}\beta^{k-1} & \ldots & u_{1,n-1}+\sum_{i=k}^{n-2} u_{i,n-1}a_{2,i}\beta^{i-1}+a_{2,n-1}\beta^{n-2} \\
   &  & \ddots & \vdots & \vdots & \ddots & \vdots \\
    &  &  & 1 & u_{k-1,k}+a_{k,k}\beta & \cdots & u_{k-1,n-1}+\sum_{i=k}^{n-2} u_{i,n-1}a_{k,i}\beta^{i-k+1}+a_{k,n-1}\beta^{n-k} \\
  \end{array}
\right).
 $\end{scriptsize}
\end{center}
Let $\textbf{D}_k$ be any $k \times k$ submatrix of $\textbf{GB}$. Then $\det(\textbf{D}_k)$ is a polynomial on $\beta$.

\textbf{Case $1.$} $\textbf{D}_k$ doesn't contain the last column of $\textbf{GB}$. If we could prove that the degree of $\det(\textbf{D}_k)$ is less than $m$, and the leading coefficient of $\det(\textbf{D}_k)$ is a minor of $\textbf{A}_1$, then since every minor of $\textbf{A}_1$ is nonzero, we would have $\det(\textbf{D}_k)\neq 0$.

\textbf{Subcase $1.1.$} $\textbf{D}_k$ doesn't contain any of the first $k$ columns of $\textbf{GB}$. Take
\begin{center}
$\textbf{M}_1=\left(
            \begin{array}{cccc}
              a_{1,i_1}\beta^{i_1} & a_{1,i_2}\beta^{i_2} & \cdots & a_{1,i_k}\beta^{i_k} \\
              a_{2,i_1}\beta^{i_1-1} & a_{1,i_2}\beta^{i_2-1} & \cdots & a_{2,i_k}\beta^{i_k-1} \\
              \vdots & \vdots & \ddots & \vdots \\
              a_{k,i_1}\beta^{i_1-k+1} & a_{k,i_2}\beta^{i_2-k+1} & \cdots & a_{k,i_k}\beta^{i_k-k+1} \\
            \end{array}
          \right)$,
 \end{center}
where $\{i_1,i_2,\ldots,i_k\}\subseteq\{k,k+1,\ldots,n-2\}$, such that the degree of $\det(\textbf{M}_1)$ is the same as that of $\det(\textbf{D}_k)$, and their leading coefficients are the same. Then
\begin{center}\small
${\rm det}(\textbf{M}_1)=\det\left(
            \begin{array}{cccc}
              a_{1,i_1}\beta^{k-1} & a_{1,i_2}\beta^{k-1} & \cdots & a_{1,i_k}\beta^{k-1} \\
              a_{2,i_1}\beta^{k-2} & a_{2,i_2}\beta^{k-2} & \cdots & a_{2,i_k}\beta^{k-2} \\
              \vdots & \vdots & \ddots & \vdots \\
              a_{k,i_1} & a_{k,i_2} & \cdots & a_{k,i_k} \\
            \end{array}
          \right)\cdot(\beta^{i_1-k+1}\beta^{i_2-k+1}\cdots\beta^{i_k-k+1})
          =\det\left(
            \begin{array}{cccc}
              a_{1,i_1} & a_{1,i_2} & \cdots & a_{1,i_k} \\
              a_{2,i_1} & a_{2,i_2} & \cdots & a_{2,i_k} \\
              \vdots & \vdots & \ddots & \vdots \\
              a_{k,i_1} & a_{k,i_2} & \cdots & a_{k,i_k} \\
            \end{array}
          \right)\cdot(\beta^{k-1}\beta^{k-2}\cdots\beta)\cdot(\beta^{i_1-k+1}\beta^{i_2-k+1}\cdots\beta^{i_k-k+1})
 $,
\end{center}
whose degree is $k(k-1)/2+\sum_{j=1}^k (i_j-k+1)\leq kn-k^2-k<m$. Since every $k$-minor of $\textbf{A}_1$ is nonzero, the leading coefficient of $\det(\textbf{M}_1)$ is nonzero. So $\det(\textbf{D}_k)\neq 0$.

\textbf{Subcase $1.2.$} $\textbf{D}_k$ contains $h$ columns coming from the first $k$ columns of $\textbf{GB}$ for some $1\leq h\leq k$. Write these $h$ columns as the $j_1$-th, $j_2$-th, $\ldots$, $j_h$-th columns. Let $\textbf{U}_{k\times h}$ be the submatrix formed by the first $h$ columns of $\textbf{D}_k$. Take
\begin{center}
 $\textbf{M}_2=\left(
                          \begin{array}{c;{2pt/2pt}cccc}
                             & a_{1,i_{h+1}}\beta^{i_{h+1}} & a_{1,i_{h+2}}\beta^{i_{h+2}} & \cdots & a_{1,i_{k}}\beta^{i_{k}} \\
                            \textbf{U}_{k\times h} & a_{2,i_{h+1}}\beta^{i_{h+1}-1} & a_{2,i_{h+2}}\beta^{i_{h+2}-1} & \cdots & a_{2,i_{k}}\beta^{i_{k}-1} \\
                             & \vdots & \vdots & \ddots & \vdots \\
                             & a_{k,i_{h+1}}\beta^{i_{h+1}-k+1} & a_{k,i_{h+2}}\beta^{i_{h+2}-k+1} & \cdots & a_{k,i_{k}}\beta^{i_{k}-k+1} \\
                          \end{array}
                        \right)$,
\end{center}
where $\{i_{h+1},i_{h+2},\ldots,i_k\}\subseteq\{k,k+1,\ldots,n-2\}$, such that the degree of $\det(\textbf{M}_2)$ is the same as that of $\det(\textbf{D}_k)$, and their leading coefficients are the same. Then
\begin{center}\tiny
$\det(\textbf{M}_2)=\det\left(
                          \begin{array}{c;{2pt/2pt}cccc}
                             & a_{1,i_{h+1}}\beta^{k-1} & a_{1,i_{h+2}}\beta^{k-1} & \cdots & a_{1,i_{k}}\beta^{k-1} \\
                            \textbf{U}_{k\times h} & a_{2,i_{h+1}}\beta^{k-2} & a_{2,i_{h+2}}\beta^{k-2} & \cdots & a_{2,i_{k}}\beta^{k-2} \\
                             & \vdots & \vdots & \ddots & \vdots \\
                             & a_{k,i_{h+1}} & a_{k,i_{h+2}} & \cdots & a_{k,i_{k}} \\
                          \end{array}
                        \right)\cdot(\beta^{i_{h+1}-k+1}\beta^{i_{h+2}-k+1}\cdots\beta^{i_{k}-k+1}).$
\end{center}
Clearly, compared with the degree of $\det(\textbf{M}_1)$, the degree of $\det(\textbf{M}_2)$ is less than $m$. Let $\textbf{L}$ be a $(k-h)\times (k-h)$ matrix obtained by removing the $j_1$-th, $j_2$-th, $\ldots$, $j_h$-th rows from the following matrix
\begin{center}
$\left(
\begin{array}{ccc}
              a_{1,i_{h+1}} & \cdots & a_{1,i_{k}} \\
              \vdots & \ddots & \vdots \\
              a_{k,i_{h+1}} & \cdots & a_{k,i_{k}}
            \end{array}
            \right)$.
\end{center}
It is readily checked that the leading coefficient of $\det(\textbf{M}_2)$ is $\det(\textbf{L})$ or $-\det(\textbf{L})$ (this fact comes from two observations: (1) via elementary row-addition operations on $\det(\textbf{M}_2)$, the $\textbf{U}_{k\times h}$ part in $\textbf{M}_2$, which is an upper triangular matrix, can be transformed to a matrix with at most one 1 in each row; (2) $\beta$ has higher degrees in upper rows of $\textbf{M}_2$). Since $\textbf{L}$ is a minor of $\textbf{A}_1,$ $\det(\textbf{L})\neq 0$. So $\det(\textbf{D}_k)\neq 0$.

\textbf{Case $2.$} $\textbf{D}_k$ contains the last column of $\textbf{GB}$. The arguments are similar to those in Case 1.

\textbf{Subcase $2.1.$} $\textbf{D}_k$ doesn't contain any of the first $k$ columns of $\textbf{GB}$. Take
\begin{center}
$\textbf{M}_3=\left(
            \begin{array}{ccccc}
              a_{1,i_1}\beta^{i_1} & a_{1,i_2}\beta^{i_2} & \cdots & a_{1,i_{k-1}}\beta^{i_{k-1}}& a_{1,n-1}\beta^{n} \\
              a_{2,i_1}\beta^{i_1-1} & a_{1,i_2}\beta^{i_2-1} & \cdots & a_{2,i_{k-1}}\beta^{i_{k-1}-1}& a_{2,n-1}\beta^{n-2} \\
              \vdots & \vdots & \ddots & \vdots& \vdots \\
              a_{k,i_1}\beta^{i_1-k+1} & a_{k,i_2}\beta^{i_2-k+1} & \cdots & a_{k,i_{k-1}}\beta^{i_{k-1}-k+1}& a_{k,n-1}\beta^{n-k} \\
            \end{array}
          \right)$,
 \end{center}
where $\{i_{1},i_{2},\ldots,i_{k-1}\}\subseteq\{k,k+1,\ldots,n-2\}$, such that the degree of $\det(\textbf{M}_3)$ is the same as that of $\det(\textbf{D}_k)$, and their leading coefficients are the same. Then
\begin{center}\scriptsize
${\det}(\textbf{M}_3)=\det \left(
            \begin{array}{ccccc}
              a_{1,i_1}\beta^{k-1} & a_{1,i_2}\beta^{k-1} & \cdots & a_{1,i_{k-1}}\beta^{k-1}& a_{1,n-1}\beta^{k} \\
              a_{2,i_1}\beta^{k-2} & a_{2,i_2}\beta^{k-2} & \cdots & a_{2,i_{k-1}}\beta^{k-2}& a_{2,n-1}\beta^{k-2} \\
              \vdots & \vdots & \ddots & \vdots& \vdots \\
              a_{k,i_1} & a_{k,i_2} & \cdots & a_{k,i_{k-1}} & a_{k,n-1} \\
            \end{array}
          \right)\cdot(\beta^{i_1-k+1}\cdots\beta^{i_{k-1}-k+1}\beta^{n-k})$
          $=\det\left(
            \begin{array}{ccccc}
              a_{1,i_1} & a_{1,i_2} & \cdots & a_{1,i_{k-1}}& a_{1,n-1}\beta \\
              a_{2,i_1} & a_{2,i_2} & \cdots & a_{2,i_{k-1}}& a_{2,n-1} \\
              \vdots & \vdots & \ddots & \vdots \\
              a_{k,i_1} & a_{k,i_2} & \cdots & a_{k,i_{k-1}}& a_{k,n-1} \\
            \end{array}
          \right)\cdot(\beta^{k-1}\beta^{k-2}\cdots\beta)\cdot(\beta^{i_1-k+1}\cdots\beta^{i_{k-1}-k+1}\beta^{n-k})
 $,
\end{center}
whose degree is $1+k(k-1)/2+\sum_{j=1}^{k-1} (i_j-k+1)+n-k\leq kn-k^2+1<m$. Since $a_{1,n-1}\in \mathbb F^*_q$ and every $(k-1)$-minor of $\textbf{A}_1$ is nonzero, the leading coefficient of $\det(\textbf{M}_3)$ is nonzero. So $\det(\textbf{D}_k)\neq 0$.

\textbf{Subcase $2.2.$} $\textbf{D}_k$ contains $h$ columns coming from the first $k$ columns of $\textbf{GB}$ for some $1\leq h\leq k$. Write these $h$ columns as the $j_1$-th, $j_2$-th, $\ldots$, $j_h$-th columns. Let $\textbf{U}_{k\times h}$ be the submatrix formed by the first $h$ column of $\textbf{D}_k$. Take
\begin{center}
 $\textbf{M}_4=\left(
                          \begin{array}{c;{2pt/2pt}ccccc}
                             & a_{1,i_{h+1}}\beta^{i_{h+1}}  & \cdots & a_{1,i_{k-1}}\beta^{i_{k-1}} & a_{1,n-1}\beta^{n} \\
                            \textbf{U}_{k\times h} & a_{2,i_{h+1}}\beta^{i_{h+1}-1} & \cdots & a_{2,i_{k-1}}\beta^{i_{k-1}-1} & a_{2,n-1}\beta^{n-2} \\
                             & \vdots &  \ddots & \vdots \\
                             & a_{k,i_{h+1}}\beta^{i_{h+1}-k+1} &  \cdots & a_{k,i_{k-1}}\beta^{i_{k-1}-k+1} & a_{k,n-1}\beta^{n-k} \\
                          \end{array}
                        \right)$,
\end{center}
where $\{i_{h+1},i_{h+2},\ldots,i_{k-1}\}\subseteq\{k,k+1,\ldots,n-2\}$, such that the degree of $\det(\textbf{M}_4)$ is the same as that of $\det(\textbf{D}_k)$, and their leading coefficients are the same. Then
\begin{center}\scriptsize
$\det(\textbf{M}_4)=\det\left(
                          \begin{array}{c;{2pt/2pt}cccc}
                             & a_{1,i_{h+1}}\beta^{k-1} &  \cdots & a_{1,i_{k-1}}\beta^{k-1}& a_{1,n-1}\beta^{k} \\
                            \textbf{U}_{k\times h} & a_{2,i_{h+1}}\beta^{k-2} & \cdots & a_{2,i_{k-1}}\beta^{k-2} & a_{2,n-1}\beta^{k-2} \\
                             & \vdots & \ddots & \vdots& \vdots \\
                             & a_{k,i_{h+1}} & \cdots & a_{k,i_{k-1}}& a_{k,n-1} \\
                          \end{array}
                        \right)\cdot (\beta^{i_{h+1}-k+1}\cdots\beta^{i_{k-1}-k+1}\beta^{n-k})$,
\end{center}
Clearly, compared with the degree of $\det(\textbf{M}_3)$, the degree of $\det(\textbf{M}_4)$ is less than $m$.

\textbf{Subcase $2.2.1.$} $\textbf{D}_k$ contains the first column of $\textbf{GB}$. W.l.o.g., assume that the $j_1$-th column of $\textbf{GB}$ is just its first column. Let $\textbf{L}$ be a $(k-h)\times (k-h)$ matrix obtained by removing the $j_1$-th, $j_2$-th, $\ldots$, $j_h$-th rows from the following matrix
\begin{center}
$\left(
\begin{array}{cccc}
              a_{1,i_{h+1}} & \cdots & a_{1,i_{k-1}} & a_{1,n-1} \\
              \vdots & \ddots & \vdots & \vdots \\
              a_{k,i_{h+1}} & \cdots & a_{k,i_{k-1}} & a_{k,n-1}
            \end{array}
            \right)$.
\end{center}
It is readily checked that the leading coefficient of $\det(\textbf{M}_4)$ is $\det(\textbf{L})$ or $-\det(\textbf{L})$. Since $\textbf{L}$ is a minor of $\textbf{A}_2,$ $\det(\textbf{L})\neq 0$. So $\det(\textbf{D}_k)\neq 0$.

\textbf{Subcase $2.2.2.$} $\textbf{D}_k$ does not contain the first column of $\textbf{GB}$. Let $\textbf{L}$ be a $(k-h-1)\times (k-h-1)$ matrix obtained by removing the first, the $j_1$-th, $j_2$-th, $\ldots$, $j_h$-th rows from the following matrix
\begin{center}
$\left(
\begin{array}{cccc}
              a_{1,i_{h+1}} & \cdots & a_{1,i_{k-1}} & a_{1,n-1} \\
              \vdots & \ddots & \vdots & \vdots \\
              a_{k,i_{h+1}} & \cdots & a_{k,i_{k-1}} & a_{k,n-1}
            \end{array}
            \right)$.
\end{center}
It is readily checked that the leading coefficient of $\det(\textbf{M}_4)$ is $a_{1,n-1}\cdot\det(\textbf{L})$ or $-a_{1,n-1}\cdot\det(\textbf{L})$. Note that $a_{1,n-1}\in \mathbb F^*_q$. Since $\textbf{L}$ is a minor of $\textbf{A}_2,$ $\det(\textbf{L})\neq 0$. So $\det(\textbf{D}_k)\neq 0$. \qed

\section{Appendix}\label{app-B}

\textbf{Proof of Lemma \ref{lem:subcode from Gab}}~~
To construct the required matrix $\textbf{G}$, we first take a $\mathcal{G}[\mu \times (\eta-r), d]_q$ code in vector representation over $\mathbb F_{q^{\mu}} $:

\begin{center}
  $\textbf G_0=\left(
                   \begin{array}{cccc}
                     1 & g_{0,1} & \cdots & g_{0,\eta-r-1} \\
                     1 & g_{0,1}^{[1]} & \cdots & g_{0,\eta-r-1}^{[1]} \\
                     \vdots & \vdots & \ddots & \vdots \\
                     1 & g^{[\kappa-1]}_{0,1} & \cdots & g_{0,\eta-r-1}^{[\kappa-1]} \\
                   \end{array}
                 \right),$
\end{center} where $1, g_{0,1}, ~\ldots, ~g_{0,\eta-r-1}\in \mathbb F_{q^{\mu}} $ are linearly independent over $\mathbb F_q$.

We shall extend $\textbf G_0$ by adding $r$ columns to obtain $\textbf{G}$. We need $r$ steps. For $0\leq i\leq r-1$, in Step $i$, let $\omega_i=\eta-r+i-2$ and $\textbf{G}_i=$
\begin{center}\setlength{\arraycolsep}{3.5pt}
\begin{tiny}
$\left(
\begin{array}{c;{2pt/2pt}ccccccccccccc}
      &      & 0 & 0 & \cdots & 0 & \alpha_{0,\kappa} & \cdots & \alpha_{0,\eta-r-1} & 0 & 0 & \cdots & 0 & 0 \\
      &      & 0 & 0 & \cdots & 0 & \alpha_{1,\kappa} & \cdots & \alpha_{1,\eta-r-1} & \alpha_{1,\eta-r} & 0 & \cdots & 0 & 0 \\
    \textbf{I}_{i}    &    & \vdots & \vdots & \ddots & \vdots & \vdots & \ddots& \vdots & \vdots & \vdots & \ddots &\vdots & \vdots\\
      &      & 0 & 0 & \cdots & 0 & \alpha_{i-2,\kappa} & \cdots & \alpha_{i-2,\eta-r-1} & \alpha_{i-2,\eta-r} & \alpha_{i-2,\eta-r+1} & \cdots & 0 & 0 \\
      &      & 0 & 0 & \cdots & 0 & \alpha_{i-1,\kappa} & \cdots & \alpha_{i-1,\eta-r-1} & \alpha_{i-1,\eta-r} & \alpha_{i-1,\eta-r+1} & \cdots & \alpha_{i-1,\omega_i} & 0 \\ \hdashline[2pt/2pt]
      &      & 1 & g_{i,i+1} & \cdots & g_{i,\kappa-1} & g_{i,\kappa} & \cdots & g_{i,\eta-r-1} & g_{i,\eta-r} & g_{i,\eta-r+1} & \cdots & g_{i,\omega_i} & g_{i,\omega_i+1} \\
      &      & 1 & g^{[1]}_{i,i+1} & \cdots & g^{[1]}_{i,\kappa-1} & g^{[1]}_{i,\kappa} & \cdots & g^{[1]}_{i,\eta-r-1} & g^{[1]}_{i,\eta-r} & g^{[1]}_{i,\eta-r+1} & \cdots & g^{[1]}_{i,\omega_i} & g^{[1]}_{i,\omega_i+1}\\
      &      & \vdots &\vdots & \ddots & \vdots & \vdots & \ddots & \vdots & \vdots & \vdots & \ddots & \vdots & \vdots \\
      &      & 1 & g^{[\kappa-i-1]}_{i,i+1} & \cdots & g^{[\kappa-i-1]}_{i,\kappa-1} & g^{[\kappa-i-1]}_{i,\kappa} & \cdots & g^{[\kappa-i-1]}_{i,\eta-r-1} & g^{[\kappa-i-1]}_{i,\eta-r} & g^{[\kappa-i-1]}_{i,\eta-r+1}  & \cdots & g^{[\kappa-i-1]}_{i,\omega_i} & g^{[\kappa-i-1]}_{i,\omega_i+1} \\
   \end{array}
 \right)$\end{tiny}
\end{center}
be a $\kappa\times (\omega_i+2)$ matrix, where $1, g_{i,i+1},\ldots,g_{i,\omega_i+1} \in \mathbb F_{q^{\mu}}$ are linearly independent over $\mathbb F_q$, and the sub-matrix of $\textbf{G}_i$ obtained by removing its first $i$ rows and the leftmost $i$ columns produces a $\mathcal G[\mu \times (\eta-r),d+i]_q$ code. When $i=0$, $\textbf{G}_i$ is just $\textbf{G}_0$ we defined in the above paragraph. Now, we show that how to obtain  $\textbf{G}_{i+1}$ from  $\textbf{G}_i$ for $0\leq i\leq r-1$.

Let $t_{i, i+1}, t_{i, i+2}, \ldots, t_{i, \kappa-1} \in \mathbb F_{q^\mu}$ such that
\begin{center}\scriptsize
$\textbf{H}_{i,1}$=$\left(
\begin{array}{c;{2pt/2pt}ccccc}
             &     &   &  \\
          \textbf{I}_{i}    &  &  & & \\ \hdashline[2pt/2pt]
              & 1 & t_{i,i+1} & \cdots & t_{i, \kappa-1} \\
                &  & 1  &  & \\
                &   &  & \ddots & \\
                &   &   & & 1\\
         \end{array}
       \right)
       \left(
         \begin{array}{c;{2pt/2pt}ccccc}
                &   &   &  \\
          \textbf{I}_{i}     &  &  & & \\ \hdashline[2pt/2pt]
            &   1 &  &  &  \\
                & -1 & 1  &  & \\
                & \vdots  &  & \ddots & \\
                &  -1 &   & & 1\\
         \end{array}
       \right)\textbf{G}_i$
\end{center}
  \begin{scriptsize}$$
= \left(\begin{array}{c;{2pt/2pt}ccccccccc}
  &    & 0 & 0 & \cdots & 0 & \alpha_{0,\kappa} & \cdots & \alpha_{0,\eta-r-1} \\
        &    & 0 & 0 & \cdots & 0 & \alpha_{1,\kappa} & \cdots & \alpha_{1,\eta-r-1} \\
    \textbf{I}_{i}    &    & \vdots & \vdots & \ddots & \vdots & \vdots & \ddots& \vdots  \\
        &    & 0 & 0 & \cdots & 0 & \alpha_{i-2,\kappa} & \cdots & \alpha_{i-2,\eta-r-1} \\
        &    & 0 & 0 & \cdots & 0 & \alpha_{i-1,\kappa} & \cdots & \alpha_{i-1,\eta-r-1} \\ \hdashline[2pt/2pt]
        &    & 1 & 0 & \cdots & 0 & \alpha_{i,\kappa} & \cdots & \alpha_{i,\eta-r-1}  \\
         &    & 0 & g^{[1]}_{i,i+1}-g_{i,i+1} & \cdots & g^{[1]}_{i,\kappa-1}-g_{i,\kappa-1} & g^{[1]}_{i,\kappa}-g_{i,\kappa} & \cdots & g^{[1]}_{i,\eta-r-1}-g_{i,\eta-r-1}\\
        &    & \vdots &\vdots & \ddots & \vdots & \vdots & \ddots & \vdots  \\
        &    & 0 & g^{[\kappa-i-1]}_{i,i+1}-g_{i,i+1} & \cdots & g^{[\kappa-i-1]}_{i,\kappa-1}-g_{i,\kappa-1} & g^{[\kappa-i-1]}_{i,\kappa}-g_{i,\kappa} & \cdots & g^{[\kappa-i-1]}_{i,\eta-r-1}-g_{i,\eta-r-1} \\
\end{array}\right.$$
$$\left.\begin{array}{cccccc}
      & 0 & 0 & \cdots & 0 & 0 \\
      & \alpha_{1,\eta-r} & 0 & \cdots & 0 & 0 \\
      & \vdots & \vdots & \ddots & \vdots & \vdots \\
      & \alpha_{i-2,\eta-r} & \alpha_{i-2,\eta-r+1} & \cdots & 0 & 0 \\
      & \alpha_{i-1,\eta-r} & \alpha_{i-1,\eta-r+1} & \cdots & \alpha_{i-1,\omega_i} & 0 \\ \hdashline[2pt/2pt]
      & \alpha_{i,\eta-r} & \alpha_{i,\eta-r+1} & \cdots & \alpha_{i,\omega_i} & \alpha_{i,\omega_i+1} \\ 
      & g^{[1]}_{i,\eta-r}-g_{i,\eta-r} & g^{[1]}_{i,\eta-r+1}-g_{i,\eta-r+1} & \cdots & g^{[1]}_{i,\omega_i}-g_{i,\omega_i} & g^{[1]}_{i,\omega_i+1}-g_{i,\omega_i+1}\\
      & \vdots & \vdots & \ddots & \vdots & \vdots \\
      & g^{[\kappa-i-1]}_{i,\eta-r}-g_{i,\eta-r} & g^{[\kappa-i-1]}_{i,\eta-r+1}-g_{i,\eta-r+1} & \cdots & g^{[\kappa-i-1]}_{i,\omega_i}-g_{i,\omega_i} & g^{[\kappa-i-1]}_{i,\omega_i+1}-g_{i,\omega_i+1} \\
   \end{array}\right).$$
\end{scriptsize}

\noindent Notice that $t_{i,i+1},\ldots,t_{i,\kappa-1}$ influence only the first row under the broken line of $\textbf{H}_{i,1}$ and the requirements on this row constitute a linear system of equations with $\kappa-1-i$ equations and $\kappa-1-i$ unknowns. Therefore, the desired $t_{i,i+1},\ldots,t_{i,\kappa-1}$ always exist (this is from the observation of the generator matrix of the Gabidulin code defined by $1, g_{i,i+1},\ldots,g_{i,\kappa-1}$).

Let
\begin{center}\scriptsize
$\textbf{H}_{i,2}$=$\left(
         \begin{array}{c;{2pt/2pt}cccc}
           \textbf{I}_{i+1} &   &   &   &   \\ \hdashline[2pt/2pt]
             & 1 &   &   &   \\
             & -1 & 1 &   &   \\
             &   & \ddots & \ddots &   \\
             &   &   & -1 & 1 \\
         \end{array}
       \right)\textbf{H}_{i,1}$
=\begin{scriptsize}\setlength{\arraycolsep}{2.5pt}$\left(
   \begin{array}{c;{2pt/2pt}cccccccccccc}
             & 0 & \cdots & 0 & \alpha_{0,\kappa} & \cdots & \alpha_{0,\eta-r-1} & 0 & 0 &\cdots & 0 & 0 \\
             & 0 & \cdots & 0 & \alpha_{1,\kappa} & \cdots & \alpha_{1,\eta-r-1} & \alpha_{1,\eta-r} &0& \cdots & 0 & 0 \\
    \textbf{I}_{i+1}       & \vdots & \ddots & \vdots & \vdots & \ddots& \vdots & \vdots & \vdots &\ddots & \vdots & \vdots \\
            & 0 & \cdots & 0 & \alpha_{i-2,\kappa} & \cdots & \alpha_{i-2,\eta-r-1} & \alpha_{i-2,\eta-r} &\alpha_{i-2,\eta-r+1}& \cdots & 0 & 0 \\
            & 0 & \cdots & 0 & \alpha_{i-1,\kappa} & \cdots & \alpha_{i-1,\eta-r-1} & \alpha_{i-1,\eta-r} &\alpha_{i-1,\eta-r+1}& \cdots & \alpha_{i-1,\omega_i} & 0 \\
            & 0 & \cdots & 0 & \alpha_{i,\kappa} & \cdots & \alpha_{i,\eta-r-1} & \alpha_{i,\eta-r} &\alpha_{i,\eta-r+1} &\cdots & \alpha_{i,\omega_i} & \alpha_{i,\omega_i+1} \\ \hdashline[2pt/2pt]
              & f_{i,i+1} & \cdots & f_{i,\kappa-1} & f_{i,\kappa} & \cdots & f_{i,\eta-r-1} & f_{i,\eta-r} & f_{i,\eta-r+1}&\cdots & f_{i,\omega_i} & f_{i,\omega_i+1}\\
              & f_{i,i+1}^{[1]} & \cdots & f_{i,\kappa-1}^{[1]} & f_{i,\kappa}^{[1]} & \cdots & f_{i,\eta-r-1}^{[1]} & f_{i,\eta-r}^{[1]} &f_{i,\eta-r+1}^{[1]}& \cdots & f_{i,\omega_i}^{[1]} & f_{i,\omega_i+1}^{[1]}\\
            &\vdots & \ddots & \vdots & \vdots & \ddots & \vdots & \vdots &\vdots& \ddots & \vdots & \vdots \\
             & f_{i,i+1}^{[\kappa-i-2]} & \cdots & f_{i,\kappa-1}^{[\kappa-i-2]} & f_{i,\kappa}^{[\kappa-i-2]} & \cdots & f_{i,\eta-r-1}^{[\kappa-i-2]} & f_{i,\eta-r}^{[\kappa-i-2]} &f_{i,\eta-r+1}^{[\kappa-i-2]}& \cdots & f_{i,\omega_i}^{[\kappa-i-2]} & f_{i,\omega_i+1}^{[\kappa-i-2]} \\
   \end{array}
 \right),$\end{scriptsize}
\end{center}where $f_{i,j}=g_{i,j}^{[1]}-g_{i,j}$ for $i+1\leq j \leq \omega_i+1$.
For any full-rank matrix $ \textbf{T}_i \in \mathbb F_{q^\mu}^{\kappa \times \kappa}$, the generator matrix $\textbf {T}_i\textbf{G}_i$ defines the same code as $\textbf {G}_i$, so $\textbf{H}_{i,2}$ defines the same code as $\textbf{G}_i$.

We can assert that $f_{i,i+1},f_{i,i+2},  \ldots, f_{i,\omega_i+1} \in \mathbb F_{q^\mu}$ are linearly independent over $\mathbb F_q$. Since $1, g_{i,i+1}, \ldots, g_{i,\omega_i+1} \in \mathbb F_{q^\mu}$ are linearly independent over $\mathbb F_q$, we construct a $\mathcal G[\mu \times (\omega_i-i+2),\omega_i-i+1]_q$ code generated by
\begin{center}
$\left(
   \begin{array}{cccc}
     1 & g_{i,i+1} & \cdots & g_{i,\omega_i+1} \\
     1 & g_{i,i+1}^{[1]} & \cdots & g_{i,\omega_i+1}^{[1]} \\
   \end{array}
 \right)
$.
\end{center}
Since $(0,f_{i,i+1},\ldots,f_{i,\omega_i+1})$ is a codeword of the $\mathcal G[\mu \times (\omega_i-i+2),\omega_i-i+1]_q$ code, then rank$(f_{i,i+1},\ldots,f_{i,\omega_i+1})=\omega_i-i+1$. So, $f_{i,i+1},f_{i,i+2}, \ldots,f_{i,\omega_i+1}$ are linearly independent over $\mathbb F_q$.

Additionally, since $\mu\geq \eta-r=\omega_i-i+2$, there exists an element $f_{i,\omega_i+2} \in \mathbb F_{q^{\mu}}$ which is $\mathbb F_q$-linearly independent of $f_{i,i+1},\ldots,f_{i,\omega_i+1}$. Hence, the $\kappa \times (\omega_i+3)$ matrix
\begin{center}
$ \textbf{H}_{i,3}=\left(
    \begin{array}{c;{2pt/2pt}cc}
       &  \textbf{0}_{(i+1)\times 1} \\
       &   f_{i,\omega_i+2} \\
      \textbf{H}_{i,2} &   f_{i,\omega_i+2}^{[1]} \\
       &   \vdots \\
       &   f_{i,\omega_i+2}^{[\kappa-i-2]} \\
    \end{array}
  \right)
$
\end{center}
defines with its right bottom $(\kappa-i-1) \times (\omega_i-i+2)$ submatrix a $\mathcal{G}[\mu \times (\omega_i-i+2), d+i+1]_q$ code.

Now we set
\begin{center}\scriptsize
  $\textbf{G}_{i+1}=\left(
                                            \begin{array}{c;{2pt/2pt}cccc}
                                             &   &   &  &  \\
                                              \textbf{I}_{i+1} &   &   &   &  \\ \hdashline[2pt/2pt]
                                                & f_{i,i+1}^{-1} &  &  &  \\
                                                &   & f_{i,i+1}^{-[1]} &  &  \\
                                                &   &   & \ddots &  \\
                                                &   &   &   & f_{i,i+1}^{-[\kappa-i-2]} \\
                                            \end{array}
                                          \right)\textbf{H}_{i,3}$
\begin{tiny}\setlength{\arraycolsep}{2pt}$=\left(
   \begin{array}{c;{2pt/2pt}cccccccccccccc}
&   0 & 0 &\cdots & 0 & \alpha_{0,\kappa} & \cdots & \alpha_{0,\eta-r-1} & 0 & 0& \cdots & 0 & 0 \\
& 0 & 0 &\cdots & 0 & \alpha_{1,\kappa} & \cdots & \alpha_{1,\eta-r-1} & \alpha_{1,\eta-r} & 0& \cdots & 0 & 0 \\
    \textbf{I}_{i+1}        & \vdots &\vdots & \ddots & \vdots & \vdots & \ddots& \vdots & \vdots & \vdots & \ddots& \vdots & \vdots \\
    & 0 &0 & \cdots & 0 & \alpha_{i-1,\kappa} & \cdots & \alpha_{i-1,\eta-r-1} & \alpha_{i-1,\eta-r} & \alpha_{i-1,\eta-r+1} & \cdots& 0 & 0 \\
    & 0 &0 & \cdots & 0 & \alpha_{i,\kappa} & \cdots & \alpha_{i,\eta-r-1} & \alpha_{i,\eta-r} &  \alpha_{i,\eta-r+1} & \cdots & \alpha_{i,\omega_{i+1}} & 0 \\ \hdashline[2pt/2pt]
 & 1 &g_{i+1,i+2} & \cdots & g_{i+1,\kappa-1} & g_{i+1\kappa} & \cdots & g_{i+1,\eta-r-1} & g_{i+1,\eta-r}& g_{i+1,\eta-r+1} &\cdots &g_{i+1,\omega_{i+1}}  & g_{i+1,\omega_{i+1}+1} \\
   & 1 &g_{i+1,i+2}^{[1]} & \cdots & g_{i+1,\kappa-1}^{[1]} & g_{i+1\kappa}^{[1]} & \cdots & g_{i+1,\eta-r-1}^{[1]} & g_{i+1,\eta-r}^{[1]} & g_{i+1,\eta-r+1}^{[1]} & \cdots&g_{i+1,\omega_{i+1}}^{[1]} & g_{i+1,\omega_{i+1}+1}^{[1]} \\
   &\vdots & \vdots &\ddots & \vdots & \vdots & \ddots & \vdots & \vdots & \vdots & \ddots &\vdots & \vdots \\
    & 1 & g_{i+1,i+2}^{[\kappa-i-2]} & \cdots & g_{i+1,\kappa-1}^{[\kappa-i-2]} & g_{i+1\kappa}^{[\kappa-i-2]} & \cdots & g_{i+1,\eta-r-1}^{[\kappa-i-2]} & g_{i+1,\eta-r}^{[\kappa-i-2]} & g_{i+1,\eta-r+1}^{[\kappa-i-2]}  &\cdots & g_{i+1,\omega_{i+1}}^{[\kappa-i-2]} & g_{i+1,\omega_{i+1}+1}^{[\kappa-i-2]} \\
   \end{array}
 \right),$
 \end{tiny}
  \end{center}
where $\omega_{i+1}=\omega_i+1$ and $g_{i+1,j}=f_{i,j}f_{i,i+1}^{-1}$ for $j\in\{i+2,\ldots,\omega_{i+1}+1\}$.
Notice that $1,g_{i+1,i+2}, \ldots, g_{i+1,\omega_{i+1}+1}$ are linearly independent over $\mathbb F_q$, and the right bottom $(\kappa-i-1) \times (\omega_{i+1}-i+1)$ submatrix of $\textbf{G}_{i+1}$ can produce the same $\mathcal{G}[\mu \times (\omega_i-i+2), d+i+1]_q$ code as the one produced by $\textbf{H}_{i,3}$.

Finally, we can choose an invertible matrix $\textbf{T} \in \mathbb F_{q^\mu}^{(\kappa-r) \times (\kappa-r)}$ such that
\begin{center}
 $\textbf{G}=\left(
                                    \begin{array}{cc}
                                     \textbf{I}_{r \times r} &  \\
                                      & \textbf{T} \\
                                    \end{array}
                                  \right) \cdot \textbf{G}_r$
\end{center}
is our required matrix. \qed

\subsection*{Acknowledgements}
The authors thank the anonymous referees for their valuable comments and suggestions that helped improve the equality of the paper.

\end{document}